\documentclass[pdflatex,sn-mathphys-num]{sn-jnl}

\usepackage{graphicx}
\usepackage{multirow}
\usepackage{booktabs}
\usepackage{rotating}
\usepackage{amsmath,amssymb,amsfonts}
\usepackage{amsthm}
\usepackage{mathrsfs}
\usepackage[title]{appendix}
\usepackage{xcolor}
\usepackage{textcomp}
\usepackage{manyfoot}
\usepackage{algorithm}
\usepackage{algorithmicx}
\usepackage{algpseudocode}
\usepackage{listings}
\usepackage{float}
\usepackage[section]{placeins}
\usepackage{subcaption}

\theoremstyle{thmstyleone}
\newtheorem{theorem}{Theorem}[section]

\newtheorem{lemma}{Lemma}[section]
\newtheorem{corollary}{Corollary}[section]

\theoremstyle{thmstylethree}
\newtheorem{example}{Example}[section]
\newtheorem{remark}{Remark}[section]
\newtheorem{definition}{Definition}[section]
\newtheorem{method}{Method}[section]
\algrenewcommand\algorithmicrequire{\textbf{Input:}}
\algrenewcommand\algorithmicensure{\textbf{Output:}}

\begin{document}

\title[Article Title]{Solving Large-Scale Sparse Linear Complementarity Problems Using a Relaxed Shifted Matrix-Splitting Modulus-Based Iteration Method}


\author*[1]{\fnm{Abhishek Kumar Singh} \sur{Sengar}}\email{abhishek47sengar@gmail.com}

\author[2]{\fnm{Bharat} \sur{Kumar}}\email{bharatnishad.kanpu@gmail.com}

\author[1]{\fnm{Deepmala}}\email{dmrai23@gmail.com}

\affil[1]{\orgdiv{Discipline of Natural Sciences }, \orgname{Indian Institute of Information Technology Design and Manufacturing}, \orgaddress{ \city{Jabalpur}, \postcode{482005}, \state{Madhya Pradesh}, \country{India}}}

\affil[2]{\orgdiv{Department of Economic Sciences}, \orgname{Indian Institute of Technology}, \orgaddress{\city{Kanpur}, \postcode{208016}, \state{Uttar Pradesh}, \country{India}}}



\abstract{
	A relaxed shifted matrix-splitting modulus-based iteration method is proposed
	for solving large-scale sparse linear complementarity problems. The method
	extends the classical modulus-based matrix-splitting framework by introducing
	two shift parameters into the matrix splitting while preserving the original
	coefficient matrix. Different choices of the splitting lead to shifted
	Jacobi, Gauss-Seidel, SOR, and AOR-type schemes. Sufficient convergence conditions are established for \(P\)-matrices and \(H_+\)-matrices. For structured sparse matrices, practical AOR-type conditions and explicit admissible intervals for the shift parameters are also
	derived. These intervals are used together with a grid-search procedure to
	select efficient parameter pairs. Numerical experiments on four large-scale
	sparse test problems, including a quasi-variational inequality model, show that
	the proposed method is competitive with several existing modulus-based and
	projected methods. A direct comparison with the two-parameter MAOR method
	further illustrates the effect of the proposed shifted splitting.
}

\keywords{Shifted splitting, Modulus-based method, Linear complementarity problems, Large sparse systems.}


\pacs[MSC Classification]{90C33, 65F10, 65F50}

\maketitle

\section{Introduction}\label{sec1}

Let $A\in\mathbb{R}^{n\times n}$ and $q\in\mathbb{R}^n$ be given. The linear
complementarity problem, denoted by $\operatorname{LCP}(q,A)$, is to determine
vectors $z,w\in\mathbb{R}^n$ such that
\begin{equation}
	w=Az+q,\qquad z\geq 0,\qquad w\geq 0,\qquad z^T w=0.
	\label{eq1}
\end{equation}
The complementarity condition \(z^T w=0\) in \eqref{eq1} implies that, for
each component, at least one of \(z_i\) and \(w_i\) must be zero. This condition
makes the problem nonlinear in nature, even though the mapping \(Az+q\) is
linear.

Linear complementarity problems arise naturally in mathematical models where
nonnegativity and complementarity conditions must be satisfied simultaneously.
They provide a unified framework for problems in optimization, equilibrium
theory, variational inequalities, free-boundary problems, and engineering
applications \cite{Murty1988,CottlePangStone2009}. In many practical
situations, the LCP is not obtained directly, but appears after discretizing a
continuous model.

One important example comes from financial mathematics. The pricing of
American options leads to a free-boundary-type problem, and after suitable
discretization it can be reformulated as a linear complementarity problem.
Fixed-point methods for such LCP formulations have been studied in
\cite{ShiYangHuang2016}. This illustrates the practical relevance of developing
efficient numerical methods for LCPs, especially when the resulting matrix is
large and sparse.

Many numerical methods have been developed for solving
\(\operatorname{LCP}(q,A)\) in \eqref{eq1}. Classical approaches include
pivotal methods, interior-point methods, and iterative methods
\cite{Lemke1965,CottleDantzig1968,KojimaMegiddoYe1992}. Pivotal methods have
a strong theoretical foundation and are useful for many finite-dimensional
complementarity problems, while interior-point methods provide another powerful
class of solvers. However, when the coefficient matrix is large and sparse,
iterative methods are often more attractive because they require less storage
and can exploit the structure of the matrix.

Among iterative methods, projected relaxation and projected splitting methods
form an important class. These methods combine relaxation or matrix-splitting
ideas with a projection onto the nonnegative orthant. Early contributions in
this direction include the works of Cryer, Mangasarian, Ahn, and Pang
\cite{Cryer1971,Mangasarian1977,Ahn1981,Pang1984}. Later, projected Jacobi,
Gauss-Seidel, SOR, and AOR-type methods were developed and analyzed under
different matrix assumptions. Further modifications and parameter-dependent
variants were also studied to improve the convergence behavior of these methods
\cite{YuanSong2003,LiDai2007,DehghanHajarian2009}.

Another widely used approach is based on the modulus transformation. The main
idea is to replace the complementarity conditions by an equivalent fixed-point
equation involving the absolute value of an auxiliary variable. This avoids the
explicit projection step and allows matrix-splitting techniques to be applied.
The modulus idea goes back to van Bokhoven \cite{vanBokhoven1981} and was
further developed in several forms. Dong and Jiang proposed a modified modulus
method \cite{DongJiang2009}, and Hadjidimos and Tzoumas studied nonstationary
extrapolated modulus algorithms \cite{HadjidimosTzoumas2009}. A major
development was given by Bai, who introduced a general modulus-based matrix
splitting framework for LCPs, including modulus-based Jacobi, Gauss-Seidel,
SOR, and AOR-type schemes \cite{Bai2010}.

Since Bai's work, many extensions of modulus-based matrix splitting methods
have appeared. Zhang proposed two-step modulus-based matrix splitting methods
\cite{Zhang2011}, Zheng and Yin introduced accelerated modulus-based methods
\cite{ZhengYin2013}, and Bai and Zhang developed synchronous multisplitting
versions \cite{BaiZhang2013a,BaiZhang2013b}. Li, Xu, and Li-Zheng studied
general, modified, and preconditioned modulus-based schemes
\cite{Li2013,Xu2015,LiZheng2016}. Wu and Li proposed two-sweep and new
modulus-based matrix splitting methods \cite{WuLi2016,Wu2022}, and related
improvements were further studied in \cite{KumarDeepmalaDuttaDas2023}. Fang
and Zhu developed a modulus-based matrix double splitting method
\cite{Fang2019}. More recently, relaxation, preconditioned, double-relaxation, improved, and
momentum-accelerated variants have also been investigated for different matrix
classes
\cite{ZhengLiVong2017,WenZhengLiPeng2018,RenWangTangWang2019,HuangCui2023,LiuWangHuang2025,LiuZhang2026}.

Despite these developments, the performance of splitting-type methods still
depends strongly on two factors: the chosen matrix splitting and the iteration
parameters \cite{Cvetkovic2014,Xu2015,Fang2019}. For large sparse problems, reducing the iteration count alone may not reduce the total CPU time, because each iteration requires the solution of a linear system. Thus, the coefficient matrix in this linear system should retain a sparse and easily solvable structure. Hence, it is useful to introduce additional parameters in such a way that the original LCP is unchanged, while the resulting iteration can be tuned for better convergence behavior and lower computational cost.

Shifted-splitting ideas have been used in the construction of efficient
preconditioners and iterative solvers for other classes of matrix problems, such
as non-Hermitian positive definite systems and saddle point problems
\cite{BaiYinSu2006,CaoEtAl2014,LiMa2019}. These works suggest
that shifted forms of matrix splittings can improve the practical behavior of
iterative methods. Inspired by this idea, we introduce shifted terms into the
modulus-based matrix splitting framework for large sparse
\(\operatorname{LCP}(q,A)\).

In this paper, we propose a relaxed shifted matrix-splitting modulus-based
iteration framework. The method starts from Bai's modulus-based matrix
splitting formulation \cite{Bai2010} and replaces the splitting \(A=M-N\) by the shifted
splitting
$
A=(M+\Omega_1-\Omega_2)-(N+\Omega_1-\Omega_2).
$
The additional matrices \(\Omega_1\) and \(\Omega_2\) introduce tunable real
shift parameters \(\phi_1\) and \(\phi_2\) without changing the original
coefficient matrix of the LCP.

The shifted terms affect both the coefficient matrix of the linear system and
the linear part of the right-hand side of the iteration. Therefore, the proposed
RSMGS scheme is algebraically different from simply applying the MMS method
with an AOR splitting, namely the MAOR method of Bai \cite{Bai2010}.

The main contributions of this paper are summarized as follows:
\begin{enumerate}
	\item A relaxed shifted matrix-splitting modulus-based framework is
	presented by introducing two shift matrices \(\Omega_1\) and
	\(\Omega_2\) into Bai's MMS method, while the original coefficient
	matrix of the LCP is kept unchanged. This shows how shifted splitting can
	be used to improve the convergence performance of modulus-based
	iterations.
	
	\item Convergence results are established for the proposed method under
	\(P\)-matrix and \(H_+\)-matrix assumptions, and practical AOR-type
	sufficient conditions are obtained for structured sparse matrices.
	
	\item Explicit admissible intervals are derived for the shift parameters
	\((\phi_1,\phi_2)\), and the experimentally selected parameters are
	verified to satisfy the sufficient convergence condition for the tested
	problems.
	
	\item A grid-search procedure is developed for selecting efficient
	parameter pairs, and the sensitivity of the method with respect to
	different choices of the shift matrices \(\Omega_1\) and \(\Omega_2\) is
	investigated.
	
\item The proposed RSMGS method is compared with several existing
modulus-based and projected methods. A direct comparison with the
two-parameter MAOR method is also included to show that the shifted
splitting changes the iteration mechanism and can lead to better
computational performance.
\end{enumerate}

The rest of the paper is organized as follows. Section~\ref{sec2} recalls the
basic notation, definitions, and auxiliary results used in the analysis.
Section~\ref{sec3} reviews Bai's MMS method and presents the proposed relaxed
shifted matrix-splitting modulus-based iteration framework. Section~\ref{sec4}
establishes the convergence results and derives admissible intervals for the
parameters. Section~\ref{sec5} reports the numerical experiments, including
parameter selection, sensitivity analysis, comparison with existing methods,
and comparison with MAOR. Finally, Section~\ref{sec6} gives the concluding
remarks.

\section{Preliminaries}\label{sec2}
In this section, we recall the notation and auxiliary results used in the
convergence analysis. For matrices and vectors of the same size, inequalities
are understood componentwise; in particular, \(X\geq Y\) means that every
entry of \(X-Y\) is nonnegative, and \(X>Y\) means that every entry of
\(X-Y\) is positive. The notation \(|X|\) denotes the matrix or vector
obtained by taking absolute values componentwise. The transpose and inverse
of a matrix \(X\) are denoted by \(X^T\) and \(X^{-1}\), respectively,
whenever the inverse exists. The spectral radius of \(X\) is denoted by
\(\rho(X)\). For a vector \(x\) $\in \mathbb{R}^{n}$, \(\|x\|_2\) denotes the Euclidean norm, the notation \(|x|\) denotes the vector whose entries are \(|x_i|\), \(i=1,2,\ldots,n\), and
\(\min(x,y)\) is understood componentwise.

Throughout the paper, when the coefficient matrix is written as
\(A=D-L-U\), the matrices \(D\), \(L\), and \(U\) denote the diagonal,
strictly lower triangular, and strictly upper triangular parts of \(A\),
respectively.

The following definitions are standard in matrix theory; see \cite{BermanPlemmons1994,Varga2000}.

\begin{definition}
	Let \(A=(a_{ij})\in\mathbb{R}^{n\times n}\). The comparison matrix of \(A\),
	denoted by \(\langle A\rangle\), is defined by
	$
	\langle A\rangle_{ii}=|a_{ii}|,~
	\langle A\rangle_{ij}=-|a_{ij}|,~ i\neq j.
	$
\end{definition}

\begin{definition}
	A matrix \(A\in\mathbb{R}^{n\times n}\) is called a \(Z\)-matrix if all its
	off-diagonal entries are non-positive. A nonsingular \(Z\)-matrix \(A\) is
	called a nonsingular \(M\)-matrix if \(A^{-1}\geq0\).
\end{definition}

\begin{definition}
	A matrix \(A\in\mathbb{R}^{n\times n}\) is called an \(H\)-matrix if its
	comparison matrix \(\langle A\rangle\) is a nonsingular \(M\)-matrix. If,
	in addition, the diagonal entries of \(A\) are positive, then \(A\) is called
	an \(H_+\)-matrix.
\end{definition}

\begin{definition}
	A matrix \(A\in\mathbb{R}^{n\times n}\) is called a \(P\)-matrix if all its
	principal minors are positive.
\end{definition}

\begin{definition}
	Let \(A=B-C\) be a splitting of \(A\in\mathbb{R}^{n\times n}\). The splitting
	is called regular if \(B^{-1}\geq0\) and \(C\geq0\). It is called
	\(H\)-compatible if
	$
	\langle A\rangle=\langle B\rangle-|C|.
	$
\end{definition}

The following results will be used later.

\begin{lemma}[\cite{Horn2013}]
	\label{lem2}
	Let $M\in\mathbb{R}^{n\times n}$. Then $\rho(M)<1$ if and only if $\lim_{k\to\infty}M^k=0.$
\end{lemma}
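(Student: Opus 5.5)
The statement is the classical equivalence: for a square real matrix \(M\), \(\rho(M)<1\) if and only if \(M^k\to 0\). I would prove it through the Jordan canonical form, treating the two directions separately. Throughout, matrix convergence means entrywise convergence, which is equivalent to convergence in any matrix norm because the space is finite-dimensional.

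\emph{Necessity.} Assume \(M^k\to0\). Let \(\lambda\) be an eigenvalue of \(M\), possibly complex, with eigenvector \(x\neq0\) in \(\mathbb{C}^n\). Then \(M^kx=\lambda^k x\). The left-hand side tends to \(0\), so \(\lambda^k x\to0\). Since \(x\neq 0\), this forces \(|\lambda|^k\to0\), hence \(|\lambda|<1\). Because \(\lambda\) was arbitrary, \(\rho(M)<1\).

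\emph{Sufficiency.} Assume \(\rho(M)<1\) and write \(M=SJS^{-1}\) with \(J\) the (complex) Jordan form. Then \(M^k=SJ^kS^{-1}\), so it suffices to show that \(J_m(\lambda)^k\to0\) for each Jordan block with \(|\lambda|<1\). Write \(J_m(\lambda)=\lambda I+E\), where \(E\) is nilpotent with \(E^m=0\). By the binomial theorem (valid since \(\lambda I\) and \(E\) commute),
\[
J_m(\lambda)^k=\sum_{j=0}^{m-1}\binom{k}{j}\lambda^{k-j}E^j .
\]
Each entry is therefore a finite combination of terms \(\binom{k}{j}\lambda^{k-j}\) with \(j\le m-1\) fixed. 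The bound \(\binom{k}{j}\le k^j\) gives
\[
\Bigl|\binom{k}{j}\lambda^{k-j}\Bigr|\le k^{j}|\lambda|^{k-j}.
\]
If \(\lambda=0\), the block is nilpotent and vanishes for \(k\ge m\). Otherwise \(k^{j}|\lambda|^{k}\to0\) because exponential decay dominates polynomial growth; this follows, for instance, from the ratio test applied to \(\sum_k k^j|\lambda|^k\). Hence every block power tends to \(0\), and so does \(M^k\).

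The only step needing care is this polynomial-versus-geometric estimate for defective Jordan blocks; everything else is routine. An alternative would be to construct, for any \(\varepsilon>0\), an induced norm with \(\|M\|\le\rho(M)+\varepsilon<1\), which yields \(\|M^k\|\le\|M\|^k\to0\). However, that construction also rests on the Jordan form with a diagonal rescaling, so the direct argument above is simpler.
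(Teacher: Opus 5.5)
Your proof is correct and complete. The eigenvector computation gives necessity. For sufficiency, the binomial expansion of each Jordan block reduces everything to $k^{j}|\lambda|^{k}\to 0$ for fixed $j$; the factor $|\lambda|^{-j}$ is a harmless constant once $\lambda=0$ is handled separately. Working with the complex Jordan form causes no difficulty, since $M^k=SJ^kS^{-1}$ holds over $\mathbb{C}$.

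The paper gives no proof of its own and simply cites Horn and Johnson, where your argument is the standard one. The only inaccuracy is your closing aside. The norm-based route (an induced norm with $\|M\|\le\rho(M)+\varepsilon$, so that $\|M^k\|\le\|M\|^k\to0$) does not need the Jordan form. It needs only Schur's unitary triangularization followed by the scaling $\operatorname{diag}(t,t^2,\ldots,t^n)$ with $t$ large, which makes it the more elementary of the two routes.
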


\begin{lemma}[\cite{BermanPlemmons1994}]
	\label{lem:inverse-comparison}
	Let $F\in\mathbb{R}^{n\times n}$ be an $H$-matrix. Then $F$ is nonsingular and $|F^{-1}|\leq \langle F\rangle^{-1}$,	where $\langle F\rangle$ denotes the comparison matrix of $F$.
\end{lemma}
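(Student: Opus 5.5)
The plan is to use the Jacobi-type splitting of $F$ and a Neumann-series argument. Write $F=D_F-B_F$, where $D_F$ is the diagonal part of $F$ and $B_F$ is its off-diagonal part with the sign flipped. Then
\[
\langle F\rangle=|D_F|-|B_F|.
\]
By assumption $\langle F\rangle$ is a nonsingular $M$-matrix, so $\langle F\rangle^{-1}\geq 0$. The proof then runs through three checks in order.

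\emph{First, $D_F$ is nonsingular.} Compare the $(i,i)$ entries of $\langle F\rangle\langle F\rangle^{-1}=I$. This gives
\[
|f_{ii}|\,(\langle F\rangle^{-1})_{ii}-\sum_{j\neq i}|f_{ij}|\,(\langle F\rangle^{-1})_{ji}=1 .
\]
Every term in the sum is nonnegative, so $|f_{ii}|(\langle F\rangle^{-1})_{ii}\geq 1$, and hence $f_{ii}\neq 0$ for every $i$.

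\emph{Second, $J:=|D_F|^{-1}|B_F|\geq 0$ satisfies $\rho(J)<1$.} I expect this to be the main step. By the Perron--Frobenius theorem there is a nonzero $x\geq 0$ with $Jx=\rho(J)x$. This means $|B_F|x=\rho(J)|D_F|x$, and therefore
\[
\langle F\rangle x=(1-\rho(J))\,|D_F|x .
\]
Hence $x=(1-\rho(J))\,\langle F\rangle^{-1}|D_F|x$.

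The vector $\langle F\rangle^{-1}|D_F|x$ is nonnegative. It is also nonzero, because $\langle F\rangle^{-1}$ and $|D_F|$ are both nonsingular and $x\neq 0$. Now suppose $\rho(J)\geq 1$.
\begin{itemize}
\item If $\rho(J)=1$, the identity gives $x=0$.
\item If $\rho(J)>1$, the identity gives $x\leq 0$, and together with $x\geq 0$ this forces $x=0$.
\end{itemize}
Both cases contradict $x\neq 0$, so $\rho(J)<1$.

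\emph{Third, nonsingularity and the bound.} Let $G=D_F^{-1}B_F$. Then $|G^k|\leq |G|^k=J^k$ for every $k$. Since $\rho(J)<1$, Lemma~\ref{lem2} gives $J^k\to 0$. Because $J\geq 0$, the series $\sum_k J^k$ converges to $(I-J)^{-1}$. The series $\sum_k G^k$ is dominated entrywise by $\sum_k J^k$, so it converges absolutely, and its sum is $(I-G)^{-1}$. Consequently $F=D_F(I-G)$ is nonsingular, with
\[
F^{-1}=\sum_{k\geq 0}G^kD_F^{-1}.
\]
Taking absolute values termwise and using $|D_F^{-1}|=|D_F|^{-1}$ gives
\[
|F^{-1}|\leq \sum_{k\geq0}J^k|D_F|^{-1}=(I-J)^{-1}|D_F|^{-1}=\bigl(|D_F|-|B_F|\bigr)^{-1}=\langle F\rangle^{-1},
\]
which is the claimed inequality.
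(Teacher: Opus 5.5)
Your proof is correct and complete. The paper gives no proof of this lemma; it simply cites Berman and Plemmons. What you have written is essentially the classical Ostrowski argument behind that citation: pass to the Jacobi splitting $F=D_F(I-G)$ with $|G|=J$, show $\rho(J)<1$, and compare the two Neumann series termwise.

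Two small remarks.

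First, your Step 2 is a special case of a result the paper already lists. Once Step 1 shows $|D_F|$ is nonsingular, $\langle F\rangle=|D_F|-|B_F|$ is a regular splitting of the nonsingular $M$-matrix $\langle F\rangle$, since $|D_F|^{-1}\geq 0$ and $|B_F|\geq 0$. Lemma~\ref{lem:regular-splitting} then gives $\rho(J)<1$ immediately. Your Perron--Frobenius argument is a sound, self-contained proof of exactly that case, so you could cite the lemma instead.

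Second, the convergence of $\sum_k J^k$ to $(I-J)^{-1}$ does not come from $J\geq 0$. It comes from $J^k\to 0$ together with the telescoping identity $(I-J)\sum_{k=0}^{N}J^k=I-J^{N+1}$, which also shows that $I-J$ is invertible. The same identity with $G$ in place of $J$, using $|G^{N+1}|\leq J^{N+1}\to 0$, is what justifies $\sum_k G^k=(I-G)^{-1}$. With that spelled out, every step checks.
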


\begin{lemma}[\cite{BermanPlemmons1994}]
	\label{lem:regular-splitting}
	Let $E=F-G$ be a regular splitting, that is,
	$F^{-1}\geq0$ and $G\geq0$. If $E$ is a nonsingular $M$-matrix, then $\rho(F^{-1}G)<1$.
\end{lemma}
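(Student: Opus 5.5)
This is the classical regular-splitting lemma (Varga, Berman--Plemmons). I would prove it with a Perron--Frobenius argument on the iteration matrix \(T=F^{-1}G\). This matrix is nonnegative because \(F^{-1}\ge0\) and \(G\ge0\). So by Perron--Frobenius, \(\rho(T)\) is an eigenvalue of \(T\) with a nonnegative eigenvector \(x\ne0\), i.e. \(Tx=\rho(T)x\).

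\textbf{Step 1: a factorization.} Since \(E=F-G=F(I-T)\) and \(E\) is nonsingular, \(I-T\) is nonsingular. This gives
\[
E^{-1}G=(I-T)^{-1}F^{-1}G=(I-T)^{-1}T .
\]

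\textbf{Step 2: apply it to \(x\).} Because \(E\) is a nonsingular \(M\)-matrix, \(E^{-1}\ge0\), and hence \(E^{-1}G\ge0\). Applying the identity of Step 1 to \(x\) gives
\[
E^{-1}Gx=\frac{\rho(T)}{1-\rho(T)}\,x .
\]
The division is legitimate: \(\rho(T)=1\) would make \(I-T\) singular, which Step 1 excludes.

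\textbf{Step 3: sign argument.} The left-hand side is a nonnegative vector. The vector \(x\) is nonnegative and nonzero, so the scalar \(\rho(T)/(1-\rho(T))\) must be \(\ge0\). Since \(\rho(T)\ge0\), this forces \(1-\rho(T)>0\), i.e. \(\rho(F^{-1}G)<1\).

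\textbf{Alternative via Lemma~\ref{lem2}.} One could instead show that the Neumann partial sums \(\sum_{k=0}^{m}T^k\) stay bounded. They do, because
\[
\sum_{k=0}^{m}T^k F^{-1}=(I-T^{m+1})E^{-1}\le E^{-1}.
\]
Monotonicity of these nonnegative sums then gives \(T^k\to0\), and Lemma~\ref{lem2} yields \(\rho(T)<1\). This route is messier when \(F^{-1}\) has zero rows, so I would use the eigenvector argument.

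The only delicate point is excluding \(\rho(T)=1\), and this is handled by the nonsingularity of \(I-T\) in Step 1.
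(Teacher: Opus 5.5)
Your proof is correct. The paper does not prove this lemma; it quotes it from Berman--Plemmons, and your Perron--Frobenius eigenvector argument via $E^{-1}G=(I-T)^{-1}T\ge 0$ is the standard textbook proof (it only uses $E^{-1}\ge 0$, i.e.\ monotonicity of $E$).

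As a minor aside, your worry about zero rows of $F^{-1}$ in the alternative route is unfounded. $F^{-1}$ is invertible, so every row has a positive entry, and $\sum_{k=0}^{m}T^kF^{-1}\le E^{-1}$ then bounds the nonnegative, nondecreasing partial sums $\sum_{k=0}^{m}T^k$ entrywise. That route therefore closes cleanly as well.
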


\begin{lemma}[\cite{BermanPlemmons1994}]
	\label{lem:m-matrix-characterization}
	Let $H\in\mathbb{R}^{n\times n}$ be nonnegative and let $s>0$. Then
	$sI-H$ is a nonsingular $M$-matrix if and only if $s>\rho(H)$.
\end{lemma}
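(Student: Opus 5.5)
The statement is Lemma~\ref{lem:m-matrix-characterization}: for a nonnegative \(H\) and \(s>0\), the matrix \(sI-H\) is a nonsingular \(M\)-matrix if and only if \(s>\rho(H)\). The plan is to prove the two directions separately. The tools are the Neumann series, which is justified by Lemma~\ref{lem2}, and the Perron--Frobenius theorem for nonnegative matrices. Note first that \(sI-H\) is always a \(Z\)-matrix, since its off-diagonal entries are \(-h_{ij}\leq 0\). So only nonsingularity and nonnegativity of the inverse have to be checked.

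\emph{Sufficiency.} Assume \(s>\rho(H)\) and put \(T=H/s\geq 0\), so that \(\rho(T)<1\).
\begin{itemize}
\item By Lemma~\ref{lem2}, \(T^k\to 0\).
\item The telescoping identity \((I-T)\sum_{k=0}^{m}T^k=I-T^{m+1}\) then gives invertibility of \(I-T\): no eigenvalue of \(T\) equals \(1\), because \(\rho(T)<1\).
\item Letting \(m\to\infty\) gives \((I-T)^{-1}=\sum_{k\geq 0}T^k\). The series converges because the partial sums equal \((I-T)^{-1}(I-T^{m+1})\), and these converge.
\item Each \(T^k\geq 0\), so \((sI-H)^{-1}=s^{-1}\sum_{k\geq0}T^k\geq 0\).
\end{itemize}
Hence \(sI-H\) is a nonsingular \(M\)-matrix.

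\emph{Necessity.} Assume \(sI-H\) is nonsingular with \((sI-H)^{-1}\geq 0\).
\begin{itemize}
\item By Perron--Frobenius, \(\rho(H)\) is an eigenvalue of \(H\) with an eigenvector \(x\geq 0\), \(x\neq 0\). This is the one external ingredient; it holds for every nonnegative matrix, reducible or not, by a limiting argument from positive matrices.
\item Then \((sI-H)x=(s-\rho(H))x\).
\item If \(s=\rho(H)\), this says \((sI-H)x=0\) with \(x\neq 0\), contradicting nonsingularity. So \(s\neq\rho(H)\), and \(x=(s-\rho(H))(sI-H)^{-1}x\).
\item \((sI-H)^{-1}x\geq 0\), and it is nonzero because the inverse is nonsingular and \(x\neq 0\).
\item The identity \(x=(s-\rho(H))(sI-H)^{-1}x\) writes the nonzero nonnegative vector \(x\) as the scalar \(s-\rho(H)\) times another nonzero nonnegative vector. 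Comparing a coordinate where \(x_i>0\) forces \(s-\rho(H)>0\).
\end{itemize}

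The main obstacle is this necessity direction, because it needs a nonnegative eigenvector for \(\rho(H)\) when \(H\) may be reducible. I would cite the general Perron--Frobenius theorem for nonnegative matrices from \cite{BermanPlemmons1994,Varga2000}. If a self-contained route is wanted, I would instead argue by perturbation. Apply the argument to the entrywise positive matrix \(H_\varepsilon=H+\varepsilon ee^T\). For small \(\varepsilon\), \(sI-H_\varepsilon\) is a rank-one update of \(sI-H\), and by Sherman--Morrison its inverse stays nonnegative, since \(1-\varepsilon e^T(sI-H)^{-1}e>0\) for small \(\varepsilon\). This gives \(s>\rho(H_\varepsilon)\geq\rho(H)\), where the last inequality is monotonicity of the spectral radius for nonnegative matrices.
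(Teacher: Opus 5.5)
Your proof is correct. The paper gives no argument of its own for this lemma; it simply quotes it from \cite{BermanPlemmons1994}. What you propose is the standard textbook proof behind that citation. For sufficiency you use the Neumann series $\sum_{k\ge0}(H/s)^k$, which needs only Lemma~\ref{lem2}. For necessity you use a nonnegative Perron eigenvector of $H$, and comparing a coordinate with $x_i>0$ correctly forces $s-\rho(H)>0$. Your Sherman--Morrison perturbation is also sound, but it still rests on Perron's theorem for positive matrices, so it is not fully self-contained.

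You can remove the obstacle you identified altogether. Take an arbitrary eigenpair $Hx=\lambda x$ with $x\neq0$, possibly complex. Since $H\ge0$, you get $H|x|\ge|Hx|=|\lambda|\,|x|$, so $(sI-H)|x|\le(s-|\lambda|)|x|$. Multiplying by $(sI-H)^{-1}\ge0$ gives $|x|\le(s-|\lambda|)(sI-H)^{-1}|x|$. If $s\le|\lambda|$, the right-hand side is componentwise nonpositive, which forces $x=0$. Hence $s>|\lambda|$ for every eigenvalue, i.e.\ $s>\rho(H)$. This needs neither Perron--Frobenius for reducible matrices nor any limiting or perturbation argument, so the whole lemma becomes elementary.
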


\begin{lemma}[\cite{BermanPlemmons1994}]
	\label{lem:spectral-monotonicity}
	If $X,Y\in\mathbb{R}^{n\times n}$ satisfy
	$0\leq X\leq Y$, then $\rho(X)\leq\rho(Y)$.
\end{lemma}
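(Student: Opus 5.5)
This is the classical spectral-radius monotonicity result for nonnegative matrices (Perron--Frobenius theory). Since the paper cites it from \cite{BermanPlemmons1994}, I would reprove it by the standard route: first show that the entrywise bound \(0\leq X\leq Y\) propagates to all powers, and then turn this into a comparison of spectral radii through Gelfand's formula \(\rho(Z)=\lim_{k\to\infty}\|Z^k\|^{1/k}\), valid for any matrix norm.

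\emph{Step 1: bound the powers.} I would show by induction that \(0\leq X^k\leq Y^k\) for every \(k\geq1\). The case \(k=1\) is the hypothesis. If \(0\leq X^k\leq Y^k\), then
\[
X^{k+1}=X^kX\leq Y^kX\leq Y^kY=Y^{k+1}.
\]
Both inequalities use only that products of nonnegative matrices are nonnegative, together with the entrywise order. The first uses \(Y^k-X^k\geq0\) and \(X\geq0\); the second uses \(Y^k\geq0\) and \(Y-X\geq0\). Nonnegativity of \(X^{k+1}\) is immediate.

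\emph{Step 2: pass to a monotone norm.} I would use a norm that respects the entrywise order on nonnegative matrices, for instance the \(\infty\)-norm (maximum absolute row sum) or the Frobenius norm. For \(0\leq P\leq Q\) such a norm satisfies \(\|P\|\leq\|Q\|\). Applying this with \(P=X^k\) and \(Q=Y^k\) gives \(\|X^k\|^{1/k}\leq\|Y^k\|^{1/k}\) for all \(k\). Letting \(k\to\infty\) and using Gelfand's formula then yields \(\rho(X)\leq\rho(Y)\).

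\emph{Main obstacle.} The only nontrivial ingredient is Gelfand's formula, which is not among the lemmas stated in the paper. If I want to avoid it, I would use Lemma~\ref{lem2} instead. Fix \(\varepsilon>0\) and set \(Y_\varepsilon=Y/(\rho(Y)+\varepsilon)\), so that \(\rho(Y_\varepsilon)<1\). Lemma~\ref{lem2} then gives \(Y_\varepsilon^k\to0\). By Step~1 applied to \(X/(\rho(Y)+\varepsilon)\leq Y_\varepsilon\), we get
\[
0\leq \bigl(X/(\rho(Y)+\varepsilon)\bigr)^k\leq Y_\varepsilon^k,
\]
so the scaled powers of \(X\) also tend to zero. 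The converse direction of Lemma~\ref{lem2} now gives \(\rho(X)<\rho(Y)+\varepsilon\). Since \(\varepsilon>0\) was arbitrary, \(\rho(X)\leq\rho(Y)\). This version relies only on Lemma~\ref{lem2} and the order-preserving induction, so it is the argument I would present.
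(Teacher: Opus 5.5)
Your proof is correct, and both versions work. The induction $0\le X^k\le Y^k$ is sound, since the two steps use $(Y^k-X^k)X\ge 0$ and $Y^k(Y-X)\ge 0$. In the Lemma~\ref{lem2} version, scaling by $\rho(Y)+\varepsilon>0$ is legitimate even when $\rho(Y)=0$. The entrywise squeeze then gives $X_\varepsilon^k\to 0$, and the converse direction of Lemma~\ref{lem2} yields $\rho(X)<\rho(Y)+\varepsilon$.

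The paper gives no proof of this lemma. It quotes the result from Berman and Plemmons and uses it as a black box in the proofs of Theorems~\ref{thm:aor-general} and~\ref{thm:Hplus-weighted]}. So your argument supplies something the paper omits rather than replacing a given proof.

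The other standard route is through Perron--Frobenius theory:
\begin{itemize}
\item take a nonzero nonnegative eigenvector $v$ of $X$ for $\rho(X)$;
\item observe that $Yv\ge Xv=\rho(X)v$;
\item conclude $\rho(Y)\ge\rho(X)$ by a subinvariance (Collatz--Wielandt) argument.
\end{itemize}

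Your power-comparison argument avoids that machinery entirely. In particular, it does not need the existence of a nonnegative Perron vector for a possibly reducible $X$, nor the extra care the subinvariance step requires when $Y$ is reducible. Your second version has a further merit: it rests only on Lemma~\ref{lem2}, which the paper already states, so the preliminaries become self-contained apart from that one cited fact.

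What the Perron--Frobenius route buys in exchange is refinement. When $Y$ is irreducible, it gives the strict inequality $\rho(X)<\rho(Y)$ for $X\ne Y$, which comparing powers does not produce by itself. None of the paper's uses of the lemma needs strictness, so your argument is fully adequate here.
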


\begin{lemma}[\cite{BermanPlemmons1994}]
	\label{lem:M-semipositive}
	Let $A\in\mathbb{R}^{n\times n}$ be a $Z$-matrix. Then $A$ is a nonsingular
	$M$-matrix if and only if there exists a vector $w>0$ such that
$Aw>0.$
\end{lemma}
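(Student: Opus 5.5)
The statement to prove is Lemma~\ref{lem:M-semipositive}: a $Z$-matrix $A$ is a nonsingular $M$-matrix if and only if there exists $w>0$ with $Aw>0$. I will prove the two implications separately, reducing both to Lemma~\ref{lem:m-matrix-characterization} via the standard representation $A=sI-H$ with $H\ge0$.

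\emph{Representation.} Since $A$ is a $Z$-matrix, choose $s>\max_i a_{ii}$, with $s>0$. Then $H:=sI-A$ satisfies $H\geq0$: its diagonal entries are $s-a_{ii}>0$ and its off-diagonal entries are $-a_{ij}\ge0$.

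\emph{Necessity.} Suppose $A$ is a nonsingular $M$-matrix, so $A^{-1}\geq0$. Put $e=(1,\dots,1)^T$ and $w:=A^{-1}e\ge0$, so $Aw=e>0$. To see $w>0$, note that $A^{-1}$ is nonsingular, so no row of $A^{-1}$ is identically zero. Since every entry of $A^{-1}$ is nonnegative, each row sum is positive, i.e.\ $w_i=(A^{-1}e)_i>0$ for every $i$.

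\emph{Sufficiency.} Suppose $w>0$ and $Aw>0$, so $Hw<sw$ componentwise. Choose $\theta<1$ with $Hw\le\theta s w$; this is possible by taking $\theta$ to be the maximum of the ratios $(Hw)_i/(sw_i)$, each of which is $<1$.

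The key step, and the main obstacle, is to show that $Hw\le \theta s w$ forces $\rho(H)\le\theta s$ for a nonnegative $H$. I will handle this with a diagonal similarity. Let $W=\operatorname{diag}(w)$. The matrix $W^{-1}HW$ is nonnegative, and its $i$-th row sum is $(Hw)_i/w_i\le\theta s$. Hence $\|W^{-1}HW\|_\infty\le\theta s$, and therefore
\[
\rho(H)=\rho(W^{-1}HW)\le\|W^{-1}HW\|_\infty\le \theta s<s .
\]
Lemma~\ref{lem:m-matrix-characterization} now gives that $sI-H=A$ is a nonsingular $M$-matrix.

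This route needs only the definition of a $Z$-matrix, the norm bound on the spectral radius, and Lemma~\ref{lem:m-matrix-characterization}, so the argument is self-contained.
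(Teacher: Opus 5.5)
Your proof is correct. The paper gives no proof of this lemma; it is quoted from Berman and Plemmons. Your argument is the standard one behind that citation. For necessity, you take $w=A^{-1}e$, which is positive because $A^{-1}\ge 0$ is nonsingular and so has no zero row. For sufficiency, you use the diagonal similarity $W^{-1}HW$ with $W=\operatorname{diag}(w)$: its row sums $(Hw)_i/w_i$ are all below $s$, which bounds $\rho(H)<s$.

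The one thing worth noting is that your sufficiency step reduces this cited fact to another cited fact, Lemma~\ref{lem:m-matrix-characterization}. If you want the argument self-contained relative to the paper's definition (a nonsingular $Z$-matrix with $A^{-1}\ge 0$), you can finish directly. From $\rho(H/s)<1$, the matrix $I-H/s$ is nonsingular, and $(I-H/s)\sum_{k=0}^{N}(H/s)^k=I-(H/s)^{N+1}\to I$ by Lemma~\ref{lem2}. Hence $A^{-1}=s^{-1}\sum_{k\ge 0}(H/s)^k\ge 0$.
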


\begin{lemma}[\cite{Bai2010}]
	\label{lem1}
	Let \(A=M-N\) be a splitting of \(A\in\mathbb{R}^{n\times n}\). Let
	\(\Omega\) be a positive diagonal matrix and let \(\gamma>0\). Then
	\(\operatorname{LCP}(q,A)\) defined in \eqref{eq1} is equivalent to the
	following fixed-point equation:
	\begin{equation}
		(M+\gamma\Omega)x
		=
		Nx+(\gamma\Omega-A)|x|-\gamma q.
		\label{eq:basic-modulus}
	\end{equation}
	More precisely, if \((z,w)\) is a solution of \eqref{eq1}, then
	\begin{equation*}
		x=\frac{1}{2}\left(\gamma z-\Omega^{-1}w\right)
	\end{equation*}
	satisfies \eqref{eq:basic-modulus}. Conversely, if \(x\) satisfies
	\eqref{eq:basic-modulus}, then
	\begin{equation}
		z=\frac{1}{\gamma}(|x|+x),
		\qquad
		w=\Omega(|x|-x)
		\label{eq:zw-basic}
	\end{equation}
	is a solution of \eqref{eq1}.
\end{lemma}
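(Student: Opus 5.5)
We prove both directions of Lemma~\ref{lem1} by direct substitution. The key tool is the componentwise identity \(|x|+x=2\max(x,0)\), \(|x|-x=2\max(-x,0)\), so that \((|x|+x)^T(|x|-x)=0\) and both vectors are nonnegative. The fixed-point equation \eqref{eq:basic-modulus} is first rewritten by moving \(Nx\) to the left. Since \(M-N=A\), it becomes
\begin{equation*}
A x+\gamma\Omega x=(\gamma\Omega-A)|x|-\gamma q,
\end{equation*}
which is equivalent to
\begin{equation*}
A(|x|+x)+\gamma q=\gamma\Omega(|x|-x).
\end{equation*}
This reformulation no longer involves the splitting, so the rest of the argument is independent of \(M\) and \(N\).

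For the converse direction, let \(x\) satisfy \eqref{eq:basic-modulus} and define \(z,w\) by \eqref{eq:zw-basic}. Dividing the reformulated equation by \(\gamma>0\) gives \(Az+q=\Omega(|x|-x)=w\). Nonnegativity \(z\ge0\) and \(w\ge0\) follows from \(\gamma>0\), from \(\Omega\) being positive diagonal, and from the sign of \(|x|\pm x\). Complementarity \(z^Tw=\gamma^{-1}(|x|+x)^T\Omega(|x|-x)=0\) holds because \(\Omega\) is diagonal, so the sum is \(\gamma^{-1}\sum_i\Omega_{ii}(|x_i|+x_i)(|x_i|-x_i)=\gamma^{-1}\sum_i\Omega_{ii}(x_i^2-x_i^2)=0\).

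For the forward direction, let \((z,w)\) solve \eqref{eq1} and set \(x=\tfrac12(\gamma z-\Omega^{-1}w)\). The main point is to show \(|x|=\tfrac12(\gamma z+\Omega^{-1}w)\). We argue componentwise using complementarity: \(z_i\ge0\), \(w_i\ge0\), and \(z_iw_i=0\) (since the sum of nonnegative terms \(z^Tw\) vanishes). If \(w_i=0\), then \(x_i=\gamma z_i/2\ge0\). If \(z_i=0\), then \(x_i=-w_i/(2\Omega_{ii})\le0\). In both cases the claimed formula for \(|x_i|\) holds, and here positivity of \(\gamma\) and of \(\Omega_{ii}\) is essential. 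It follows that \(|x|+x=\gamma z\) and \(|x|-x=\Omega^{-1}w\). Substituting into the reformulated equation gives \(\gamma Az+\gamma q=\gamma w\), which is exactly \(w=Az+q\). Reversing the algebraic rewriting then shows that \(x\) satisfies \eqref{eq:basic-modulus}.

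The only step requiring care is the componentwise identification of \(|x|\) in the forward direction. It is the place where complementarity and the positivity of \(\gamma\) and \(\Omega\) are genuinely used; everything else is linear algebra based on \(A=M-N\).
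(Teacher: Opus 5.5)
Your proof is correct and complete. The paper gives no proof of this lemma and simply quotes it from \cite{Bai2010}. Your argument is the standard direct-substitution verification: use $M-N=A$ to reduce the fixed-point equation to $A(|x|+x)+\gamma q=\gamma\Omega(|x|-x)$, then identify $|x|+x=\gamma z$ and $|x|-x=\Omega^{-1}w$ componentwise from $z_iw_i=0$ and the positivity of $\gamma$ and $\Omega$.
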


\section{Relaxed Shifted Matrix-Splitting Modulus-Based Iteration Method}\label{sec3}

The fixed-point formulation in Lemma~\ref{lem1} naturally leads to the
modulus-based matrix-splitting iteration of Bai \cite{Bai2010}, stated below.

\begin{method}[\cite{Bai2010} MMS method]
	\label{meth:bai-mms}\leavevmode\\
	Let \(A=M-N\) be a splitting of \(A\), where \(M\) is nonsingular. Let
	\(\Omega\) be a positive diagonal matrix and let \(\gamma>0\). Starting from
	an initial vector \(x^{(0)}\in\mathbb{R}^n\), compute \(x^{(k+1)}\) from
	\begin{equation}
		(M+\gamma\Omega)x^{(k+1)}
		=
		Nx^{(k)}
		+
		(\gamma\Omega-A)|x^{(k)}|
		-
		\gamma q.
		\label{eq:bai-mms}
	\end{equation}
	The corresponding approximation to the LCP solution is given by
	\begin{equation}
		z^{(k+1)}
		=
		\frac{1}{\gamma}
		\left(|x^{(k+1)}|+x^{(k+1)}\right).
		\label{eq:bai-z-recovery}
	\end{equation}
\end{method}
The splitting \(A=M-N\) can be written in the shifted form
\begin{equation}
	A
	=
	M-N
	=
	(M+\Omega_1-\Omega_2)
	-
	(N+\Omega_1-\Omega_2),
	\label{eq:shifted-splitting}
\end{equation}
where \(\Omega_1,\Omega_2\in\mathbb{R}^{n\times n}\) are shift parameter
matrices introduced to relax the underlying matrix splitting. The two added terms cancel each other, and therefore the coefficient matrix
\(A\) remains unchanged. However, the matrices used in the iteration are now
\(M+\Omega_1-\Omega_2\) and \(N+\Omega_1-\Omega_2\), instead of \(M\) and
\(N\). Thus, the shifted splitting modifies the iterative process while
preserving the original LCP. This leads to the following relaxed shifted
matrix-splitting modulus-based iteration method.

\begin{method}[Relaxed shifted matrix-splitting modulus-based iteration method]
	\label{meth:rsmms}\leavevmode\\
	For \(\gamma>0\), a positive diagonal matrix \(\Omega_4\), and an initial
	vector \(x^{(0)}\in\mathbb{R}^n\), compute \(x^{(k+1)}\) from
	\begin{equation}
		\left(M+\Omega_1-\Omega_2+\gamma\Omega_4\right)x^{(k+1)}
		=
		\left(N+\Omega_1-\Omega_2\right)x^{(k)}
		+
		\left(\gamma\Omega_4-A\right)|x^{(k)}|
		-\gamma q.
		\label{eq:proposed-method}
	\end{equation}
	The corresponding approximation to the LCP solution is obtained by
	\begin{equation}
		z^{(k+1)}
		=
		\Omega_3
		\left(|x^{(k+1)}|+x^{(k+1)}\right),
		\qquad
		\Omega_3=\frac{1}{\gamma}I.
		\label{eq:z-update}
	\end{equation}
\end{method}

Following the standard residual measure used in \cite{Bai2010,Wu2022}, the
residual at the \((k+1)\) iteration is defined by
\begin{equation}
	\operatorname{RES}(z^{(k+1)})
	=
	\left\|
	\min\left(Az^{(k+1)}+q,z^{(k+1)}\right)
	\right\|_2,
	\label{eq:residual}
\end{equation}
where the minimum is taken componentwise. The iteration is stopped when
\(\operatorname{RES}(z^{(k+1)})<\varepsilon\). The complete computational
procedure is given in Algorithm~\ref{algo1}.

\begin{algorithm}[h!]
	\caption{The relaxed shifted matrix-splitting modulus-based iteration method}
	\label{algo1}
	\begin{algorithmic}[1]
		
		\Require Matrix \(A\), vector \(q\), shifted splitting
		\(A=(M+\Omega_1-\Omega_2)-(N+\Omega_1-\Omega_2)\), matrices
		\(\Omega_3,\Omega_4\), parameter \(\gamma>0\), initial vector
		\(x^{(0)}\), tolerance \(\varepsilon>0\), and maximum iteration number
		\(K_{\max}\).
		
		\Ensure Approximate solution \(z^\star\), iteration count \(IT\), and
		residual \(RES^\star\).
		
		\For{\(k=0,1,2,\ldots,K_{\max}-1\)}
		
		\State Compute \(x^{(k+1)}\) from
		\begin{equation*}
			\left(M+\Omega_1-\Omega_2+\gamma\Omega_4\right)x^{(k+1)}
			=
			\left(N+\Omega_1-\Omega_2\right)x^{(k)}
			+
			\left(\gamma\Omega_4-A\right)|x^{(k)}|
			-\gamma q.
		\end{equation*}
		
		\State Update
		\begin{equation*}
			z^{(k+1)}
			=
			\Omega_3\left(|x^{(k+1)}|+x^{(k+1)}\right).
		\end{equation*}
		
		\State Compute
		\begin{equation*}
			RES^{(k+1)}
			=
			\left\|
			\min\left(Az^{(k+1)}+q,z^{(k+1)}\right)
			\right\|_2.
		\end{equation*}
		
		\If{\(RES^{(k+1)}<\varepsilon\)}
		\State Set \(z^\star=z^{(k+1)}\), \(IT=k+1\), and
		\(RES^\star=RES^{(k+1)}\).
		\State \Return \(z^\star\), \(IT\), and \(RES^\star\).
		\EndIf
		
		\EndFor
		
		\State \Return ``The method does not converge within \(K_{\max}\) iterations."
		
	\end{algorithmic}
\end{algorithm}

The shift parameter matrices \(\Omega_1\) and \(\Omega_2\) can be chosen from
simple sparse components of \(A\), such as \(I\), \(D\), \(L\), or \(U\), so
that the sparsity of the coefficient matrices is preserved. For example, one
may take \(\Omega_1=\phi_1I\) and \(\Omega_2=\phi_2L\),
\(\Omega_1=\phi_1I\) and \(\Omega_2=\phi_2U\), or
\(\Omega_1=\phi_1I\) and \(\Omega_2=\phi_2D\), where
\(\phi_1,\phi_2\in\mathbb{R}\) are relaxation parameters.

Let \(\alpha>0\), \(\beta\in\mathbb{R}\), and choose
\begin{equation}
	M
	=
	\frac{1}{\alpha}(D-\beta L),
	\qquad
	N
	=
	\frac{1}{\alpha}\left[(1-\alpha)D+(\alpha-\beta)L+\alpha U\right].
	\label{eq:aor-splitting}
\end{equation}
Substituting this splitting into Method~\ref{meth:rsmms} gives the relaxed
shifted matrix-splitting modulus-based accelerated overrelaxation (RSMAOR) iterative method.
In particular, \((\alpha,\beta)=(1,0)\) gives the relaxed shifted
matrix-splitting modulus-based Jacobi (RSMJ) iterative method, while
\((\alpha,\beta)=(1,1)\) gives the relaxed shifted matrix-splitting
modulus-based Gauss-Seidel (RSMGS) iterative method . When \(\beta=\alpha\neq0\), the method
reduces to the relaxed shifted matrix-splitting modulus-based SOR (RSMSOR) iterative method.

\section{Convergence Analysis}\label{sec4}

In this section, we study the convergence of the proposed shifted-splitting
multi-parameter modulus-based iteration method. We first establish a general
spectral-radius condition for the case where $A$ is a $P$-matrix. Then we derive
more explicit and verifiable sufficient conditions for important special choices
of the splitting and the parameter matrices. Finally, we discuss convergence for
the class of $H_{+}$-matrices.

\begin{theorem}
	\label{thm:p-matrix-general}
	Let $A\in\mathbb{R}^{n\times n}$ be a $P$-matrix and let $A=M-N$ be a
	splitting of $A$. Let $\gamma>0$, and let
	$\Omega_4$ be a positive diagonal matrix. Define
	$
	B=M+\Omega_1-\Omega_2+\gamma\Omega_4,
	$
	$
	C=N+\Omega_1-\Omega_2
	$
	and
	$
	G=\gamma\Omega_4-A.
	$
	Assume that $B$ is nonsingular. If
	\begin{equation}
		\rho\left(|B^{-1}C|+|B^{-1}G|\right)<1,
		\label{eq:p-matrix-general-condition}
	\end{equation}
	then, for any initial vector $x^{(0)}\in\mathbb{R}^{n}$, the sequence
	$\{x^{(k)}\}_{k=0}^{\infty}$ generated by \eqref{eq:proposed-method}
	converges to a vector $x^{\ast}$. Consequently,
	$z^{(k)}=\Omega_3(|x^{(k)}|+x^{(k)})$ converges to the unique solution
	$z^{\ast}$ of $\operatorname{LCP}(q,A)$.
\end{theorem}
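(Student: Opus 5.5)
The plan is to first identify the fixed-point equation and then run a contraction argument in a weighted norm induced by a Perron vector.

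\textbf{Existence of a solution and the fixed-point identity.} Since $A$ is a $P$-matrix, $\operatorname{LCP}(q,A)$ has a unique solution $z^\ast$ with $w^\ast=Az^\ast+q$. Note that $A=(M+\Omega_1-\Omega_2)-(N+\Omega_1-\Omega_2)$ is again a splitting of $A$. Applying Lemma~\ref{lem1} to this splitting, with $\Omega=\Omega_4$, shows that
\[
x^\ast=\tfrac12\left(\gamma z^\ast-\Omega_4^{-1}w^\ast\right)
\]
satisfies $Bx^\ast=Cx^\ast+G|x^\ast|-\gamma q$. Conversely, $z^\ast=\Omega_3(|x^\ast|+x^\ast)$ with $\Omega_3=\gamma^{-1}I$.

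\textbf{Error recursion.} Subtract the fixed-point identity from \eqref{eq:proposed-method} and multiply by $B^{-1}$. With $e^{(k)}=x^{(k)}-x^\ast$ this gives
\[
e^{(k+1)}=B^{-1}C\,e^{(k)}+B^{-1}G\left(|x^{(k)}|-|x^\ast|\right).
\]
Using $\bigl||x^{(k)}|-|x^\ast|\bigr|\le|e^{(k)}|$, we obtain the componentwise bound $|e^{(k+1)}|\le T|e^{(k)}|$, where $T=|B^{-1}C|+|B^{-1}G|\ge0$. Since $T$ is nonnegative, iterating the inequality is legitimate, and so $|e^{(k)}|\le T^k|e^{(0)}|$. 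By \eqref{eq:p-matrix-general-condition} and Lemma~\ref{lem2}, $T^k\to0$. Hence $x^{(k)}\to x^\ast$ for every $x^{(0)}$.

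\textbf{Convergence of $z^{(k)}$ and the delicate point.} The map $x\mapsto\Omega_3(|x|+x)$ is continuous, so $z^{(k)}\to\Omega_3(|x^\ast|+x^\ast)=z^\ast$. The step needing care is fixing $x^\ast$ at the outset. If one instead argues that the iterates form a Cauchy sequence, one must show the limit solves the fixed-point equation and then invoke the converse part of Lemma~\ref{lem1}. Anchoring the argument at $x^\ast$ built from the unique $P$-matrix solution avoids this. It also gives uniqueness of the fixed point: two fixed points $x,y$ would satisfy $|x-y|\le T|x-y|$, hence $|x-y|\le T^k|x-y|\to0$.
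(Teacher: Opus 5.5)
Your proof is correct and follows essentially the same route as the paper: you fix the modulus vector $x^\ast$ of the unique $P$-matrix solution via Lemma~\ref{lem1} (applied to the shifted splitting with $\Omega=\Omega_4$), derive $|x^{(k)}-x^\ast|\le T^k|x^{(0)}-x^\ast|$ with $T=|B^{-1}C|+|B^{-1}G|$, and conclude with Lemma~\ref{lem2} and the continuity of $x\mapsto\Omega_3(|x|+x)$. The only mismatch is your opening sentence, which announces a weighted-norm (Perron-vector) contraction argument that you never use, so you can simply delete it.
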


\begin{proof}
	Since $A$ is a $P$-matrix, the problem $\operatorname{LCP}(q,A)$ has a unique
	solution $z^{\ast}$. By Lemma~\ref{lem1}, there exists a corresponding
	modulus vector $x^{\ast}$ satisfying
	\begin{equation}
		Bx^{\ast}
		=
		Cx^{\ast}+G|x^{\ast}|-\gamma q .
		\label{eq:fixed-point-star}
	\end{equation}
	Subtracting \eqref{eq:fixed-point-star} from the iteration
	\eqref{eq:proposed-method}, we obtain
	\begin{equation}
		x^{(k+1)}-x^{\ast}
		=
		B^{-1}C(x^{(k)}-x^{\ast})
		+
		B^{-1}G\left(|x^{(k)}|-|x^{\ast}|\right).
		\label{eq:error-equation}
	\end{equation}
	Using the componentwise inequality
	\begin{equation*}
		\left||x|-|y|\right|\leq |x-y|,
	\end{equation*}
	it follows from \eqref{eq:error-equation} that
	\begin{equation}
		|x^{(k+1)}-x^{\ast}|
		\leq
		\left(|B^{-1}C|+|B^{-1}G|\right)
		|x^{(k)}-x^{\ast}|.
		\label{eq:sharp-error-bound}
	\end{equation}
	Let
	\begin{equation*}
		T=|B^{-1}C|+|B^{-1}G|.
	\end{equation*}
	Then \eqref{eq:sharp-error-bound} implies
	\begin{equation*}
		|x^{(k)}-x^{\ast}|
		\leq
		T^k |x^{(0)}-x^{\ast}|,\qquad k=0,1,2,\ldots .
	\end{equation*}
	Since $\rho(T)<1$, Lemma~\ref{lem2} gives $T^k\to0$ as $k\to\infty$.
	Hence $x^{(k)}\to x^{\ast}$. Finally, by Lemma~\ref{lem1}, the vector
	$z^{(k)}=\Omega_3(|x^{(k)}|+x^{(k)})$ corresponds to the LCP variable.
	Since this mapping is continuous, we obtain $z^{(k)}\to z^{\ast}$, where
	$z^{\ast}$ is the unique solution of $\operatorname{LCP}(q,A)$.
\end{proof}

\begin{remark}
	The convergence condition in Theorem~\ref{thm:p-matrix-general} is based on
	the matrix
$
		T=|B^{-1}C|+|B^{-1}G|.
$
	Since
$
		T\leq |B^{-1}|\left(|C|+|G|\right),
$
	a simpler but generally more conservative sufficient condition is
$
		\rho(T_0)<1,
$
	where
$
		T_0=|B^{-1}|\left(|C|+|G|\right).
$
\end{remark}

\begin{remark}
	The term $|G|=|\gamma\Omega_4-A|$ can be estimated in two useful ways. Define
$
		\widehat{T}
		=
		|B^{-1}|
		\left(
		|C|+|\gamma\Omega_4-M|+|N|
		\right)
$
	and
$
		\widetilde{T}
		=
		|B^{-1}|
		\left(
		|C|
		+
		|\gamma\Omega_4-(M+\Omega_1-\Omega_2)|
		+
		|N+\Omega_1-\Omega_2|
		\right).
$
	Indeed, these two matrices follow from
$
		|\gamma\Omega_4-A|
		\leq
		|\gamma\Omega_4-M|+|N|
$
	and
$
		|\gamma\Omega_4-A|
		\leq
		|\gamma\Omega_4-(M+\Omega_1-\Omega_2)|
		+
		|N+\Omega_1-\Omega_2|,
$
	respectively. Therefore, either of the conditions
$
		\rho(\widehat{T})<1
$
	or
$
		\rho(\widetilde{T})<1
$
	is sufficient for the convergence of the proposed method.
\end{remark}

\begin{remark}
	Since $\rho(T)\leq \|T\|$ for any induced matrix norm, the spectral-radius
	condition in Theorem~\ref{thm:p-matrix-general} can be replaced by the
	more easily checked condition
	$
	\left\|
	|B^{-1}C|+|B^{-1}G|
	\right\|<1.
	$
	In particular, one may use the Euclidean norm $\|\cdot\|_2$, the infinity norm
	$\|\cdot\|_{\infty}$, or any other induced matrix norm.
\end{remark}

We now derive a parameter-dependent sufficient condition for the convergence of
the proposed method when the underlying splitting is of AOR type.

\begin{theorem}
	\label{thm:aor-general}
	Let $A\in\mathbb{R}^{n\times n}$ be a $P$-matrix such that
	$A=D-L-U$, where $D=dI$ with $d>0$, and $L$, $U$ are strictly
	lower and strictly upper triangular matrices, respectively. For fixed
	$\alpha>0$ and $\beta\in\mathbb{R}$, consider the AOR splitting
	$
	M=\frac{1}{\alpha}(D-\beta L)
	$
	and
	$
	N=\frac{1}{\alpha}\left\{(1-\alpha)D+(\alpha-\beta)L+\alpha U\right\}.
	$
	Let $\gamma>0$ and let $\Omega_4$ be chosen such that
	$\gamma\Omega_4=D$. Let
	$
	\Omega_1=\phi_1I,\quad \Omega_2=\phi_2L,
	$
	where $\phi_1,\phi_2\in\mathbb{R}$. Define
	$
	s_{\alpha,\beta}
	=
	\left|\left(1+\frac{1}{\alpha}\right)d+\phi_1\right|
	-
	\left|\frac{1-\alpha}{\alpha}d+\phi_1\right|
	$
	and
	$
	r_{\alpha,\beta}
	=
	\left|\frac{\beta}{\alpha}+\phi_2\right|
	+
	1+
	\left|\frac{\alpha-\beta}{\alpha}-\phi_2\right|.
	$
	If
	$
	s_{\alpha,\beta}>\rho(r_{\alpha,\beta}|L|+2|U|),
	$
	then the proposed accelerated overrelaxation (AOR)-type method converges to the unique solution of
	$\operatorname{LCP}(q,A)$ for every initial vector
	$x^{(0)}\in\mathbb{R}^n$.
\end{theorem}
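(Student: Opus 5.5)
The plan is to reduce Theorem~\ref{thm:aor-general} to Theorem~\ref{thm:p-matrix-general}. I will majorize $T=|B^{-1}C|+|B^{-1}G|$ entrywise by $F^{-1}\widetilde G$, where $F-\widetilde G$ is a regular splitting of the $M$-matrix $s_{\alpha,\beta}I-(r_{\alpha,\beta}|L|+2|U|)$.

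\textbf{Step 1: compute $B$, $C$, $G$.} With $D=dI$, $\gamma\Omega_4=D$, $\Omega_1=\phi_1 I$ and $\Omega_2=\phi_2 L$, direct substitution gives
\begin{align*}
B&=\Bigl(\bigl(1+\tfrac{1}{\alpha}\bigr)d+\phi_1\Bigr)I-\Bigl(\tfrac{\beta}{\alpha}+\phi_2\Bigr)L,\\
C&=\Bigl(\tfrac{1-\alpha}{\alpha}d+\phi_1\Bigr)I+\Bigl(\tfrac{\alpha-\beta}{\alpha}-\phi_2\Bigr)L+U,\\
G&=D-A=L+U.
\end{align*}
Write $b=(1+1/\alpha)d+\phi_1$ and $c=(1-\alpha)d/\alpha+\phi_1$, so that $s_{\alpha,\beta}=|b|-|c|$. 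Since $\rho(r_{\alpha,\beta}|L|+2|U|)\ge 0$, the hypothesis gives $s_{\alpha,\beta}>0$, hence $|b|>0$. Therefore $B$, being lower triangular with constant diagonal $b$, is nonsingular.

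\textbf{Step 2: bound $|B^{-1}|$.} The comparison matrix $F:=\langle B\rangle=|b|I-|\beta/\alpha+\phi_2|\,|L|$ is a lower triangular $Z$-matrix with positive diagonal. Its inverse is a finite Neumann series in the nilpotent matrix $|L|$ with nonnegative terms, so $F^{-1}\ge0$ and $F$ is a nonsingular $M$-matrix. Thus $B$ is an $H$-matrix, and Lemma~\ref{lem:inverse-comparison} yields $|B^{-1}|\le F^{-1}$. Using $T\le|B^{-1}|(|C|+|G|)$, the triangle inequality, and the fact that the diagonal and off-diagonal parts have disjoint supports, I obtain
\[
T\le F^{-1}\widetilde G,\qquad \widetilde G:=|c|\,I+\Bigl(1+\Bigl|\tfrac{\alpha-\beta}{\alpha}-\phi_2\Bigr|\Bigr)|L|+2|U|\ge0 .
\]

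\textbf{Step 3: spectral radius.} By construction
\[
F-\widetilde G=(|b|-|c|)I-\bigl(r_{\alpha,\beta}|L|+2|U|\bigr)=s_{\alpha,\beta}I-H,\qquad H\ge0 .
\]
By Lemma~\ref{lem:m-matrix-characterization}, the hypothesis $s_{\alpha,\beta}>\rho(H)$ makes $F-\widetilde G$ a nonsingular $M$-matrix. Since $F^{-1}\ge0$ and $\widetilde G\ge0$, this is a regular splitting, so Lemma~\ref{lem:regular-splitting} gives $\rho(F^{-1}\widetilde G)<1$. Lemma~\ref{lem:spectral-monotonicity} applied to $0\le T\le F^{-1}\widetilde G$ then gives $\rho(T)<1$. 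Theorem~\ref{thm:p-matrix-general} now yields convergence for every $x^{(0)}$ to the unique solution of $\operatorname{LCP}(q,A)$.

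The main point needing care is Step 2. One must check that $F$ is a genuine $M$-matrix with $F^{-1}\ge0$; this relies on triangularity and on $|b|>0$, which is extracted from the hypothesis in Step 1. One must also check that the coefficient bookkeeping in $\widetilde G$ reproduces exactly $r_{\alpha,\beta}$ and $s_{\alpha,\beta}$: the term $|\beta/\alpha+\phi_2|$ is moved from $F$ into $H$, and the term $|c|$ is moved from $\widetilde G$ into the diagonal shift.
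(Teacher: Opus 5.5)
Your proposal is correct and follows the paper's proof essentially step for step. It uses the same $B$, $C$, $G$, and the same comparison matrix $\langle B\rangle=\xi I-\eta|L|$ to bound $|B^{-1}|$ via Lemma~\ref{lem:inverse-comparison}. It also uses the same majorant $pI+\tau|L|+2|U|$ for $|C|+|G|$, and the same regular-splitting argument through Lemmas~\ref{lem:m-matrix-characterization}, \ref{lem:regular-splitting} and \ref{lem:spectral-monotonicity} before invoking Theorem~\ref{thm:p-matrix-general}. Your only departures are cosmetic: you justify $\langle B\rangle^{-1}\geq0$ explicitly by a Neumann series, and you merge the paper's two monotonicity steps into one.
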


\begin{proof}
	For the above AOR splitting, we have
	$
	A=M-N=D-L-U.
	$
	Using $\Omega_1=\phi_1I$, $\Omega_2=\phi_2L$, and $\gamma\Omega_4=D$, the
	matrices $B$, $C$, and $G$ become
	\begin{equation*}
		B
		=
		\left[\left(1+\frac{1}{\alpha}\right)d+\phi_1\right]I
		-
		\left(\frac{\beta}{\alpha}+\phi_2\right)L,
	\end{equation*}
	\begin{equation*}
		C
		=
		\left(\frac{1-\alpha}{\alpha}d+\phi_1\right)I
		+
		\left(\frac{\alpha-\beta}{\alpha}-\phi_2\right)L
		+
		U,
	\end{equation*}
	and
	$
	G=D-A=L+U.
	$
	Set
	$
	\xi=\left|\left(1+\frac{1}{\alpha}\right)d+\phi_1\right|,
	$
	$
	\eta=\left|\frac{\beta}{\alpha}+\phi_2\right|,
	$
	$
	p=\left|\frac{1-\alpha}{\alpha}d+\phi_1\right|
	$
	and
	$
	\tau=1+\left|\frac{\alpha-\beta}{\alpha}-\phi_2\right|.
	$
	Then
	$
	s_{\alpha,\beta}=\xi-p
	$
	and
	$
	r_{\alpha,\beta}=\eta+\tau.
	$
	
	The assumed condition implies $s_{\alpha,\beta}>0$, and hence
	$\xi>p\geq0$. Therefore $B$ has nonzero diagonal entries. Since $L$ is
	strictly lower triangular, $B$ is nonsingular.
	
	Moreover, using the disjoint sparsity patterns of $I$, $L$, and $U$, we have
	\begin{equation*}
		|C|+|G|
		\leq
		pI+\tau |L|+2|U|.
	\end{equation*}
	Let
	$
	\widetilde{B}=\xi I-\eta |L|
	$
	and
	$
	\widetilde{C}=pI+\tau |L|+2|U|.
	$
	Then $\widetilde{B}=\langle B\rangle$. Since $|L|$ is strictly lower
	triangular and $\xi>0$, $\widetilde{B}$ is a nonsingular $M$-matrix and
	$\widetilde{B}^{-1}\geq0$. By Lemma~\ref{lem:inverse-comparison},
	$
	|B^{-1}|\leq \widetilde{B}^{-1}.
	$
	Thus
	$
	0\leq |B^{-1}|(|C|+|G|)
	\leq
	\widetilde{B}^{-1}\widetilde{C}.
	$
	
	Now
	\begin{equation*}
		\widetilde{B}-\widetilde{C}
		=
		(\xi-p)I-(\eta+\tau)|L|-2|U|
		=
		s_{\alpha,\beta}I-r_{\alpha,\beta}|L|-2|U|.
	\end{equation*}
	Since $r_{\alpha,\beta}|L|+2|U|\geq0$ and
	$
	s_{\alpha,\beta}>\rho(r_{\alpha,\beta}|L|+2|U|),
	$
	Lemma~\ref{lem:m-matrix-characterization} implies that
	$\widetilde{B}-\widetilde{C}$ is a nonsingular $M$-matrix. Hence, by
	Lemma~\ref{lem:regular-splitting},
	$
	\rho(\widetilde{B}^{-1}\widetilde{C})<1.
	$
	Using Lemma~\ref{lem:spectral-monotonicity}, we obtain
	$
	\rho(|B^{-1}|(|C|+|G|))<1.
	$
	Since
	$
	|B^{-1}C|+|B^{-1}G|
	\leq
	|B^{-1}|(|C|+|G|),
	$
	another application of Lemma~\ref{lem:spectral-monotonicity} gives
	$
	\rho(|B^{-1}C|+|B^{-1}G|)<1.
	$
	The conclusion follows from Theorem~\ref{thm:p-matrix-general}.
\end{proof}

\begin{remark}
	Theorem~\ref{thm:aor-general} includes the following special cases:
	\begin{enumerate}
		\item $(\alpha,\beta)=(1,0)$: Jacobi-type proposed method, $M=D$ and
		$N=L+U$;
		\item $(\alpha,\beta)=(1,1)$: Gauss-Seidel-type proposed method,
		$M=D-L$ and $N=U$;
		\item $ \beta=\alpha \neq 0$: SOR-type proposed method,
		$M=\alpha^{-1}D-L$ and $N=(\alpha^{-1}-1)D+U$.
	\end{enumerate}
	The corresponding sufficient convergence conditions are obtained by
	substituting these values of $\alpha$ and $\beta$ into
	$s_{\alpha,\beta}$ and $r_{\alpha,\beta}$.
\end{remark}

\begin{remark}
	The condition
	$
	s_{\alpha,\beta}>\rho(r_{\alpha,\beta}|L|+2|U|)
	$
	also guarantees the nonsingularity of $B$. Indeed, it implies
	$s_{\alpha,\beta}>0$ and hence
	$
	\left(1+\frac{1}{\alpha}\right)d+\phi_1\neq0.
	$
	Since $B$ is lower triangular with this quantity on its diagonal, $B$ is
	nonsingular. In the Gauss-Seidel case $(\alpha,\beta)=(1,1)$, this
	nonsingularity requirement reduces to
	$
	2d+\phi_1\neq0.
	$
\end{remark}

Since the spectral radius is bounded above by any induced matrix norm, we obtain
the following simple sufficient condition.

\begin{corollary}
	\label{cor:norm-aor}
	Under the assumptions of Theorem~\ref{thm:aor-general}, let
	$\|\cdot\|$ be any induced matrix norm. If
	$
	s_{\alpha,\beta}>
	\left\|r_{\alpha,\beta}|L|+2|U|\right\|,
	$
	then the proposed AOR-type method converges to the unique solution of
	$\operatorname{LCP}(q,A)$ for every initial vector
	$x^{(0)}\in\mathbb{R}^n$.
\end{corollary}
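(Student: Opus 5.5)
The plan is to reduce the corollary directly to Theorem~\ref{thm:aor-general}. Set $H=r_{\alpha,\beta}|L|+2|U|$. By definition $r_{\alpha,\beta}\geq 1>0$, so $H$ is a nonnegative matrix. The one fact needed is the standard bound $\rho(X)\leq\|X\|$, valid for every square matrix $X$ and every induced matrix norm. To verify it, take an eigenpair $Xv=\lambda v$ with $v\neq0$ and $|\lambda|=\rho(X)$. If $\lambda$ is complex, either work with the complexified norm, or apply the bound to $X^k$ and use Gelfand's formula. Then $|\lambda|\,\|v\|=\|Xv\|\leq\|X\|\,\|v\|$, and dividing by $\|v\|$ gives the bound. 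This is the same fact already used in the remark following Theorem~\ref{thm:p-matrix-general}.

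Combining this with the hypothesis gives the chain
\[
s_{\alpha,\beta}>\bigl\|r_{\alpha,\beta}|L|+2|U|\bigr\|\geq\rho\bigl(r_{\alpha,\beta}|L|+2|U|\bigr).
\]
This is exactly the hypothesis of Theorem~\ref{thm:aor-general}. All remaining assumptions of that theorem hold because the corollary assumes them:
\begin{itemize}
\item $A$ is a $P$-matrix with $D=dI$ and $d>0$;
\item the splitting is the AOR splitting;
\item $\gamma\Omega_4=D$, $\Omega_1=\phi_1I$ and $\Omega_2=\phi_2L$.
\end{itemize}
Theorem~\ref{thm:aor-general} therefore yields convergence to the unique solution of $\operatorname{LCP}(q,A)$ for every initial vector $x^{(0)}$.

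No step is a real obstacle. The only point to check is the complex-eigenvalue issue in $\rho(X)\leq\|X\|$ for a real matrix $X$ under a real induced norm. Gelfand's formula $\rho(X)=\lim_k\|X^k\|^{1/k}$, combined with submultiplicativity $\|X^k\|\leq\|X\|^k$, handles it. Since $H\geq0$, one can avoid this entirely: Perron--Frobenius supplies a real nonnegative eigenvector for the eigenvalue $\rho(H)$, and the direct argument applies.
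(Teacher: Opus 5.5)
Your proposal is correct and follows the paper's argument: the paper derives the corollary in one line from $\rho(X)\le\|X\|$ for induced norms, which turns the hypothesis into $s_{\alpha,\beta}>\rho\bigl(r_{\alpha,\beta}|L|+2|U|\bigr)$, and then applies Theorem~\ref{thm:aor-general}. Your handling of possibly complex eigenvectors, via Gelfand's formula or the nonnegative Perron--Frobenius eigenvector of the nonnegative matrix $r_{\alpha,\beta}|L|+2|U|$, is a valid detail that the paper leaves implicit.
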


The following result provides a useful spectral-radius identity for certain
structured lower and upper triangular matrix pairs.

\begin{lemma}
	\label{lem:spectral-scaling}
	Let $L$ and $U$ be strictly lower and strictly upper triangular matrices,
	respectively. For $a,b>0$, the identity
	\begin{equation}
		\rho(a|L|+b|U|)
		=
		\sqrt{ab}\,\rho(|L|+|U|)
		\label{eq:spectral-scaling}
	\end{equation}
	holds if either of the following conditions is satisfied:
	\begin{enumerate}
		\item There exists a mapping
		$\ell:\{1,2,\ldots,n\}\to\mathbb{Z}$ such that, for all $i,j$,
		$
		L_{ij}\neq0 \Longrightarrow \ell(i)-\ell(j)=1
		$
		and
		$
		U_{ij}\neq0 \Longrightarrow \ell(i)-\ell(j)=-1.
		$
		
		\item The matrices $|L|$ and $|U|$ have a common block upper triangular
		structure with respect to the same block partition, and every pair of
		corresponding diagonal blocks $|L_i|$ and $|U_i|$ satisfies
		$
		\rho(a|L_i|+b|U_i|)
		=
		\sqrt{ab}\,\rho(|L_i|+|U_i|).
		$
	\end{enumerate}
\end{lemma}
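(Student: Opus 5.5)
For condition 1, the plan is a diagonal similarity transformation. Given the level map $\ell$, set $t=\sqrt{a/b}>0$ and define the positive diagonal matrix $\Delta=\operatorname{diag}(t^{\ell(1)},\ldots,t^{\ell(n)})$. For any matrix $X$, the entries satisfy $(\Delta X\Delta^{-1})_{ij}=t^{\ell(i)-\ell(j)}X_{ij}$. Every nonzero entry of $|L|$ has $\ell(i)-\ell(j)=1$, and every nonzero entry of $|U|$ has $\ell(i)-\ell(j)=-1$. Hence
\[
\Delta\bigl(|L|+|U|\bigr)\Delta^{-1}=t|L|+t^{-1}|U|.
\]
Multiplying by $\sqrt{ab}$ gives $\sqrt{ab}\,t=a$ and $\sqrt{ab}\,t^{-1}=b$, so
\[
\sqrt{ab}\,\Delta\bigl(|L|+|U|\bigr)\Delta^{-1}=a|L|+b|U|.
\]
Similar matrices have the same spectrum, so $\rho(a|L|+b|U|)=\sqrt{ab}\,\rho(|L|+|U|)$. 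Because $L$ is strictly lower and $U$ strictly upper triangular, their nonzero patterns are disjoint. The entrywise scaling therefore applies to each part separately without interference; this is the only point that needs checking in this case.

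For condition 2, the plan is to use the fact that the spectrum of a block upper triangular matrix is the union of the spectra of its diagonal blocks. With respect to the common partition, both $a|L|+b|U|$ and $|L|+|U|$ are block upper triangular, and their diagonal blocks are $a|L_i|+b|U_i|$ and $|L_i|+|U_i|$ respectively. Therefore
\[
\rho(a|L|+b|U|)=\max_i\rho(a|L_i|+b|U_i|)=\max_i\sqrt{ab}\,\rho(|L_i|+|U_i|)=\sqrt{ab}\,\rho(|L|+|U|),
\]
where the middle equality uses the blockwise hypothesis. The block structure is a property of the nonzero pattern of $|L|$ and $|U|$ together, so it is preserved under positive scalings of the two parts.

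The main obstacle is interpretive rather than technical: one has to read ``common block upper triangular structure'' so that the sum $a|L|+b|U|$ is block upper triangular with exactly these diagonal blocks. Under that reading both cases are short. I would also note that the block hypothesis holds in particular when each diagonal block satisfies condition 1, for example tridiagonal blocks with $\ell(i)=i$.
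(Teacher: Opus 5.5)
Your proposal is correct and follows the paper's proof. For condition 1 the paper uses the same diagonal matrix $\operatorname{diag}(t^{\ell(1)},\ldots,t^{\ell(n)})$ with $t=\sqrt{a/b}$, only conjugating in the opposite direction to get $P^{-1}(a|L|+b|U|)P=\sqrt{ab}\,(|L|+|U|)$, and for condition 2 it uses the same block-triangular spectrum argument. Your remark about disjoint sparsity patterns is harmless but unnecessary, since conjugation is linear and acts on $|L|$ and $|U|$ separately anyway.
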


\begin{proof}
	For case $(1)$, let $t=\sqrt{a/b}$ and define
	$
	P=\operatorname{diag}(t^{\ell(1)},t^{\ell(2)},\ldots,t^{\ell(n)}).
	$
	The matrix $P$ is nonsingular because $t>0$. If $L_{ij}\neq0$, then
	$|L|_{ij}\neq0$ and $\ell(i)-\ell(j)=1$. Hence
	$
	(P^{-1}|L|P)_{ij}
	=
	t^{-\ell(i)}|L|_{ij}t^{\ell(j)}
	=
	t^{-1}|L|_{ij}.
	$
	Thus $P^{-1}|L|P=t^{-1}|L|$. Similarly, if $U_{ij}\neq0$, then
	$|U|_{ij}\neq0$ and $\ell(i)-\ell(j)=-1$, and therefore
	$
	(P^{-1}|U|P)_{ij}
	=
	t^{-\ell(i)}|U|_{ij}t^{\ell(j)}
	=
	t|U|_{ij}.
	$
	Hence $P^{-1}|U|P=t|U|$. Consequently,
	\begin{equation*}
		P^{-1}(a|L|+b|U|)P
		=
		at^{-1}|L|+bt|U|
		=
		\sqrt{ab}(|L|+|U|).
	\end{equation*}
	Therefore, $a|L|+b|U|$ is similar to
	$\sqrt{ab}(|L|+|U|)$, which proves \eqref{eq:spectral-scaling}.
	
	For case $(2)$, both $a|L|+b|U|$ and $|L|+|U|$ are block upper triangular with
	diagonal blocks $a|L_i|+b|U_i|$ and $|L_i|+|U_i|$, respectively. Since the
	eigenvalues of a block upper triangular matrix are determined by its diagonal
	blocks, we have
	\begin{equation*}
		\rho(a|L|+b|U|)
		=
		\max_i\rho(a|L_i|+b|U_i|)
		=
		\sqrt{ab}\max_i\rho(|L_i|+|U_i|)
		=
		\sqrt{ab}\,\rho(|L|+|U|).
	\end{equation*}
\end{proof}

\begin{remark}
	Examples~\ref{Ex1}, \ref{Ex2}, and \ref{Ex4} satisfy condition $(1)$ of
	Lemma~\ref{lem:spectral-scaling}. For an $m\times m$ grid, if the index
	$k$ corresponds to the point $(i,j)$ through $k=(i-1)m+j$, then one may
	choose
	$
	\ell(k)=i+j-2.
	$
	Example~\ref{Ex3} satisfies condition $(2)$. With respect to the natural
	block partition, the matrices $|L|$ and $|U|$ have a common block upper
	triangular structure. The corresponding diagonal block pairs satisfy
	condition $(1)$, while the additional first and second block super-diagonal
	terms occur only in off-diagonal blocks and therefore do not affect the
	eigenvalues. Hence, for all four examples,
	$
	\rho(a|L|+b|U|)=\kappa_m\sqrt{ab},
~
	\kappa_m=\rho(|L|+|U|).
	$
\end{remark}

\begin{remark}
	For the test problems in Section~\ref{sec5}, the coefficient matrices are
	written as
	$
	A=D-L-U,
	$
	where
	$
	L=-\operatorname{tril}(A,-1),~
	U=-\operatorname{triu}(A,1).
	$
	Since the off-diagonal entries of these matrices are non-positive, we have
	$L\geq0$ and $U\geq0$. Hence, for these examples,
	$
	|L|=L,~ |U|=U,
	$
	and therefore
	$
	\kappa_m=\rho(|L|+|U|)=\rho(L+U).
	$
\end{remark}

Using Lemma~\ref{lem:spectral-scaling}, the condition in
Theorem~\ref{thm:aor-general} can be expressed as explicit intervals for the
parameters $\phi_1$ and $\phi_2$.

\begin{theorem}
	\label{thm:parameter-interval}
	Suppose that the assumptions of Theorem~\ref{thm:aor-general} hold and that
	the pair \((|L|,|U|)\) satisfies the spectral-scaling property in
	Lemma~\ref{lem:spectral-scaling}. Let
	$\kappa_m:=\rho(|L|+|U|)>0$ and assume that $d>\kappa_m$. For fixed
	$\alpha>0$ and $\beta\in\mathbb{R}$, the proposed AOR-type method converges
	for every initial vector $x^{(0)}\in\mathbb{R}^n$ if either of the following
	conditions holds:
	\begin{enumerate}
		\item
		\begin{equation}
			\kappa_m-\frac{d}{\alpha}
			<\phi_1<
			d\left(1-\frac{1}{\alpha}\right),
			\label{eq:phi1-case-a}
		\end{equation}
		and
		\begin{equation}
			1-\frac{(d/\alpha+\phi_1)^2}{\kappa_m^2}
			-\frac{\beta}{\alpha}
			<\phi_2<
			\frac{(d/\alpha+\phi_1)^2}{\kappa_m^2}
			-\frac{\beta}{\alpha}.
			\label{eq:phi2-case-a}
		\end{equation}
		
		\item
		\begin{equation}
			\phi_1\geq d\left(1-\frac{1}{\alpha}\right),
			\label{eq:phi1-case-b}
		\end{equation}
		and
		\begin{equation}
			1-\frac{d^2}{\kappa_m^2}-\frac{\beta}{\alpha}
			<\phi_2<
			\frac{d^2}{\kappa_m^2}-\frac{\beta}{\alpha}.
			\label{eq:phi2-case-b}
		\end{equation}
	\end{enumerate}
\end{theorem}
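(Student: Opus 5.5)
The plan is to reduce everything to the sufficient condition of Theorem~\ref{thm:aor-general}, namely $s_{\alpha,\beta}>\rho(r_{\alpha,\beta}|L|+2|U|)$. We make that condition explicit with Lemma~\ref{lem:spectral-scaling} and then solve the resulting scalar inequalities for $\phi_1$ and $\phi_2$.

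Since $r_{\alpha,\beta}\geq 1>0$ and $2>0$, the spectral-scaling property applies with $a=r_{\alpha,\beta}$ and $b=2$. It gives
\[
\rho(r_{\alpha,\beta}|L|+2|U|)=\kappa_m\sqrt{2r_{\alpha,\beta}} .
\]
So it suffices to show that, under either set of hypotheses, $s_{\alpha,\beta}>\kappa_m\sqrt{2r_{\alpha,\beta}}$. In particular this needs $s_{\alpha,\beta}>0$, and then the condition is equivalent to $s_{\alpha,\beta}^2>2\kappa_m^2 r_{\alpha,\beta}$.

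\textbf{Simplifying the two quantities.} Set $u=d/\alpha+\phi_1$ and $v=\beta/\alpha+\phi_2$. Then
\[
s_{\alpha,\beta}=|u+d|-|u-d|,\qquad r_{\alpha,\beta}=1+|v|+|1-v|.
\]
Condition \eqref{eq:phi1-case-a} is exactly $\kappa_m<u<d$. In that range $u+d>0$ and $u-d<0$, so $s_{\alpha,\beta}=(d+u)-(d-u)=2u>0$. Condition \eqref{eq:phi1-case-b} is exactly $u\geq d$, which gives $s_{\alpha,\beta}=2d>0$.

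With $c:=u^2/\kappa_m^2$ in case (1) and $c:=d^2/\kappa_m^2$ in case (2), the target inequality $s_{\alpha,\beta}^2>2\kappa_m^2 r_{\alpha,\beta}$ becomes in both cases
\[
|v|+|1-v|<2c-1 .
\]
Moreover $c>1$: in case (1) because $u>\kappa_m$, and in case (2) because of the standing assumption $d>\kappa_m$.

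\textbf{The scalar inequality in $v$.} Split into three ranges.
\begin{itemize}
\item For $0\leq v\leq 1$ the left side equals $1$, and $1<2c-1$ holds automatically since $c>1$.
\item For $v>1$ it reads $2v-1<2c-1$, that is, $v<c$.
\item For $v<0$ it reads $1-2v<2c-1$, that is, $v>1-c$.
\end{itemize}
Hence the inequality holds exactly when $1-c<v<c$. Substituting $v=\beta/\alpha+\phi_2$ and the two values of $c$ reproduces \eqref{eq:phi2-case-a} and \eqref{eq:phi2-case-b}. Theorem~\ref{thm:aor-general}, whose $P$-matrix and structural hypotheses are assumed, then yields convergence for every initial vector.

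\textbf{Main obstacle.} The only delicate point is the case analysis for the absolute values in $s_{\alpha,\beta}$. One must check that the $\phi_1$-ranges give $s_{\alpha,\beta}>0$, so that squaring is legitimate, and that $c>1$, so that the $\phi_2$-interval is nonempty and contains $[0,1]$ for $v$. Both facts hinge on the lower bound $\phi_1>\kappa_m-d/\alpha$ in case (1) and on $d>\kappa_m$ in case (2).
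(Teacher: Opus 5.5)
Your proof is correct and follows essentially the same route as the paper. You apply the spectral-scaling identity with $a=r_{\alpha,\beta}$, $b=2$, check $s_{\alpha,\beta}>0$ so the inequality can be squared, evaluate $s_{\alpha,\beta}$ by casework on the absolute values, and solve the piecewise-linear inequality in $\beta/\alpha+\phi_2$. The only difference is the order: you read off $s_{\alpha,\beta}$ from the $\phi_1$-hypotheses first and then solve for $\phi_2$ in each case, whereas the paper first derives the $\phi_2$-interval for a general $s_{\alpha,\beta}$ and then splits on $\theta$, so yours is a slightly more direct verification of sufficiency.
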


\begin{proof}
	Define
	$
	\theta=\frac{1-\alpha}{\alpha}d+\phi_1
	$
	and
	$
	y=\frac{\beta}{\alpha}+\phi_2.
	$
	Then
	$
	s_{\alpha,\beta}=|\theta+2d|-|\theta|
	$
	and
	$
	r_{\alpha,\beta}=|y|+1+|1-y|.
	$
	
	By Lemma~\ref{lem:spectral-scaling},
	$
	\rho(r_{\alpha,\beta}|L|+2|U|)
	=
	\kappa_m\sqrt{2r_{\alpha,\beta}}.
	$
	Hence, the condition of Theorem~\ref{thm:aor-general} is satisfied whenever
	$
	s_{\alpha,\beta}>
	\kappa_m\sqrt{2r_{\alpha,\beta}}.
	$
	
	Notice that $r_{\alpha,\beta}\geq2$ and $\kappa_m>0$. Therefore, the
	right-hand side of the preceding inequality is positive, and the
	inequality implies $s_{\alpha,\beta}>0$. It may consequently be squared
	without changing its direction. Thus, it is enough to require
	\begin{equation}
		r_{\alpha,\beta}
		<
		\frac{s_{\alpha,\beta}^{2}}{2\kappa_m^{2}}.
		\label{eq:r-bound-cor}
	\end{equation}
	
	The minimum value of $r_{\alpha,\beta}=|y|+1+|1-y|$ is $2$. Since the
	inequality in \eqref{eq:r-bound-cor} is strict, admissible values of $y$
	can exist only when $s_{\alpha,\beta}>2\kappa_m$. Set
	$
	c=s_{\alpha,\beta}^{2}/(2\kappa_m^{2}).
	$
	Then $c>2$.
	
	We now solve $|y|+1+|1-y|<c$. If $y<0$, the left-hand side is $2-2y$,
	and hence $y>1-c/2$. For $0\leq y\leq1$, it is equal to $2$, so the
	inequality holds because $c>2$. If $y>1$, it is equal to $2y$, giving
	$y<c/2$. Combining the three cases gives
	$
	1-c/2<y<c/2.
	$
	Since $y=\beta/\alpha+\phi_2$, we obtain
	\begin{equation}
		1-\frac{s_{\alpha,\beta}^{2}}{4\kappa_m^{2}}
		-\frac{\beta}{\alpha}
		<\phi_2<
		\frac{s_{\alpha,\beta}^{2}}{4\kappa_m^{2}}
		-\frac{\beta}{\alpha}.
		\label{eq:general-phi2-interval}
	\end{equation}
	
	It remains to express the condition $s_{\alpha,\beta}>2\kappa_m$ in
	terms of $\phi_1$. The value of $s_{\alpha,\beta}$ depends on the
	position of $\theta$ relative to $-2d$ and $0$. If
	$\theta\leq-2d$, then $s_{\alpha,\beta}=-2d<0$, so this case cannot
	satisfy the convergence condition. Hence, only the following two cases
	need to be considered.
	
	First, let $-2d<\theta<0$. Direct evaluation of the absolute values gives
	$
	s_{\alpha,\beta}=2(d+\theta)=2(d/\alpha+\phi_1).
	$
	Therefore, $s_{\alpha,\beta}>2\kappa_m$ is equivalent to
	$
	\phi_1>\kappa_m-d/\alpha.
	$
	Moreover, $\theta<0$ is equivalent to
	$
	\phi_1<d(1-1/\alpha).
	$
	Thus,
	\begin{equation*}
		\kappa_m-\frac{d}{\alpha}
		<\phi_1<
		d\left(1-\frac{1}{\alpha}\right).
	\end{equation*}
	The omitted inequality $\theta>-2d$ follows automatically from the lower
	bound above, since it gives $\theta>\kappa_m-d>-2d$. Substituting
	$s_{\alpha,\beta}=2(d/\alpha+\phi_1)$ into
	\eqref{eq:general-phi2-interval} gives \eqref{eq:phi2-case-a}.
	
	Second, let $\theta\geq0$. In this case,
	$
	s_{\alpha,\beta}=2d.
	$
	The condition $\theta\geq0$ is equivalent to
	$
	\phi_1\geq d(1-1/\alpha),
	$
	while $s_{\alpha,\beta}>2\kappa_m$ is exactly $d>\kappa_m$, which is
	already assumed. Substituting $s_{\alpha,\beta}=2d$ into
	\eqref{eq:general-phi2-interval} gives \eqref{eq:phi2-case-b}.
	
	Therefore, each of the two stated sets of parameter conditions implies the
	sufficient condition of Theorem~\ref{thm:aor-general}. Hence, the proposed
	AOR-type method converges for every initial vector.
\end{proof}

\begin{remark}
	For $(\alpha,\beta)=(1,1)$, the AOR splitting reduces to the
	Gauss-Seidel splitting $M=D-L$ and $N=U$. Therefore, assuming
	$d>\kappa_m$, the proposed Gauss-Seidel-type method RSMGS converges if
	$(\phi_1,\phi_2)$ satisfies either of the following cases:
	\begin{equation*}
		\begin{cases}
			\kappa_m-d<\phi_1<0,\\[1mm]
			-\dfrac{(d+\phi_1)^2}{\kappa_m^2}
			<\phi_2<
			\dfrac{(d+\phi_1)^2}{\kappa_m^2}-1,
		\end{cases}
		\qquad\text{or}\qquad
		\begin{cases}
			\phi_1\geq0,\\[1mm]
			-\dfrac{d^2}{\kappa_m^2}
			<\phi_2<
			\dfrac{d^2}{\kappa_m^2}-1.
		\end{cases}
	\end{equation*}
	where $\kappa_m=\rho(|L|+|U|)$.
\end{remark}

We next give a convergence result for the proposed method when the coefficient
matrix belongs to the class of $H_{+}$-matrices. The condition is stated in a weighted componentwise form, which gives a directly verifiable sufficient criterion.
criterion.

\begin{theorem}
	\label{thm:Hplus-weighted}
	Let $A\in\mathbb{R}^{n\times n}$ be an $H_{+}$-matrix and let $A=M-N$ be a
	splitting of $A$. Let $\gamma>0$, let $\Omega_4$ be a positive diagonal
	matrix, and define
	$
	B=M+\Omega_1-\Omega_2+\gamma\Omega_4,
	$
	$
	C=N+\Omega_1-\Omega_2
	$
	and
	$
	G=\gamma\Omega_4-A.
	$
	Assume that $B$ is an $H$-matrix. If there exists a vector $w>0$ such that
	\begin{equation}
		(|C|+|G|)w<\langle B\rangle w,
		\label{eq:Hplus-weighted-condition}
	\end{equation}
	then the proposed method converges to the unique solution of
	$\operatorname{LCP}(q,A)$ for every initial vector
	$x^{(0)}\in\mathbb{R}^n$.
\end{theorem}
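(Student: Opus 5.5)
The plan is to reduce the statement to Theorem~\ref{thm:p-matrix-general}, following the same mechanism as in the proof of Theorem~\ref{thm:aor-general}.

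\textbf{Uniqueness and the fixed point.} An $H_+$-matrix is a $P$-matrix: $\langle A\rangle$ is a nonsingular $M$-matrix and $A$ has positive diagonal, so the standard argument applies (every real eigenvalue of each principal submatrix is positive). Hence $\operatorname{LCP}(q,A)$ has a unique solution. Since $B$ is assumed to be an $H$-matrix, Lemma~\ref{lem:inverse-comparison} gives that $B$ is nonsingular and
\[
|B^{-1}|\leq\langle B\rangle^{-1}.
\]
The hypotheses of Theorem~\ref{thm:p-matrix-general} are therefore met apart from the spectral condition \eqref{eq:p-matrix-general-condition}.

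\textbf{Bounding the iteration matrix.} As in the earlier remark,
\[
0\leq |B^{-1}C|+|B^{-1}G|\leq |B^{-1}|(|C|+|G|)\leq \langle B\rangle^{-1}(|C|+|G|).
\]
By Lemma~\ref{lem:spectral-monotonicity}, it then suffices to prove
\[
\rho\bigl(\langle B\rangle^{-1}(|C|+|G|)\bigr)<1.
\]

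\textbf{The key step.} Set $E=\langle B\rangle-(|C|+|G|)$.
\begin{itemize}
\item $E$ is a $Z$-matrix: $\langle B\rangle$ has nonpositive off-diagonal entries and $|C|+|G|\geq0$.
\item Condition \eqref{eq:Hplus-weighted-condition} says exactly that $Ew>0$ for some $w>0$, so Lemma~\ref{lem:M-semipositive} makes $E$ a nonsingular $M$-matrix.
\item The splitting $E=\langle B\rangle-(|C|+|G|)$ is regular, because $\langle B\rangle^{-1}\geq0$ (as $\langle B\rangle$ is a nonsingular $M$-matrix) and $|C|+|G|\geq0$.
\end{itemize}
Lemma~\ref{lem:regular-splitting} then yields $\rho(\langle B\rangle^{-1}(|C|+|G|))<1$, and Theorem~\ref{thm:p-matrix-general} gives convergence of $x^{(k)}$ and of $z^{(k)}$ to the unique solution.

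\textbf{A cleaner alternative.} If one prefers to avoid Lemma~\ref{lem:regular-splitting}, the same conclusion follows from a weighted-norm argument. Let $T=\langle B\rangle^{-1}(|C|+|G|)$. Then
\[
(|C|+|G|)w<\langle B\rangle w
\]
and $\langle B\rangle^{-1}\geq0$ give $Tw\leq w$. To get strict inequality one needs more care, since $\langle B\rangle^{-1}$ may have zero rows only if it is singular, which it is not. Indeed, with $v=\langle B\rangle w-(|C|+|G|)w>0$, we have $w-Tw=\langle B\rangle^{-1}v>0$, because a nonsingular nonnegative inverse has no zero row. Thus $Tw<w$, so $\|T\|_w<1$ in the $w$-weighted max norm, and $\rho(T)<1$.

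\textbf{Main obstacle.} The main subtle point is the first step, namely justifying that $H_+$ implies $P$, so that the solution is unique and Theorem~\ref{thm:p-matrix-general} applies. I would cite this as a standard fact from \cite{BermanPlemmons1994}. The rest is direct bookkeeping with the earlier lemmas.
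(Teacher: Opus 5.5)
Your proof is correct and follows the paper's argument essentially step for step: you show that $\langle B\rangle-(|C|+|G|)$ is a nonsingular $M$-matrix via Lemma~\ref{lem:M-semipositive}, apply Lemma~\ref{lem:regular-splitting} to get $\rho(\langle B\rangle^{-1}(|C|+|G|))<1$, pass to $|B^{-1}C|+|B^{-1}G|$ using $|B^{-1}|\leq\langle B\rangle^{-1}$ and Lemma~\ref{lem:spectral-monotonicity}, and conclude with Theorem~\ref{thm:p-matrix-general} using the fact that $H_+$-matrices are $P$-matrices. Your weighted max-norm alternative is also valid, since $\langle B\rangle^{-1}\geq0$ is nonsingular and hence $w-Tw=\langle B\rangle^{-1}v>0$; it simply bypasses the regular-splitting lemma.
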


\begin{proof}
	Set
	$
	Q=|C|+|G|.
	$
	Then $Q\geq0$. By \eqref{eq:Hplus-weighted-condition},
	$
	(\langle B\rangle-Q)w>0
	$
	for some $w>0$. Since $\langle B\rangle-Q$ is a $Z$-matrix,
	Lemma~\ref{lem:M-semipositive} implies that $\langle B\rangle-Q$ is a
	nonsingular $M$-matrix.
	
	Moreover, because $B$ is an $H$-matrix, $\langle B\rangle$ is a
	nonsingular $M$-matrix and hence $\langle B\rangle^{-1}\geq0$. Therefore,
	$\langle B\rangle-Q$ is a regular splitting with splitting matrix
	$\langle B\rangle$ and iteration matrix $\langle B\rangle^{-1}Q$. By
	Lemma~\ref{lem:regular-splitting}, we obtain
	\begin{equation*}
		\rho(\langle B\rangle^{-1}Q)<1.
	\end{equation*}
	
	Since $B$ is an $H$-matrix, Lemma~\ref{lem:inverse-comparison} gives
	$
	|B^{-1}|\leq \langle B\rangle^{-1}.
	$
	Therefore,
	$
	0\leq |B^{-1}|Q\leq \langle B\rangle^{-1}Q.
	$
	Using Lemma~\ref{lem:spectral-monotonicity}, we get
	$
	\rho(|B^{-1}|Q)<1.
	$
	
	Finally, since
	$
	|B^{-1}C|+|B^{-1}G|
	\leq
	|B^{-1}|(|C|+|G|)
	=
	|B^{-1}|Q,
	$
	another application of Lemma~\ref{lem:spectral-monotonicity} yields
	$
	\rho(|B^{-1}C|+|B^{-1}G|)<1.
	$
	Since every $H_{+}$-matrix is a $P$-matrix, the conclusion follows from
	Theorem~\ref{thm:p-matrix-general}.
\end{proof}

The next corollary gives easily verifiable sufficient conditions under an
$H$-compatible splitting.

\begin{corollary}
	\label{cor:Hplus-structural}
	Let $A=D-E\in\mathbb{R}^{n\times n}$ be an $H_{+}$-matrix, where
	$D=\operatorname{diag}(A)>0$ and $E=D-A$. Let
	$
	A=(M+\Omega_1-\Omega_2)-(N+\Omega_1-\Omega_2)
	$
	be an $H$-compatible splitting, that is,
	\begin{equation}
		\langle A\rangle
		=
		\langle M+\Omega_1-\Omega_2\rangle
		-
		|N+\Omega_1-\Omega_2|.
		\label{eq:H-compatible}
	\end{equation}
	Let $\gamma>0$ and let $\Omega_4$ be a positive diagonal matrix. Assume
	that the diagonal entries of $M+\Omega_1-\Omega_2$ are positive. Then the
	proposed method converges to the unique solution of $\operatorname{LCP}(q,A)$
	for every initial vector $x^{(0)}\in\mathbb{R}^n$ if either of the following
	conditions holds:
	\begin{enumerate}
		\item $\gamma\Omega_4\geq D$;
		\item $0<\gamma\Omega_4<D$ and
		$\langle A\rangle+2\gamma\Omega_4-D-|E|$ is a nonsingular $M$-matrix.
	\end{enumerate}
\end{corollary}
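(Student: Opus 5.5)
The plan is to reduce the statement to Theorem~\ref{thm:Hplus-weighted}. Write $M'=M+\Omega_1-\Omega_2$ and $N'=N+\Omega_1-\Omega_2$, so that $B=M'+\gamma\Omega_4$, $C=N'$ and $G=\gamma\Omega_4-A$. Two things must be checked: that $B$ is an $H$-matrix, and that there is a vector $w>0$ with $(|C|+|G|)w<\langle B\rangle w$. The approach is to compute $\langle B\rangle-|C|-|G|$ explicitly, using the fact that the diagonal and off-diagonal parts separate under $\langle\cdot\rangle$ and $|\cdot|$.

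\textbf{Step 1: $B$ is an $H$-matrix.} Since $M'$ has positive diagonal and $\gamma\Omega_4$ is a positive diagonal matrix, adding $\gamma\Omega_4$ only enlarges the diagonal moduli. Hence $\langle B\rangle=\langle M'\rangle+\gamma\Omega_4$. By the $H$-compatibility \eqref{eq:H-compatible}, $\langle M'\rangle=\langle A\rangle+|N'|\geq\langle A\rangle$. Because $\langle M'\rangle$ is a $Z$-matrix, I will take the positive vector $v$ that Lemma~\ref{lem:M-semipositive} provides for the nonsingular $M$-matrix $\langle A\rangle$. Then $\langle M'\rangle v\geq\langle A\rangle v>0$, and the same lemma shows that $\langle M'\rangle$ is a nonsingular $M$-matrix. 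The same argument then applies to $\langle B\rangle\geq\langle M'\rangle$, so $B$ is an $H$-matrix.

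\textbf{Step 2: the key identity.} Since $\gamma\Omega_4-D$ is diagonal and $E$ has zero diagonal, $|G|=|\gamma\Omega_4-D-(-E)|=|\gamma\Omega_4-D|+|E|$. Combining this with Step 1 and \eqref{eq:H-compatible} gives
\begin{equation*}
\langle B\rangle-|C|-|G|
=\langle M'\rangle-|N'|+\gamma\Omega_4-|\gamma\Omega_4-D|-|E|
=\langle A\rangle+\gamma\Omega_4-|\gamma\Omega_4-D|-|E|.
\end{equation*}
In case (1), $|\gamma\Omega_4-D|=\gamma\Omega_4-D$, so the right-hand side equals $\langle A\rangle+D-|E|=2\langle A\rangle$, which is a nonsingular $M$-matrix. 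In case (2), $|\gamma\Omega_4-D|=D-\gamma\Omega_4$, so the right-hand side is exactly $\langle A\rangle+2\gamma\Omega_4-D-|E|$, which is a nonsingular $M$-matrix by hypothesis.

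\textbf{Step 3: conclusion.} In either case the ``only if'' direction of Lemma~\ref{lem:M-semipositive} yields $w>0$ with $(\langle B\rangle-|C|-|G|)w>0$, which is condition \eqref{eq:Hplus-weighted-condition}. Theorem~\ref{thm:Hplus-weighted} then gives convergence to the unique solution. The main point needing care is Step 2: the splitting of $|G|$ and of $\langle B\rangle$ into diagonal and off-diagonal parts must be justified from the disjoint sparsity patterns, and the positivity of the diagonal of $M'$ is needed so that $\langle M'+\gamma\Omega_4\rangle$ really equals $\langle M'\rangle+\gamma\Omega_4$.
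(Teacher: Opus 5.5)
Your proof is correct and follows the paper's argument. You compute $\langle B\rangle-|C|-|G|$ using $\langle B\rangle=\langle M'\rangle+\gamma\Omega_4$, the $H$-compatibility \eqref{eq:H-compatible}, and the diagonal/off-diagonal separation of $|\gamma\Omega_4-A|$, obtain $2\langle A\rangle$ or the assumed $M$-matrix, and conclude via Lemma~\ref{lem:M-semipositive} and Theorem~\ref{thm:Hplus-weighted}. The only small difference is how you show that $B$ is an $H$-matrix: you use $\langle B\rangle\geq\langle M'\rangle\geq\langle A\rangle$ with a positive vector for $\langle A\rangle$, whereas the paper reuses the same $w$ through $\langle B\rangle w=(\langle B\rangle-Q)w+Qw>0$ with $Q=|C|+|G|$. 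Both arguments are valid.
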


\begin{proof}
	Set
	$
	B=M+\Omega_1-\Omega_2+\gamma\Omega_4,
	$
	$
	C=N+\Omega_1-\Omega_2
	$
	and
	$
	Q=|C|+|\gamma\Omega_4-A|.
	$
	Since the diagonal entries of $M+\Omega_1-\Omega_2$ are positive and
	$\gamma\Omega_4$ is diagonal with positive diagonal entries, we have
	$
	\langle B\rangle
	=
	\langle M+\Omega_1-\Omega_2\rangle+\gamma\Omega_4.
	$
	
	First suppose that $\gamma\Omega_4\geq D$. Since $A=D-E$ and the diagonal
	and off-diagonal parts have disjoint supports, we have
	$
	|\gamma\Omega_4-A|=\gamma\Omega_4-D+|E|.
	$
	Using \eqref{eq:H-compatible}, we obtain
	$
	\langle B\rangle-Q=2\langle A\rangle.
	$
	Since $A$ is an $H_{+}$-matrix, $\langle A\rangle$ is a nonsingular
	$M$-matrix. Hence $\langle B\rangle-Q$ is a nonsingular $M$-matrix.
	
	Next suppose that $0<\gamma\Omega_4<D$. Then
	$
	|\gamma\Omega_4-A|=D-\gamma\Omega_4+|E|,
	$
	and therefore
	$
	\langle B\rangle-Q
	=
	\langle A\rangle+2\gamma\Omega_4-D-|E|.
	$
	By assumption, this is a nonsingular $M$-matrix.
	
	Thus, in both cases, $\langle B\rangle-Q$ is a nonsingular $M$-matrix.
	By Lemma~\ref{lem:M-semipositive}, there exists a vector $w>0$ such that
	$
	(\langle B\rangle-Q)w>0.
	$
	Equivalently,
	$
	Qw<\langle B\rangle w.
	$
	
	Also, since $Q\geq0$, we have
	$
	\langle B\rangle w=(\langle B\rangle-Q)w+Qw>0.
	$
	As $\langle B\rangle$ is a $Z$-matrix, Lemma~\ref{lem:M-semipositive}
	implies that $\langle B\rangle$ is a nonsingular $M$-matrix. Therefore,
	$B$ is an $H$-matrix.
	
	Thus, all assumptions of Theorem~\ref{thm:Hplus-weighted} are satisfied,
	and the result follows.
\end{proof}

\begin{remark}
	\label{Rem4.8}
	The preceding results provide admissible intervals for the parameters
	$\phi_1$ and $\phi_2$. For practical computations, it is also useful to
	measure the strength of the sufficient convergence condition. Following the
	comparison matrix used in the proof of Theorem~\ref{thm:aor-general}, define
	$
	\mathcal{R}_{\alpha,\beta}(\phi_1,\phi_2)
	=
	\rho(\widetilde{B}^{-1}\widetilde{C}),
	$
	where
	$
	\widetilde{B}=\xi I-\eta |L|
	$
	and
	$
	\widetilde{C}=pI+\tau |L|+2|U|.
	$
	Here
	\begin{equation*}
		\xi=\left|\left(1+\frac{1}{\alpha}\right)d+\phi_1\right|,\quad
		\eta=\left|\frac{\beta}{\alpha}+\phi_2\right|,\quad
		p=\left|\frac{1-\alpha}{\alpha}d+\phi_1\right|,\quad
		\tau=1+\left|\frac{\alpha-\beta}{\alpha}-\phi_2\right|.
	\end{equation*}
	The quantity $\mathcal{R}_{\alpha,\beta}(\phi_1,\phi_2)$ is called the
	comparison convergence factor. If
	$
	\mathcal{R}_{\alpha,\beta}(\phi_1,\phi_2)<1,
	$
	then the sufficient condition in Theorem~\ref{thm:aor-general} is satisfied.
	
	When the pair $(|L|,|U|)$ satisfies Lemma~\ref{lem:spectral-scaling}, this
	condition can also be checked through the scalar convergence margin
	\begin{equation*}
		\Delta_{\alpha,\beta}(\phi_1,\phi_2)
		=
		s_{\alpha,\beta}
		-
		\kappa_m\sqrt{2r_{\alpha,\beta}},
		\qquad
		\kappa_m=\rho(|L|+|U|).
	\end{equation*}
	Thus, any parameter pair satisfying
	$
	\Delta_{\alpha,\beta}(\phi_1,\phi_2)>0
	$
	belongs to the admissible region described in
	Theorem~\ref{thm:parameter-interval}.
	
	For the Gauss-Seidel-type method $(\alpha,\beta)=(1,1)$, the margin becomes
	\begin{equation*}
		\Delta_{1,1}(\phi_1,\phi_2)
		=
		|2d+\phi_1|-|\phi_1|
		-
		\kappa_m
		\sqrt{
			2\left(|1+\phi_2|+1+|\phi_2|\right)
		}.
	\end{equation*}
	Therefore, the theoretically admissible parameters for the proposed
	RSMGS method are those satisfying
	$
	\Delta_{1,1}(\phi_1,\phi_2)>0.
	$
	
	The margin $\Delta_{\alpha,\beta}$ should be interpreted as a sufficient
	convergence margin, not as the exact convergence rate of the proposed method.
	Since its derivation is based on componentwise absolute-value estimates, it
	may not capture possible cancellations in the actual iteration matrix.
	Therefore, parameters that give better practical performance may differ from
	those suggested by maximizing this margin.
	
	In the numerical section, Algorithm~\ref{algo2} is used to search for
	experimentally effective parameters. The admissible intervals in
	Theorem~\ref{thm:parameter-interval} and the margin
	$\Delta_{\alpha,\beta}(\phi_1,\phi_2)$ provide a theoretical guide for this
	search, while the final parameter pairs are selected according to the observed
	iteration counts and CPU times.
\end{remark}

\section{Experimental Analysis}\label{sec5}

In this section, we examine the numerical performance of the proposed
relaxed shifted matrix-splitting modulus-based Gauss-Seidel method for solving
large-scale sparse $\operatorname{LCP}(q,A)$. The efficiency of the method is
measured by the number of iterations (IT), CPU time in seconds (CPU), and the
final residual (RES). The residual is computed as
$
	\operatorname{RES}(z^{(k)})
	=
	\left\|
	\min\left(Az^{(k)}+q,z^{(k)}\right)
	\right\|_2,
$
where the minimum is taken componentwise. The stopping criterion is
$\operatorname{RES}(z^{(k)})<\varepsilon=10^{-5}$.

For all numerical tests, the initial vector is chosen as
$
	x^{(0)}=(1,0,1,0,\ldots,1,0,\ldots)^T\in\mathbb{R}^n.
$
In the proposed method, we use the Gauss-Seidel splitting $M=D-L$ and $N=U$.
We further take
$
	\Omega_1=\phi_1I,~
	\Omega_2=\phi_2L,~
	\gamma=2,~
	\Omega_4=\frac{1}{2}D.
$
Thus, $\gamma\Omega_4=D$, and Method~\ref{meth:rsmms} becomes
\begin{equation}
	\left(2D+\phi_1I-(1+\phi_2)L\right)x^{(k+1)}
	=
	\left(U+\phi_1I-\phi_2L\right)x^{(k)}
	+
	(D-A)|x^{(k)}|
	-
	2q.
	\label{eq:rsmgs-numerical}
\end{equation}
This scheme is denoted by \textbf{RSMGS}. The choice $\gamma=2$ and
$\Omega_4=\frac{1}{2}D$ is consistent with commonly used modulus-based
Gauss-Seidel-type methods and has been observed to give good practical
convergence behavior; see \cite{Xu2015,RenWangTangWang2019,Cvetkovic2014}.

For the SOR-type comparison methods, the optimal values of parameters are selected
experimentally. For the RGTMSOR method, we fix $\tau=0.9$, as used in \cite{LiWangLiu2022}, while
the parameters $\theta$, and $\omega$ are selected experimentally. For RSMGS, the
pairs $(\phi_1,\phi_2)$ are selected using the admissible intervals derived in
Section~\ref{sec4} together with the experimental parameter search presented
in Subsection~\ref{ss5.2}.

\begin{table}[h]
	\centering
	\caption{Testing methods used for comparison}
	\label{tab:testing-methods}
	\small
	\renewcommand{\arraystretch}{1.2}
	\begin{tabular}{@{}lll@{}}
		\toprule
		Method & Ref. & Description \\
		\midrule
		MGS & \cite{Bai2010} & Modulus-based matrix-splitting Gauss-Seidel method \\
		MSOR & \cite{Bai2010} & Modulus-based matrix-splitting successive overrelaxation method \\
		MB-DS$_L$ & \cite{Fang2019} & Modulus-based matrix double-splitting method \\
		NMGS & \cite{Wu2022} & New modulus-based Gauss-Seidel method \\
		NPGS & \cite{Das2025} & New projected Gauss-Seidel iteration method \\
		NPSOR & \cite{Das2025} & New projected successive overrelaxation iteration method \\
		RGTMSOR & \cite{LiWangLiu2022} & Relaxed generalized two-sweep modulus-based SOR method \\
		\textbf{RSMGS} &  & Relaxed shifted matrix-splitting modulus-based Gauss-Seidel method \\
		\botrule
	\end{tabular}
\end{table}

All numerical experiments were carried out in MATLAB R2025b using double
precision arithmetic on a Windows 11, 64-bit operating system. The computations were performed on a system equipped with a 13th Gen Intel(R) Core(TM) i7 processor with 16 cores and 24 logical processors, and 16 GB RAM.

\subsection{Test Problems}

We consider four benchmark problems to test the performance of the proposed
method. In all examples, $m$ is a prescribed positive integer and $n=m^2$.

\begin{example}[Symmetric block tridiagonal problem \cite{Bai2010}]\label{Ex1}
	We consider the standard block tridiagonal LCP test problem. The coefficient matrix is defined by $A=\widehat{A}+\mu I_n$, where
	$\widehat{A}=\operatorname{Tridiag}(-I_m,S,-I_m)\in\mathbb{R}^{n\times n}$
	and $S=\operatorname{tridiag}(-1,4,-1)\in\mathbb{R}^{m\times m}$.
	Here, $I_n$ and $I_m$ denote the identity matrices of order $n$ and $m$,
	respectively. The vector $q$ is generated by $q=-Az^\ast$, where
	$z^\ast=(1,2,1,2,\ldots, 1,2,\ldots)^T\in\mathbb{R}^n$ is the unique solution of the
	corresponding $\operatorname{LCP}(q,A)$.
\end{example}

\begin{example}[Non-symmetric block tridiagonal problem \cite{Bai2010}]\label{Ex2}
	We consider a non-symmetric variant of the preceding test problem. The coefficient matrix is defined by
	$A=\widehat{A}+\mu I_n$, where
	$\widehat{A}=\operatorname{Tridiag}(-1.5I_m,S,-0.5I_m)
	\in\mathbb{R}^{n\times n}$ and
	$S=\operatorname{tridiag}(-1.5,4,-0.5)\in\mathbb{R}^{m\times m}$.
	Here, $I_n$ and $I_m$ denote the identity matrices of order $n$ and $m$,
	respectively. The vector $q$ is chosen as $q=-Az^\ast$, where
	$z^\ast=(1,2,1,2,\ldots, 1,2,\ldots)^T\in\mathbb{R}^n$ is the unique solution of the
	corresponding $\operatorname{LCP}(q,A)$.
\end{example}

\begin{example}[Sparse upper block problem \cite{Das2025}]\label{Ex3}
	Consider a sparse non-symmetric matrix of the form $A=P+\mu I_n$, 
	where $P\in\mathbb{R}^{n\times n}$ is defined by
	$$
	P=I_m\otimes S+Y+J
	=
	\begin{pmatrix}
		S & yI_m & jI_m &        &        & 0\\
		0 & S  & yI_m & jI_m     &        &  \\
		& 0  & S  & yI_m     & jI_m     &  \\
		&    & \ddots & \ddots & \ddots & \ddots\\
		&    &        & 0 & S & yI_m\\
		0 &    & \cdots &   & 0 & S
	\end{pmatrix}
	\in\mathbb{R}^{n\times n}.
	$$
	Here, $I_n$ and $I_m$ denote the identity matrices of order $n$ and $m$,
	respectively. The matrices $Y$ and $J$ denote the first and second block
	superdiagonal matrices, respectively. Moreover,
	$$
	S=\operatorname{tridiag}(-1,4,-1)
	=
	\begin{pmatrix}
		4 & -1 &        &        & 0\\
		-1 & 4 & -1     &        &  \\
		& -1 & 4     & \ddots &  \\
		&    & \ddots & \ddots & -1\\
		0  &    &        & -1 & 4
	\end{pmatrix}
	\in\mathbb{R}^{m\times m}.
	$$
	In the computations, we take $y=j=-1$. The vector $q$ is defined by
	$q=-Az^\ast$, where
$z^\ast=(1,2,1,2,\ldots, 1,2,\ldots)^T\in\mathbb{R}^n$ is the unique solution of the
	corresponding $\operatorname{LCP}(q,A)$.
\end{example}

\begin{example}[Quasi-variational inequality problem \cite{Isac2006,Wu2022}]\label{Ex4}
	This example is derived from a quasi-variational inequality model associated
	with a continuous optimal control problem. In its
	abstract form, the problem is to determine $u\in\mathcal{K}(u)$ such that
	$$
	(r-u)^T(Au+\mathcal{F}(u))\geq 0,\qquad \forall r\in\mathcal{K}(u),
	$$
	where $\mathcal{K}(u)=\psi(u)+K\subset\mathbb{R}^n$, $K$ is a positive cone,
	$\psi$ is an implicit obstacle mapping, and $\mathcal{F}$ is a given
	vector-valued mapping. This model can be reformulated as an
	$\operatorname{LCP}(q,A)$.
	In the resulting LCP, the coefficient matrix is the same as in
	Example~\ref{Ex1}.
	Here, the vector $\mathcal{F}(u)=q=(-1,1,-1,1,\ldots, -1, 1, \ldots)^T\in\mathbb{R}^n$ is used, and we take $r=2u$ in the computations.
\end{example}

\begin{table}[htbp]
	\centering
	\caption{Performance comparison for Example~\ref{Ex1}}
	\label{tab:ex1}
	\scriptsize
	\renewcommand{\arraystretch}{1.02}
	\setlength{\tabcolsep}{2pt}
	\begin{tabular*}{\textwidth}{@{\extracolsep{\fill}}l@{}clccccc@{}}
		\toprule
		\multirow{2}{*}{} & \multirow{2}{*}{Method} & \multirow{2}{*}{}
		& \multicolumn{5}{c}{$m$} \\
		\cmidrule(lr){4-8}
		& & & 100 & 400 & 600 & 800 & 1000 \\
		\midrule
		
		\multirow{24}{*}{$\mu=4$}
		& \multirow{3}{*}{MGS}
		& IT  & 42 & 44 & 45 & 45 & 45 \\
		& & CPU & 0.0455 & 0.4918 & 1.1187 & 2.0303 & 3.2073 \\
		& & RES & 8.3907e-06 & 8.7680e-06 & 7.6526e-06 & 8.8551e-06 & 9.9128e-06 \\
		
		\addlinespace
		& \multirow{3}{*}{\shortstack{MSOR\\[-0.5mm]$\alpha=0.85$}}
		& IT  & 19 & 20 & 21 & 21 & 21 \\
		& & CPU & 0.0150 & 0.2498 & 0.6300 & 0.9722 & 1.5429 \\
		& & RES & 4.3252e-06 & 6.9453e-06 & 4.0024e-06 & 5.3513e-06 & 6.7001e-06 \\
		
		\addlinespace
		& \multirow{3}{*}{MB-DS$_L$}
		& IT  & 16 & 17 & 17 & 18 & 18 \\
		& & CPU & 0.9074 & 14.2992 & 31.3217 & 51.8339 & 212.7356 \\
		& & RES & 4.3248e-06 & 6.0817e-06 & 9.1791e-06 & 4.0902e-06 & 5.1226e-06 \\
		
		\addlinespace
	& \multirow{3}{*}{\shortstack{NMGS\\[-0.5mm]}}
	& IT  & 21 & 23 & 23 & 24 & 24 \\
	& & CPU & 0.0242 & 0.2076 & 0.3815 & 0.7016 & 1.1178 \\
	& & RES & 6.9784e-06 & 5.4040e-06 & 8.1558e-06 & 4.6735e-06 & 5.8529e-06 \\
		
		\addlinespace
		& \multirow{3}{*}{NPGS}
		& IT  & 25 & 27 & 28 & 28 & 28 \\
		& & CPU & 0.0226 & 0.2009 & 0.4288 & 0.7313 & 1.1427 \\
		& & RES & 6.9580e-06 & 7.3060e-06 & 5.5090e-06 & 7.3665e-06 & 9.2240e-06 \\
		
		\addlinespace
		& \multirow{3}{*}{\shortstack{NPSOR\\[-0.5mm]$\alpha=1.7$}}
		& IT  & 17 & 19 & 19 & 19 & 19 \\
		& & CPU & 0.0075 & 0.1421 & 0.2802 & 0.5104 & 0.7795 \\
		& & RES & 7.6569e-06 & 3.9831e-06 & 5.8352e-06 & 7.6849e-06 & 9.5336e-06 \\
		
		\addlinespace
	& \multirow{3}{*}{\shortstack{RGTMSOR\\[-0.5mm]$\alpha=0.9$\\[-0.5mm]$\theta=1.2,\omega=0.2$}}
	& IT  & 14 & 14 & 15 & 15 & 15 \\
	& & CPU & 0.0138 & 0.2092 & 0.4533 & 0.7902 & 1.1911 \\
	& & RES & 5.1630e-06 & 9.9128e-06 & 3.8819e-06 & 4.4665e-06 & 4.9896e-06 \\
		
		\addlinespace
		& \multirow{3}{*}{\shortstack{\textbf{RSMGS}\\[-0.5mm]$(\phi_1,\phi_2)=(-1,1.75)$}}
		& IT  & 11 & 11 & 11 & 12 & 12 \\
		& & CPU & 0.0152 & 0.1158 & 0.2410 & 0.4538 & 0.7231 \\
		& & RES & 8.1704e-06 & 9.1263e-06 & 9.9000e-06 & 3.0183e-06 & 3.1300e-06 \\
		
		\midrule
		
		\multirow{24}{*}{$\mu=6$}
		& \multirow{3}{*}{MGS}
		& IT  & 33 & 35 & 35 & 35 & 36 \\
		& & CPU & 0.0229 & 0.4296 & 1.1089 & 1.6604 & 2.6437 \\
		& & RES & 8.3210e-06 & 7.0304e-06 & 8.6368e-06 & 9.9882e-06 & 7.1713e-06 \\
		
		\addlinespace
		& \multirow{3}{*}{\shortstack{MSOR\\[-0.5mm]$\alpha=0.85$}}
		& IT  & 15 & 16 & 16 & 16 & 16 \\
		& & CPU & 0.0105 & 0.2031 & 0.4664 & 0.8905 & 1.1190 \\
		& & RES & 3.6826e-06 & 3.8374e-06 & 5.6339e-06 & 7.4284e-06 & 9.2221e-06 \\
		
		\addlinespace
		& \multirow{3}{*}{MB-DS$_L$}
		& IT  & 13 & 14 & 14 & 14 & 15 \\
		& & CPU & 0.3096 & 5.1028 & 11.5758 & 44.1732 & 148.2979 \\
		& & RES & 4.2078e-06 & 4.3999e-06 & 6.6351e-06 & 8.8702e-06 & 2.7751e-06 \\
		
		\addlinespace
	& \multirow{3}{*}{\shortstack{NMGS\\[-0.5mm]}}
	& IT  & 17 & 18 & 18 & 19 & 19 \\
	& & CPU & 0.0151 & 0.1589 & 0.3913 & 0.6695 & 1.0596 \\
	& & RES & 4.3672e-06 & 6.0984e-06 & 9.1958e-06 & 4.0966e-06 & 5.1291e-06 \\
		\addlinespace
		& \multirow{3}{*}{NPGS}
		& IT  & 19 & 21 & 21 & 22 & 22 \\
		& & CPU & 0.0068 & 0.1598 & 0.3186 & 0.5738 & 0.9277 \\
		& & RES & 9.7593e-06 & 6.5033e-06 & 9.8018e-06 & 5.2390e-06 & 6.5584e-06 \\
		
		\addlinespace
		& \multirow{3}{*}{\shortstack{NPSOR\\[-0.5mm]$\alpha=1.5$}}
		& IT  & 14 & 15 & 15 & 15 & 16 \\
		& & CPU & 0.0058 & 0.1095 & 0.2337 & 0.4061 & 0.6606 \\
		& & RES & 5.0989e-06 & 4.9721e-06 & 7.1718e-06 & 9.3645e-06 & 3.3345e-06 \\
		
		\addlinespace
	& \multirow{3}{*}{\shortstack{RGTMSOR\\[-0.5mm]$\alpha=1$\\[-0.5mm]$\theta=0.8,\omega=0.1$}}
	& IT  & 12 & 13 & 13 & 14 & 14 \\
	& & CPU & 0.0071 & 0.1584 & 0.3398 & 0.6113 & 0.9671 \\
	& & RES & 9.1454e-06 & 6.1460e-06 & 8.4822e-06 & 2.5302e-06 & 3.0251e-06 \\
		
		\addlinespace
		& \multirow{3}{*}{\shortstack{\textbf{RSMGS}\\[-0.5mm]$(\phi_1,\phi_2)=(-1,1.75)$}}
		& IT  & 10 & 10 & 10 & 10 & 10 \\
		& & CPU & 0.0107 & 0.1172 & 0.2639 & 0.3847 & 0.5870 \\
		& & RES & 2.6642e-06 & 3.1133e-06 & 3.4233e-06 & 3.7393e-06 & 4.0598e-06 \\
		
		\bottomrule
	\end{tabular*}
\end{table}

\begin{table}[htbp]
	\centering
	\caption{Performance comparison for Example~\ref{Ex2}}
	\label{tab:ex2}
	\scriptsize
	\renewcommand{\arraystretch}{1.02}
	\setlength{\tabcolsep}{2pt}
	\begin{tabular*}{\textwidth}{@{\extracolsep{\fill}}l@{}clccccc@{}}
		\toprule
		\multirow{2}{*}{} & \multirow{2}{*}{Method} & \multirow{2}{*}{}
		& \multicolumn{5}{c}{$m$} \\
		\cmidrule(lr){4-8}
		& & & 100 & 400 & 600 & 800 & 1000 \\
		\midrule
		
		\multirow{24}{*}{$\mu=4$}
		& \multirow{3}{*}{MGS}
		& IT  & 27 & 28 & 29 & 29 & 29 \\
		& & CPU & 0.0210 & 0.3530 & 0.7213 & 1.5916 & 2.1377 \\
		& & RES & 7.3855e-06 & 8.8088e-06 & 6.3440e-06 & 7.3325e-06 & 8.2027e-06 \\
		
		\addlinespace
		& \multirow{3}{*}{\shortstack{MSOR\\[-0.5mm]$\alpha=0.88$}}
		& IT  & 15 & 16 & 16 & 16 & 17 \\
		& & CPU & 0.0124 & 0.2562 & 0.4651 & 0.7413 & 1.2337 \\
		& & RES & 6.3443e-06 & 5.6451e-06 & 7.6809e-06 & 9.6710e-06 & 3.7163e-06 \\
		
		\addlinespace
		& \multirow{3}{*}{MB-DS$_L$}
		& IT  & 11 & 12 & 12 & 12 & 12 \\
		& & CPU & 0.2588 & 4.5046 & 9.9763 & 17.6842 & 60.3513 \\
		& & RES & 3.8862e-06 & 3.2320e-06 & 4.8704e-06 & 6.5088e-06 & 8.1472e-06 \\
		
		\addlinespace
	& \multirow{3}{*}{\shortstack{NMGS\\[-0.5mm]}}
	& IT  & 19 & 20 & 21 & 21 & 21 \\
	& & CPU & 0.0160 & 0.1016 & 0.2285 & 0.3974 & 0.6099 \\
	& & RES & 4.8133e-06 & 7.8671e-06 & 4.5680e-06 & 6.1160e-06 & 7.6556e-06 \\
		
		\addlinespace
		& \multirow{3}{*}{NPGS}
		& IT  & 23 & 24 & 25 & 25 & 26 \\
		& & CPU & 0.0085 & 0.1840 & 0.3798 & 0.6628 & 1.0779 \\
		& & RES & 4.7820e-06 & 9.4259e-06 & 6.6362e-06 & 8.8761e-06 & 5.1863e-06 \\
		
		\addlinespace
		& \multirow{3}{*}{\shortstack{NPSOR\\[-0.5mm]$\alpha=1.8$}}
		& IT  & 14 & 15 & 15 & 15 & 15 \\
		& & CPU & 0.0052 & 0.1104 & 0.2384 & 0.5007 & 0.6189 \\
		& & RES & 5.0434e-06 & 4.4308e-06 & 6.0616e-06 & 7.6608e-06 & 9.2448e-06 \\
		
		\addlinespace
& \multirow{3}{*}{\shortstack{RGTMSOR\\[-0.5mm]$\alpha=0.9$\\[-0.5mm]$\theta=1.2,\omega=0.1$}}
& IT  & 11 & 12 & 12 & 12 & 12 \\
& & CPU & 0.0069 & 0.1432 & 0.3087 & 0.5252 & 0.8181 \\
& & RES & 6.6125e-06 & 2.4692e-06 & 3.0875e-06 & 3.6396e-06 & 4.1520e-06 \\
		
		\addlinespace
		& \multirow{3}{*}{\shortstack{\textbf{RSMGS}\\[-0.5mm]$(\phi_1,\phi_2)=(-0.75,1.25)$}}
		& IT  & 8 & 8 & 8 & 8 & 8 \\
		& & CPU & 0.0047 & 0.0797 & 0.1647 & 0.3052 & 0.4539 \\
		& & RES & 1.2616e-06 & 1.9999e-06 & 2.3760e-06 & 2.7062e-06 & 3.0057e-06 \\
		
		\midrule
		
		\multirow{24}{*}{$\mu=6$}
		& \multirow{3}{*}{MGS}
		& IT  & 24 & 25 & 25 & 26 & 26 \\
		& & CPU & 0.0182 & 0.3196 & 0.6731 & 1.2200 & 1.8791 \\
		& & RES & 7.0551e-06 & 7.7321e-06 & 9.4857e-06 & 5.9261e-06 & 6.6290e-06 \\
		
		\addlinespace
		& \multirow{3}{*}{\shortstack{MSOR\\[-0.5mm]$\alpha=0.88$}}
		& IT  & 13 & 14 & 14 & 14 & 15 \\
		& & CPU & 0.0093 & 0.1709 & 0.3890 & 0.6646 & 1.0708 \\
		& & RES & 9.7156e-06 & 6.4264e-06 & 7.8852e-06 & 9.1144e-06 & 3.3358e-06 \\
		
		\addlinespace
		& \multirow{3}{*}{MB-DS$_L$}
		& IT  & 9 & 10 & 10 & 10 & 11 \\
		& & CPU & 0.2162 & 3.6861 & 8.2891 & 14.5716 & 24.9054 \\
		& & RES & 7.1206e-06 & 4.1990e-06 & 6.3231e-06 & 8.4472e-06 & 1.5096e-06 \\
		
		\addlinespace
	& \multirow{3}{*}{\shortstack{NMGS\\[-0.5mm]}}
	& IT  & 15 & 16 & 17 & 17 & 17 \\
	& & CPU & 0.0171 & 0.0816 & 0.1811 & 0.3096 & 0.4819 \\
	& & RES & 5.9930e-06 & 7.4016e-06 & 3.2821e-06 & 4.3888e-06 & 5.4956e-06 \\
		
		\addlinespace
		& \multirow{3}{*}{NPGS}
		& IT  & 18 & 19 & 20 & 20 & 20 \\
		& & CPU & 0.0066 & 0.1328 & 0.2984 & 0.5329 & 0.8174 \\
		& & RES & 5.0586e-06 & 7.7854e-06 & 4.3222e-06 & 5.7778e-06 & 7.2334e-06 \\
		
		\addlinespace
		& \multirow{3}{*}{\shortstack{NPSOR\\[-0.5mm]$\alpha=1.45$}}
		& IT  & 12 & 13 & 13 & 14 & 14 \\
		& & CPU & 0.0044 & 0.0925 & 0.1941 & 0.3761 & 0.5742 \\
		& & RES & 6.2229e-06 & 6.2987e-06 & 9.4828e-06 & 3.1045e-06 & 3.8850e-06 \\
		
		\addlinespace
& \multirow{3}{*}{\shortstack{RGTMSOR\\[-0.5mm]$\alpha=0.9$\\[-0.5mm]$\theta=1.1,\omega=0.1$}}
& IT  & 10 & 11 & 12 & 12 & 12 \\
& & CPU & 0.0058 & 0.1326 & 0.2974 & 0.5322 & 0.8103 \\
& & RES & 4.5899e-06 & 7.5099e-06 & 9.7055e-07 & 1.2772e-06 & 1.5837e-06 \\
		
		\addlinespace
		& \multirow{3}{*}{\shortstack{\textbf{RSMGS}\\[-0.5mm]$(\phi_1,\phi_2)=(-0.75,1.25)$}}
		& IT  & 7 & 7 & 7 & 7 & 7 \\
		& & CPU & 0.0046 & 0.0736 & 0.1722 & 0.2556 & 0.3847 \\
		& & RES & 2.7003e-06 & 4.9824e-06 & 6.0659e-06 & 6.9994e-06 & 7.8368e-06 \\
		
		\bottomrule
	\end{tabular*}
\end{table}

\begin{table}[htbp]
	\centering
	\caption{Performance comparison for Example~\ref{Ex3}}
	\label{tab:ex3}
	\scriptsize
	\renewcommand{\arraystretch}{1.02}
	\setlength{\tabcolsep}{2pt}
	\begin{tabular*}{\textwidth}{@{\extracolsep{\fill}}l@{}clccccc@{}}
		\toprule
		\multirow{2}{*}{} & \multirow{2}{*}{Method} & \multirow{2}{*}{}
		& \multicolumn{5}{c}{$m$} \\
		\cmidrule(lr){4-8}
		& & & 100 & 400 & 600 & 800 & 1000 \\
		\midrule
		
		\multirow{24}{*}{$\mu=4$}
		& \multirow{3}{*}{MGS}
		& IT  & 44 & 47 & 47 & 48 & 48 \\
		& & CPU & 0.0448 & 0.5396 & 1.1650 & 2.1731 & 3.1117 \\
		& & RES & 9.7027e-06 & 7.3058e-06 & 8.9884e-06 & 7.4139e-06 & 8.3004e-06 \\
		
		\addlinespace
		& \multirow{3}{*}{\shortstack{MSOR\\[-0.5mm]$\alpha=0.88$}}
		& IT  & 20 & 21 & 22 & 22 & 22 \\
		& & CPU & 0.0153 & 0.2357 & 0.5566 & 1.0045 & 1.4873 \\
		& & RES & 6.8065e-06 & 7.9965e-06 & 4.6639e-06 & 5.8673e-06 & 7.0564e-06 \\
		
		\addlinespace
		& \multirow{3}{*}{MB-DS$_L$}
		& IT  & 20 & 22 & 22 & 23 & 23 \\
		& & CPU & 0.2546 & 3.9438 & 8.7513 & 16.3213 & 24.3193 \\
		& & RES & 7.5412e-06 & 6.1841e-06 & 9.3951e-06 & 5.3980e-06 & 6.7740e-06 \\
		
		\addlinespace
	& \multirow{3}{*}{\shortstack{NMGS\\[-0.5mm]}}
	& IT  & 23 & 25 & 26 & 26 & 26 \\
	& & CPU & 0.0171 & 0.1229 & 0.2625 & 0.4675 & 0.7130 \\
	& & RES & 8.3765e-06 & 8.1792e-06 & 5.7946e-06 & 7.7775e-06 & 9.7603e-06 \\
		
		\addlinespace
		& \multirow{3}{*}{NPGS}
		& IT  & 27 & 29 & 30 & 31 & 31 \\
		& & CPU & 0.0095 & 0.2074 & 0.4942 & 0.9220 & 1.2202 \\
		& & RES & 7.6859e-06 & 9.5827e-06 & 7.6954e-06 & 5.4622e-06 & 6.8534e-06 \\
		
		\addlinespace
		& \multirow{3}{*}{\shortstack{NPSOR\\[-0.5mm]$\alpha=1.9$}}
		& IT  & 19 & 21 & 21 & 21 & 21 \\
		& & CPU & 0.0086 & 0.1776 & 0.3411 & 0.5748 & 0.8652 \\
		& & RES & 9.8799e-06 & 4.6382e-06 & 6.0085e-06 & 7.2912e-06 & 8.5259e-06 \\
		
		\addlinespace
	& \multirow{3}{*}{\shortstack{RGTMSOR\\[-0.5mm]$\alpha=0.9$\\[-0.5mm]$\theta=1.1,\omega=0.1$}}
	& IT  & 17 & 18 & 19 & 19 & 19 \\
	& & CPU & 0.0087 & 0.2138 & 0.4774 & 0.8392 & 1.2856 \\
	& & RES & 7.6449e-06 & 7.8188e-06 & 3.9091e-06 & 4.9185e-06 & 5.9178e-06 \\
		
		\addlinespace
		& \multirow{3}{*}{\shortstack{\textbf{RSMGS}\\[-0.5mm]$(\phi_1,\phi_2)=(-2,2)$}}
		& IT  & 16 & 16 & 16 & 16 & 16 \\
		& & CPU & 0.0067 & 0.1012 & 0.1892 & 0.3268 & 0.5136 \\
		& & RES & 4.5752e-06 & 5.0196e-06 & 5.3690e-06 & 5.7948e-06 & 6.2814e-06 \\
		
		\midrule
		
		\multirow{24}{*}{$\mu=6$}
		& \multirow{3}{*}{MGS}
		& IT  & 34 & 36 & 36 & 37 & 37 \\
		& & CPU & 0.0235 & 0.4034 & 0.8712 & 1.6793 & 2.5007 \\
		& & RES & 9.4515e-06 & 7.9517e-06 & 9.7702e-06 & 7.2283e-06 & 8.0894e-06 \\
		
		\addlinespace
		& \multirow{3}{*}{\shortstack{MSOR\\[-0.5mm]$\alpha=0.85$}}
		& IT  & 16 & 17 & 18 & 18 & 18 \\
		& & CPU & 0.0106 & 0.1939 & 0.4376 & 0.8380 & 1.2808 \\
		& & RES & 6.9276e-06 & 9.7397e-06 & 4.7390e-06 & 6.3517e-06 & 7.9644e-06 \\
		
		\addlinespace
		& \multirow{3}{*}{MB-DS$_L$}
		& IT  & 16 & 17 & 18 & 18 & 18 \\
		& & CPU & 0.2435 & 3.8297 & 8.3767 & 41.9923 & 49.3077 \\
		& & RES & 6.1965e-06 & 9.0231e-06 & 4.5513e-06 & 6.1001e-06 & 7.6489e-06 \\
		
		\addlinespace
	& \multirow{3}{*}{\shortstack{NMGS\\[-0.5mm]}}
	& IT  & 18 & 20 & 20 & 21 & 21 \\
	& & CPU & 0.0138 & 0.0995 & 0.2102 & 0.3824 & 0.6056 \\
	& & RES & 8.3138e-06 & 4.9369e-06 & 7.4843e-06 & 3.6933e-06 & 4.6318e-06 \\
		
		\addlinespace
		& \multirow{3}{*}{NPGS}
		& IT  & 21 & 23 & 23 & 24 & 24 \\
		& & CPU & 0.0074 & 0.1750 & 0.3344 & 0.7085 & 1.0303 \\
		& & RES & 6.6945e-06 & 5.3469e-06 & 8.0989e-06 & 4.6478e-06 & 5.8272e-06 \\
		
		\addlinespace
		& \multirow{3}{*}{\shortstack{NPSOR\\[-0.5mm]$\alpha=1.6$}}
		& IT  & 15 & 16 & 17 & 17 & 17 \\
		& & CPU & 0.0061 & 0.1209 & 0.2451 & 0.5163 & 0.7173 \\
		& & RES & 7.7009e-06 & 7.2529e-06 & 3.2936e-06 & 4.1657e-06 & 5.0297e-06 \\
		
		\addlinespace
& \multirow{3}{*}{\shortstack{RGTMSOR\\[-0.5mm]$\alpha=0.9$\\[-0.5mm]$\theta=1,\omega=0.1$}}
& IT  & 14 & 15 & 16 & 16 & 16 \\
& & CPU & 0.0098 & 0.1748 & 0.3946 & 0.7049 & 1.1197 \\
& & RES & 7.3573e-06 & 8.2839e-06 & 3.3340e-06 & 4.4527e-06 & 5.5715e-06 \\
		
		\addlinespace
		& \multirow{3}{*}{\shortstack{\textbf{RSMGS}\\[-0.5mm]$(\phi_1,\phi_2)=(-2,2.25)$}}
		& IT  & 13 & 13 & 13 & 13 & 13 \\
		& & CPU & 0.0061 & 0.0840 & 0.1603 & 0.2668 & 0.4179 \\
		& & RES & 3.5083e-06 & 3.8978e-06 & 4.1840e-06 & 4.4866e-06 & 4.8026e-06 \\
		
		\bottomrule
	\end{tabular*}
\end{table}

\begin{table}[htbp]
	\centering
	\caption{Performance comparison for Example~\ref{Ex4}}
	\label{tab:ex4}
	\scriptsize
	\renewcommand{\arraystretch}{1.02}
	\setlength{\tabcolsep}{2pt}
	\begin{tabular*}{\textwidth}{@{\extracolsep{\fill}}l@{}clccccc@{}}
		\toprule
		\multirow{2}{*}{} & \multirow{2}{*}{Method} & \multirow{2}{*}{}
		& \multicolumn{5}{c}{$m$} \\
		\cmidrule(lr){4-8}
		& & & 100 & 400 & 600 & 800 & 1000 \\
		\midrule
		
		\multirow{24}{*}{$\mu=4$}
		& \multirow{3}{*}{MGS}
		& IT  & 19 & 20 & 20 & 21 & 21 \\
		& & CPU & 0.0193 & 0.2686 & 0.5828 & 0.8619 & 1.5133 \\
		& & RES & 7.2507e-06 & 7.1261e-06 & 8.6953e-06 & 5.0709e-06 & 5.6623e-06 \\
		
		\addlinespace
		& \multirow{3}{*}{\shortstack{MSOR\\[-0.5mm]$\alpha=0.80$}}
		& IT  & 11 & 13 & 14 & 14 & 14 \\
		& & CPU & 0.0087 & 0.1841 & 0.3421 & 0.7373 & 1.0821 \\
		& & RES & 7.5268e-06 & 7.0841e-06 & 2.8860e-06 & 3.8510e-06 & 4.8160e-06 \\
		
		\addlinespace
		& \multirow{3}{*}{MB-DS$_L$}
		& IT  & 10 & 11 & 11 & 11 & 11 \\
		& & CPU & 2.0121 & 4.1712 & 9.1291 & 16.1363 & 56.2591 \\
		& & RES & 4.9842e-06 & 2.9620e-06 & 4.4640e-06 & 5.9659e-06 & 7.4679e-06 \\
		
		\addlinespace
	& \multirow{3}{*}{\shortstack{NMGS\\[-0.5mm]}}
	& IT  & 10 & 11 & 11 & 11 & 11 \\
	& & CPU & 0.0090 & 0.0548 & 0.1263 & 0.2030 & 0.3072 \\
	& & RES & 4.1746e-06 & 2.3883e-06 & 3.5699e-06 & 4.7512e-06 & 5.9325e-06 \\
		
		\addlinespace
		& \multirow{3}{*}{NPGS}
		& IT  & 102 & 113 & 116 & 118 & 120 \\
		& & CPU & 0.0423 & 0.8268 & 1.7710 & 3.1464 & 5.0722 \\
		& & RES & 9.4194e-06 & 8.8765e-06 & 9.0235e-06 & 9.2886e-06 & 8.9716e-06 \\
		
		\addlinespace
		& \multirow{3}{*}{\shortstack{NPSOR\\[-0.5mm]$\alpha=1.95$}}
		& IT  & 75 & 82 & 84 & 86 & 87 \\
		& & CPU & 0.0301 & 0.6226 & 1.2575 & 2.2686 & 3.5721 \\
		& & RES & 8.9084e-06 & 9.1470e-06 & 9.5347e-06 & 8.9286e-06 & 9.3398e-06 \\
		
		\addlinespace
& \multirow{3}{*}{\shortstack{RGTMSOR\\[-0.5mm]$\alpha=1$\\[-0.5mm]$\theta=0.8,\omega=0.1$}}
& IT  & 10 & 11 & 11 & 12 & 12 \\
& & CPU & 0.0052 & 0.1336 & 0.2764 & 0.5334 & 0.8012 \\
& & RES & 7.7785e-06 & 6.1583e-06 & 8.5545e-06 & 2.8908e-06 & 3.4395e-06 \\
		
		\addlinespace
		& \multirow{3}{*}{\shortstack{\textbf{RSMGS}\\[-0.5mm]$(\phi_1,\phi_2)=(-1,0)$}}
		& IT  & 9 & 9 & 9 & 9 & 10 \\
		& & CPU & 0.0050 & 0.0602 & 0.1151 & 0.1916 & 0.3514 \\
		& & RES & 4.4875e-06 & 7.0999e-06 & 8.4380e-06 & 9.6167e-06 & 1.7865e-06 \\
		
		\midrule
		
		\multirow{24}{*}{$\mu=6$}
		& \multirow{3}{*}{MGS}
		& IT  & 17 & 18 & 19 & 19 & 19 \\
		& & CPU & 0.0132 & 0.2561 & 0.4447 & 0.8087 & 1.4010 \\
		& & RES & 9.8602e-06 & 9.1619e-06 & 5.2998e-06 & 6.1124e-06 & 6.8290e-06 \\
		
		\addlinespace
		& \multirow{3}{*}{\shortstack{MSOR\\[-0.5mm]$\alpha=0.85$}}
		& IT  & 10 & 11 & 11 & 11 & 11 \\
		& & CPU & 0.0083 & 0.1561 & 0.2661 & 0.5741 & 0.9419 \\
		& & RES & 7.4848e-06 & 3.4084e-06 & 4.6944e-06 & 5.9604e-06 & 7.2169e-06 \\
		
		\addlinespace
		& \multirow{3}{*}{MB-DS$_L$}
		& IT  & 9 & 10 & 10 & 10 & 10 \\
		& & CPU & 0.2168 & 3.6420 & 8.2379 & 14.5351 & 22.8988 \\
		& & RES & 4.6920e-06 & 2.1617e-06 & 3.2568e-06 & 4.3519e-06 & 5.4470e-06 \\
		
		\addlinespace
	& \multirow{3}{*}{\shortstack{NMGS\\[-0.5mm]}}
	& IT  & 9 & 10 & 10 & 10 & 10 \\
	& & CPU & 0.0068 & 0.0517 & 0.1127 & 0.1821 & 0.2794 \\
	& & RES & 4.1021e-06 & 1.8372e-06 & 2.7505e-06 & 3.6637e-06 & 4.5769e-06 \\
		
		\addlinespace
		& \multirow{3}{*}{NPGS}
		& IT  & 121 & 133 & 137 & 140 & 142 \\
		& & CPU & 0.0511 & 0.9339 & 2.0593 & 3.6426 & 6.0710 \\
		& & RES & 9.4775e-06 & 9.7680e-06 & 9.5887e-06 & 9.3417e-06 & 9.4777e-06 \\
		
		\addlinespace
		& \multirow{3}{*}{\shortstack{NPSOR\\[-0.5mm]$\alpha=1.95$}}
		& IT  & 90 & 95 & 97 & 98 & 99 \\
		& & CPU & 0.0409 & 0.6756 & 1.4310 & 2.8074 & 4.0826 \\
		& & RES & 8.8453e-06 & 9.3737e-06 & 9.0174e-06 & 9.3402e-06 & 9.3572e-06 \\
		
		\addlinespace
& \multirow{3}{*}{\shortstack{RGTMSOR\\[-0.5mm]$\alpha=0.9$\\[-0.5mm]$\theta=1,\omega=0.1$}}
& IT  & 10 & 11 & 11 & 11 & 11 \\
& & CPU & 0.0069 & 0.1351 & 0.2799 & 0.4884 & 0.7466 \\
& & RES & 5.5292e-06 & 3.7595e-06 & 4.6177e-06 & 5.3466e-06 & 5.9937e-06 \\
		
		\addlinespace
		& \multirow{3}{*}{\shortstack{\textbf{RSMGS}\\[-0.5mm]$(\phi_1,\phi_2)=(-1,0)$}}
		& IT  & 8 & 8 & 9 & 9 & 9 \\
		& & CPU & 0.0049 & 0.0555 & 0.1279 & 0.2203 & 0.3412 \\
		& & RES & 4.6782e-06 & 8.3383e-06 & 1.3128e-06 & 1.4909e-06 & 1.6519e-06 \\
		
		\bottomrule
	\end{tabular*}
\end{table}

\subsection{Selection of Efficient Parameters \texorpdfstring{$\phi_1$ and $\phi_2$}{phi1 and phi2}}
\label{ss5.2}

The convergence interval in Theorem~\ref{thm:parameter-interval} provides a
sufficient region of admissible parameters, that is, parameter values for which
the convergence of the proposed method is guaranteed. However, this condition is
only sufficient and does not necessarily identify the fastest convergent
parameters. Different admissible pairs may lead to different iteration counts,
CPU times, and final residuals.

Therefore, the grid-search procedure in Algorithm~\ref{algo2} is used to select
efficient parameters for the numerical experiments. In the computations, the
grid step sizes are taken as \(h_1=h_2=0.25\). The selected pair
\((\phi_1,\phi_2)\) is the one that gives the smallest number of iterations over
the prescribed grid. If several pairs give the same minimum iteration count,
the pair with the smallest final residual is chosen.

For \(\mu=4\) and \(\mu=6\), the admissible interval obtained from the
sufficient convergence theorem is used as the initial search region in
Algorithm~\ref{algo2}. For each example, the pair \((\phi_1,\phi_2)\) is
selected at the fixed problem size \(m=1000\), and the same pair is then kept
fixed for all other values of \(m\).

\begin{algorithm}[h]
	\caption{Grid-search procedure for selecting \((\phi_1,\phi_2)\)}
	\label{algo2}
	\begin{algorithmic}[1]
		\Require Matrix \(A\), vector \(q\), initial vector \(x^{(0)}\), tolerance
		\(\varepsilon\), maximum iteration number \(K_{\max}\), fixed \(m\), fixed
		\(\mu\), and step sizes \(h_1,h_2>0\).
		\Ensure Selected parameter pair \((\phi_1,\phi_2)\).
		
		\State Choose search intervals
		\(\phi_1\in[\phi_1^{\ell},\phi_1^{u}]\) and
		\(\phi_2\in[\phi_2^{\ell},\phi_2^{u}]\).
		
		\State Construct the finite grids
		\begin{equation*}
			\Phi_1
			=
			\{\phi_1^{\ell},\phi_1^{\ell}+h_1,\ldots,\phi_1^{u}\},
			\qquad
			\Phi_2
			=
			\{\phi_2^{\ell},\phi_2^{\ell}+h_2,\ldots,\phi_2^{u}\}.
		\end{equation*}
		
		\State Set \(IT_{\min}=+\infty\), \(RES_{\min}=+\infty\), and initialize
		\((\phi_1,\phi_2)\).
		
		\For{each \(\widehat{\phi}_1\in\Phi_1\)}
		\For{each \(\widehat{\phi}_2\in\Phi_2\)}
		\State Run RSMGS with \((\widehat{\phi}_1,\widehat{\phi}_2)\).
		\State Compute \(IT(\widehat{\phi}_1,\widehat{\phi}_2)\) and
		\(RES(\widehat{\phi}_1,\widehat{\phi}_2)\).
		
		\If{the method converges and
			\(RES(\widehat{\phi}_1,\widehat{\phi}_2)\leq\varepsilon\)}
		\If{\(IT(\widehat{\phi}_1,\widehat{\phi}_2)<IT_{\min}\)}
		\State Set
		\begin{equation*}
			IT_{\min}=IT(\widehat{\phi}_1,\widehat{\phi}_2),
			\qquad
			RES_{\min}=RES(\widehat{\phi}_1,\widehat{\phi}_2),
			\qquad
			(\phi_1,\phi_2)=(\widehat{\phi}_1,\widehat{\phi}_2).
		\end{equation*}
		\ElsIf{\(IT(\widehat{\phi}_1,\widehat{\phi}_2)=IT_{\min}\) and
			\(RES(\widehat{\phi}_1,\widehat{\phi}_2)<RES_{\min}\)}
		\State Set
		\begin{equation*}
			RES_{\min}=RES(\widehat{\phi}_1,\widehat{\phi}_2),
			\qquad
			(\phi_1,\phi_2)=(\widehat{\phi}_1,\widehat{\phi}_2).
		\end{equation*}
		\EndIf
		\EndIf
		\EndFor
		\EndFor
		
		\State \Return \((\phi_1,\phi_2)\).
		
	\end{algorithmic}
\end{algorithm}

For all examples, the efficient parameter-selection plots for
\(\mu=4,6\) are shown in Figures~\ref{fig1}--\ref{fig4}. The selected
parameter pairs for all tested values of \(\mu\) are also reported in the
corresponding result tables.
\begin{figure}[H]
	\centering
	\begin{subfigure}{0.48\textwidth}
		\centering
		\includegraphics[width=\textwidth]{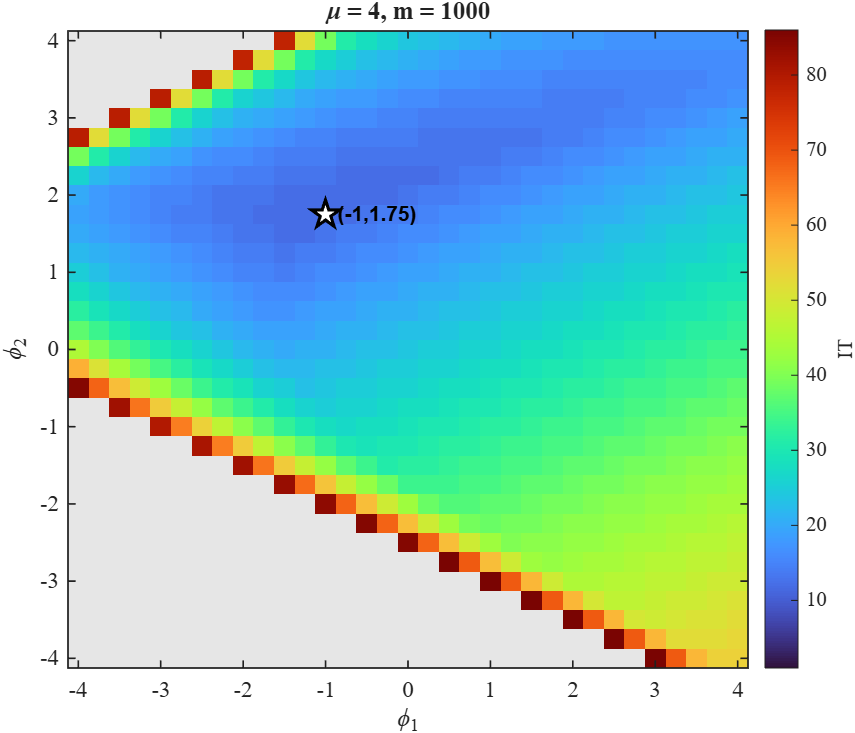}
		\caption{\(\mu=4\)}
		\label{fig:ex1-mu4}
	\end{subfigure}
	\hfill
	\begin{subfigure}{0.48\textwidth}
		\centering
		\includegraphics[width=\textwidth]{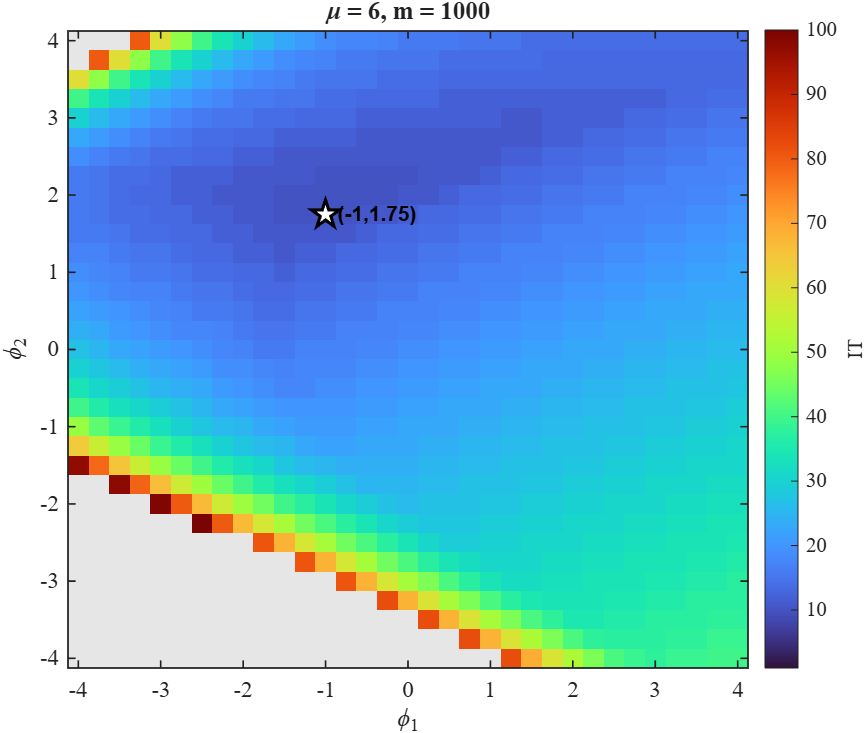}
		\caption{\(\mu=6\)}
		\label{fig:ex1-mu6}
	\end{subfigure}
	\caption{Efficient parameter-selection plots in the \((\phi_1,\phi_2)\)-plane for Example~\ref{Ex1}.}
	\label{fig1}
\end{figure}

\begin{figure}[H]
	\centering
	\begin{subfigure}{0.48\textwidth}
		\centering
		\includegraphics[width=\textwidth]{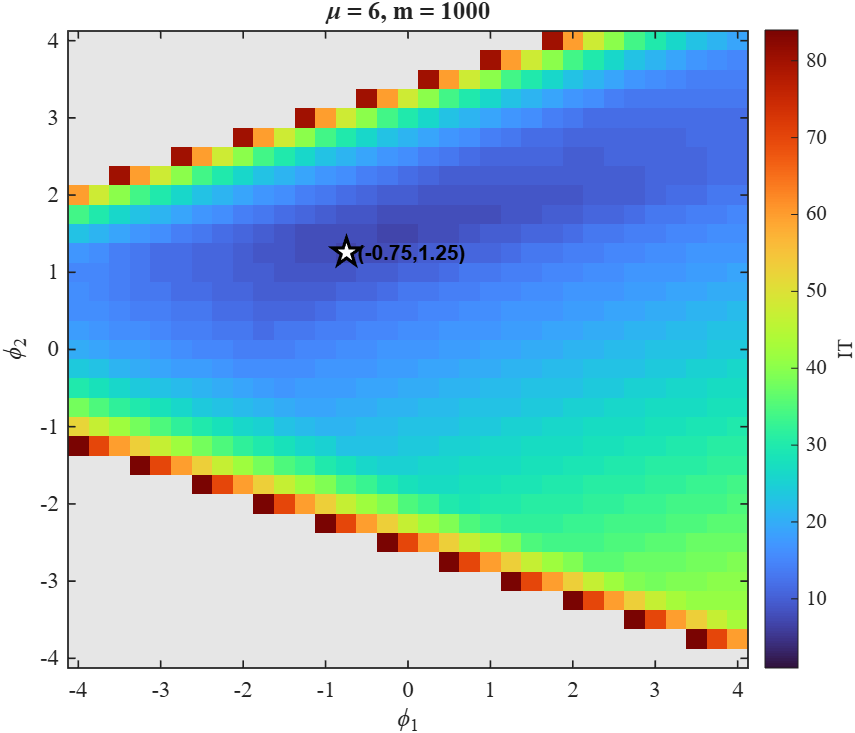}
		\caption{\(\mu=4\)}
		\label{fig:ex2-mu4}
	\end{subfigure}
	\hfill
	\begin{subfigure}{0.48\textwidth}
		\centering
		\includegraphics[width=\textwidth]{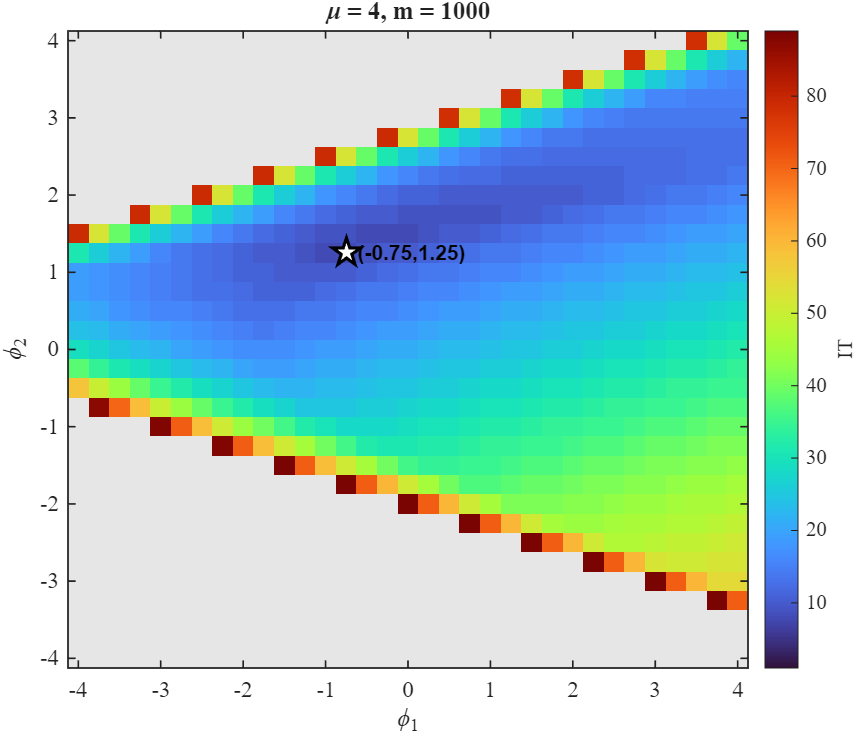}
		\caption{\(\mu=6\)}
		\label{fig:ex2-mu6}
	\end{subfigure}
	\caption{Efficient parameter-selection plots in the \((\phi_1,\phi_2)\)-plane for Example~\ref{Ex2}.}
	\label{fig2}
\end{figure}

\begin{figure}[H]
	\centering
	\begin{subfigure}{0.48\textwidth}
		\centering
		\includegraphics[width=\textwidth]{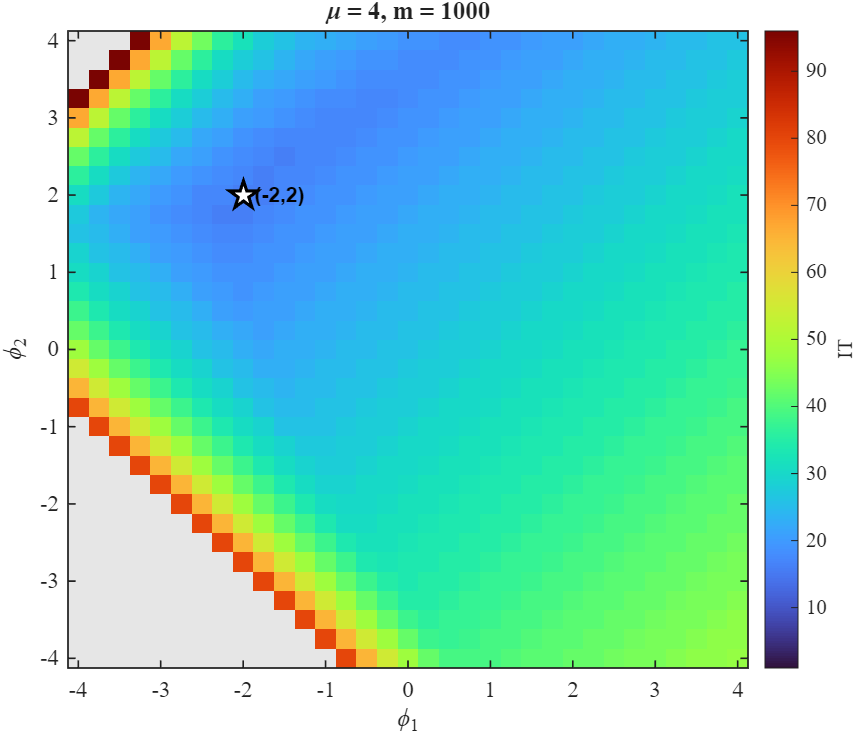}
		\caption{\(\mu=4\)}
		\label{fig:ex3-mu4}
	\end{subfigure}
	\hfill
	\begin{subfigure}{0.48\textwidth}
		\centering
		\includegraphics[width=\textwidth]{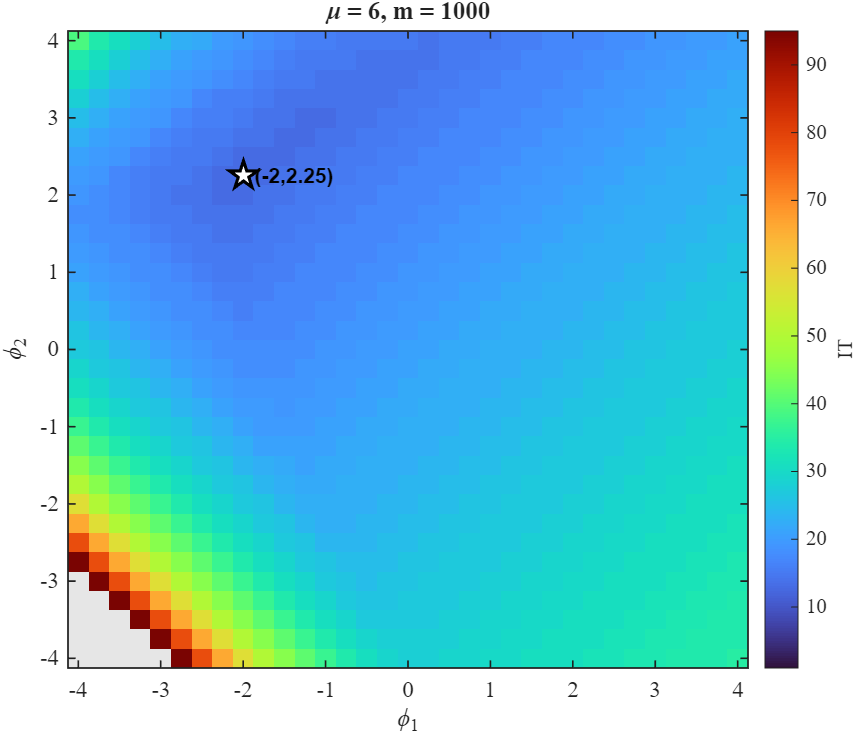}
		\caption{\(\mu=6\)}
		\label{fig:ex3-mu6}
	\end{subfigure}
	\caption{Efficient parameter-selection plots in the \((\phi_1,\phi_2)\)-plane for Example~\ref{Ex3}.}
	\label{fig3}
\end{figure}

\begin{figure}[H]
	\centering
	\begin{subfigure}{0.48\textwidth}
		\centering
		\includegraphics[width=\textwidth]{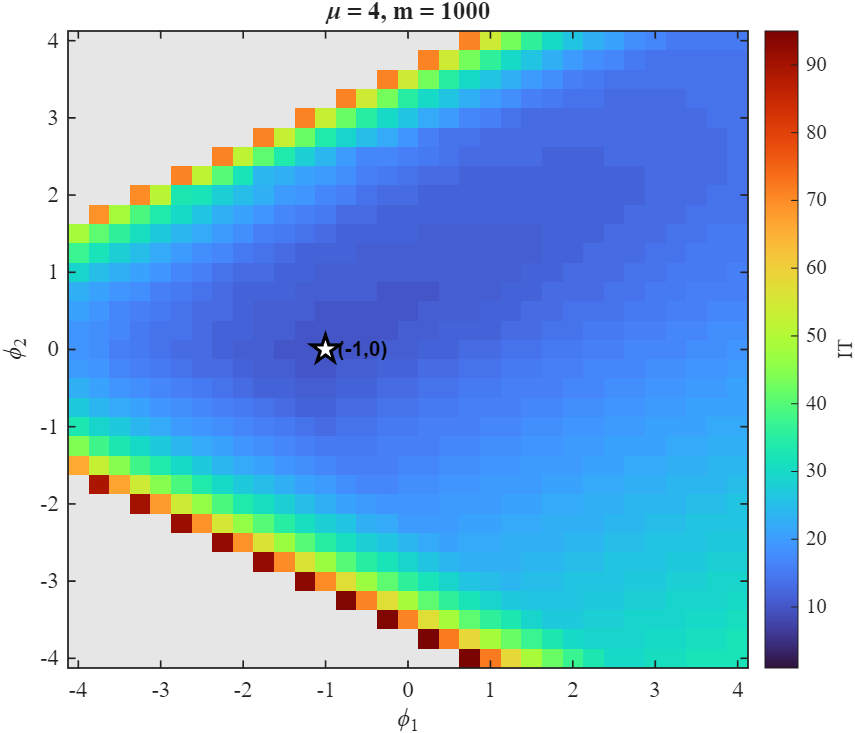}
		\caption{\(\mu=4\)}
		\label{fig:ex4-mu4}
	\end{subfigure}
	\hfill
	\begin{subfigure}{0.48\textwidth}
		\centering
		\includegraphics[width=\textwidth]{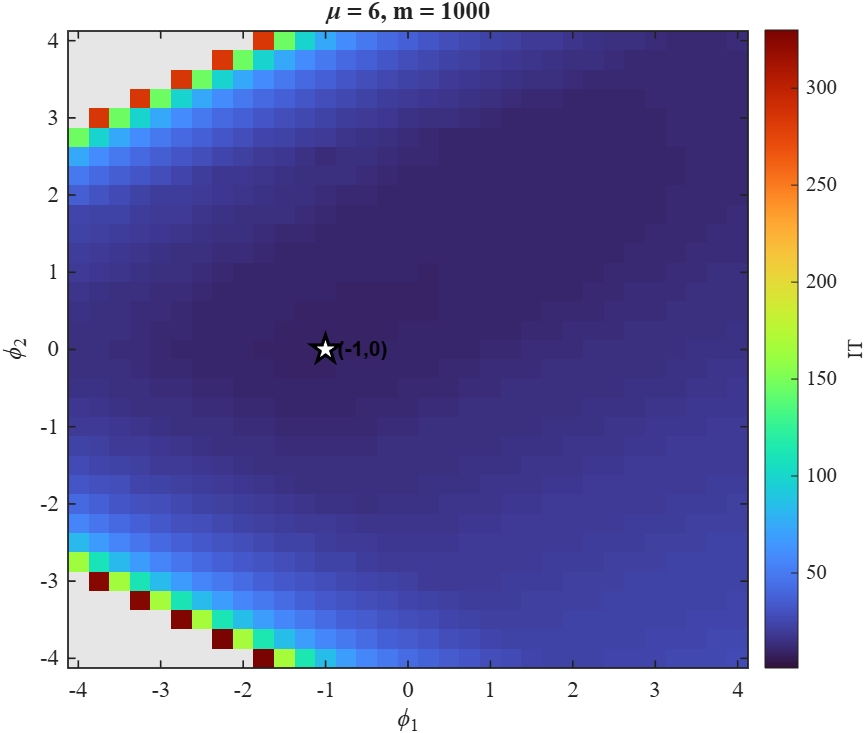}
		\caption{\(\mu=6\)}
		\label{fig:ex4-mu6}
	\end{subfigure}
	\caption{Efficient parameter-selection plots in the \((\phi_1,\phi_2)\)-plane for Example~\ref{Ex4}.}
	\label{fig4}
\end{figure}

To verify that the experimentally selected RSMGS parameters are also
covered by the sufficient convergence theory, we evaluate the scalar margin
\(\Delta_{1,1}\) from Remark~\ref{Rem4.8} at the largest tested problem size,
namely \(m=1000\). Since the values of \(\kappa_m\) for the present structured
examples are increasing with \(m\), positivity of the margin at \(m=1000\)
also guarantees positivity for the smaller values of \(m\) used in the
experiments.

\begin{table}[htbp]
	\centering
	\caption{Verification of the sufficient convergence condition for RSMGS at \(m=1000\)}
	\label{tab:rsmgs-theory-check}
	\small
	\renewcommand{\arraystretch}{1.12}
	\begin{tabular*}{\textwidth}{@{\extracolsep{\fill}}ccccc@{}}
		\toprule
		Example & \(\kappa_m\) & \(\mu\) & \((\phi_1,\phi_2)\) & \(\Delta_{1,1}\) \\
		\midrule
		\multirow{2}{*}{Example~\ref{Ex1}} & \multirow{2}{*}{4}
		& 4 & \((-1,1.75)\) & 0.7336 \\
		& & 6 & \((-1,1.75)\) & 4.7336 \\
		
		\addlinespace
		\multirow{2}{*}{Example~\ref{Ex2}} & \multirow{2}{*}{3.4641}
		& 4 & \((-0.75,1.25)\) & 4.1077 \\
		& & 6 & \((-0.75,1.25)\) & 8.1077 \\
		
		\addlinespace
		\multirow{2}{*}{Example~\ref{Ex3}} & \multirow{2}{*}{2}
		& 4 & \((-2,2)\) & 5.0718 \\
		& & 6 & \((-2,2.25)\) & 8.7889 \\
		
		\addlinespace
		\multirow{2}{*}{Example~\ref{Ex4}} & \multirow{2}{*}{4}
		& 4 & \((-1,0)\) & 6 \\
		& & 6 & \((-1,0)\) & 10 \\
		\botrule
	\end{tabular*}
\end{table}

All values of \(\Delta_{1,1}\) are positive. Hence the selected RSMGS
parameters satisfy the sufficient convergence condition for the largest
problem size considered, and therefore also for the smaller problem sizes in
Tables~\ref{tab:ex1}--\ref{tab:ex4}.

\subsection{Sensitivity Analysis for \texorpdfstring{\(\Omega_1\)}{Omega1} and \texorpdfstring{\(\Omega_2\)}{Omega2}}
In this subsection, we investigate how different choices of \(\Omega_1\) and
\(\Omega_2\) influence the numerical performance of RSMGS. We compare the following three choices:
$$
\Omega_1=\phi_1 I,\quad \Omega_2=\phi_2 L,
\qquad
\Omega_1=\phi_1 I,\quad \Omega_2=\phi_2 U,
\qquad
\Omega_1=\phi_1 I,\quad \Omega_2=\phi_2 D.
$$
These choices are not exhaustive; they are selected as representative
structured choices based on the diagonal, lower triangular, and upper
triangular parts of the splitting \(A=D-L-U\). A complete comparison of all
possible structured choices may be considered in future work.
For each choice, the efficient parameter pair \((\phi_1,\phi_2)\) is first
selected at the fixed size \(m=100\) with \(\mu=4\) using Algorithm~\ref{algo2}. The same selected pair is then kept fixed and used for all larger values of \(m\). This allows us
to check whether the selected parameters remain effective as the problem size
increases. The selected efficient pairs obtained from this tuning step are listed in
Table~\ref{tab:omega-choice-params}. 

\begin{table}[htbp]
	\centering
\caption{Efficient pairs \((\phi_1,\phi_2)\) selected at \(m=100\) and \(\mu=4\)}
	\label{tab:omega-choice-params}
	\small
	\renewcommand{\arraystretch}{1.15}
	\setlength{\tabcolsep}{3pt}
	\begin{tabular*}{\textwidth}{@{\extracolsep{\fill}}ccccc@{}}
		\toprule
		\multirow{2}{*}{\(\Omega_1,\Omega_2\)}
		& \multicolumn{4}{c}{Selected efficient pair \((\phi_1,\phi_2)\)} \\
		\cmidrule(lr){2-5}
		& Example~\ref{Ex1} & Example~\ref{Ex2} & Example~\ref{Ex3} & Example~\ref{Ex4} \\
		\midrule
		
		\(\Omega_1=\phi_1 I,\ \Omega_2=\phi_2 L\)
		& \((-1.25,1.5)\) & \((-0.75,1.25)\) & \((-2,1.75)\) & \((-0.75,0.25)\) \\
		
		\(\Omega_1=\phi_1 I,\ \Omega_2=\phi_2 U\)
		& \((-0.75,2.25)\) & \((-1,2.75)\) & \((0.25,2.25)\) & \((-0.5,1)\) \\
		
		\(\Omega_1=\phi_1 I,\ \Omega_2=\phi_2 D\)
		& \((-3.25,-0.25)\) & \((-3.75,-0.25)\) & \((-3.5,-0.25)\) & \((-3,-0.25)\) \\
		
		\botrule
	\end{tabular*}
\end{table}

\begin{figure}[h]
	\centering
	\setlength{\tabcolsep}{1pt}
	\begin{tabular}{@{}cc@{}}
		\includegraphics[width=0.50\textwidth]{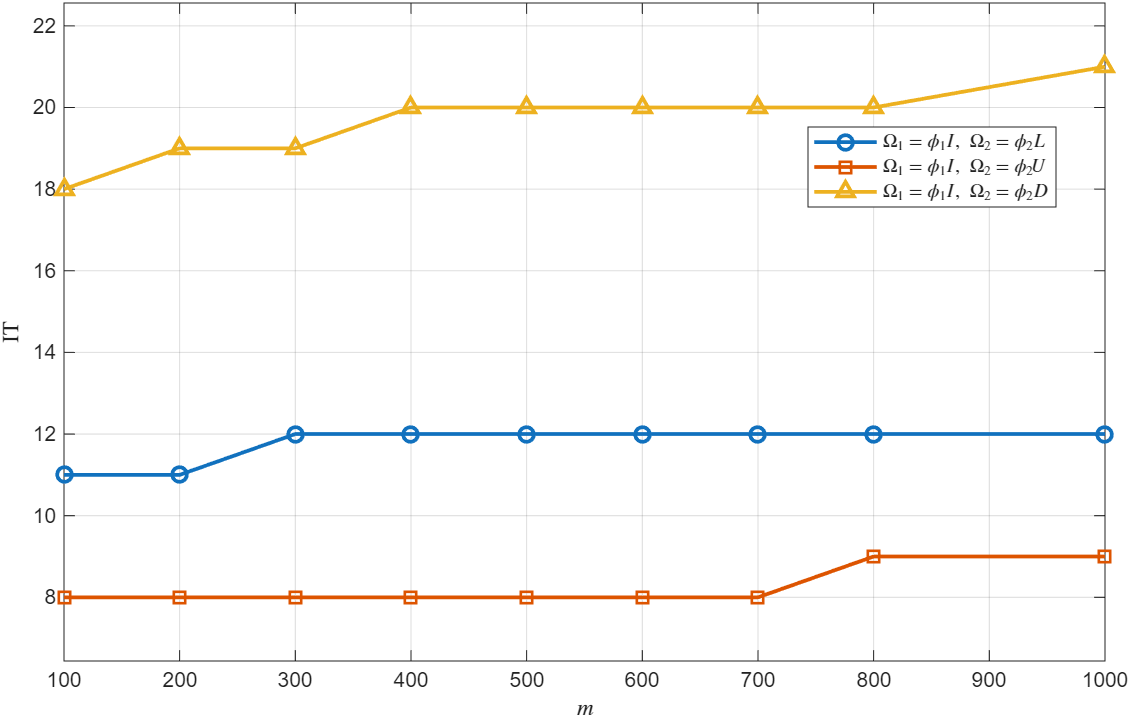}
		&
		\includegraphics[width=0.50\textwidth]{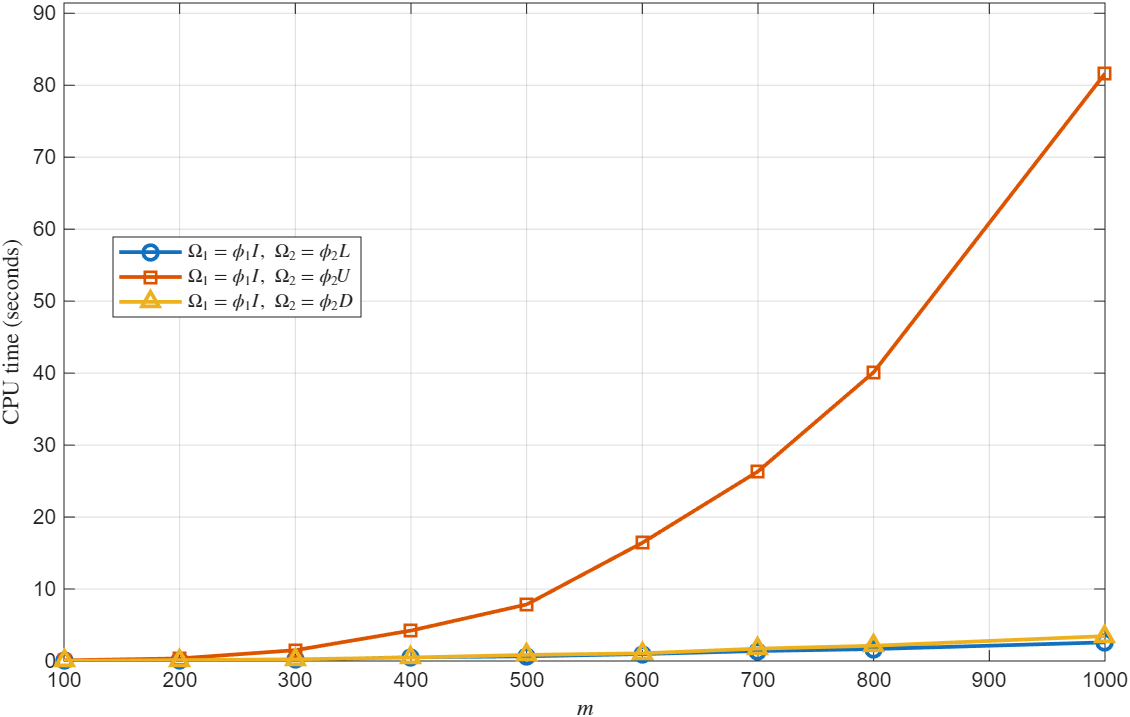}
	\end{tabular}
\caption{Sensitivity results for Example~\ref{Ex1}.}
	\label{fig:sensitivity-ex1}
\end{figure}

\begin{figure}[h]
	\centering
	\setlength{\tabcolsep}{1pt}
	\begin{tabular}{@{}cc@{}}
		\includegraphics[width=0.50\textwidth]{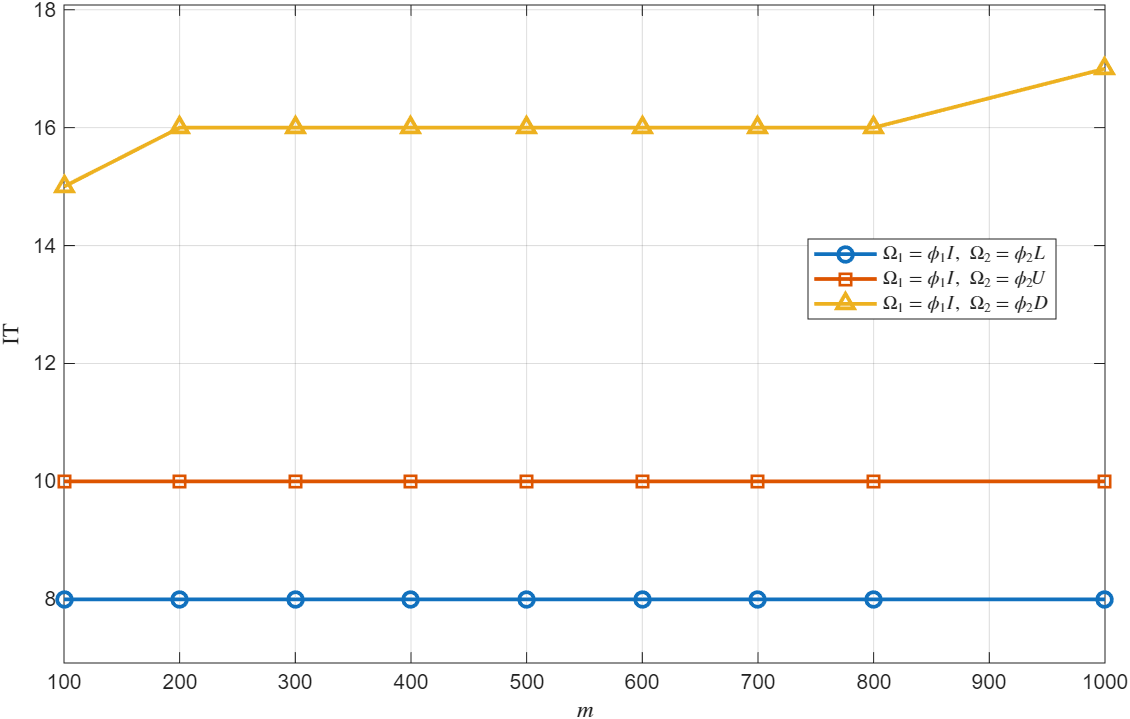}
		&
		\includegraphics[width=0.50\textwidth]{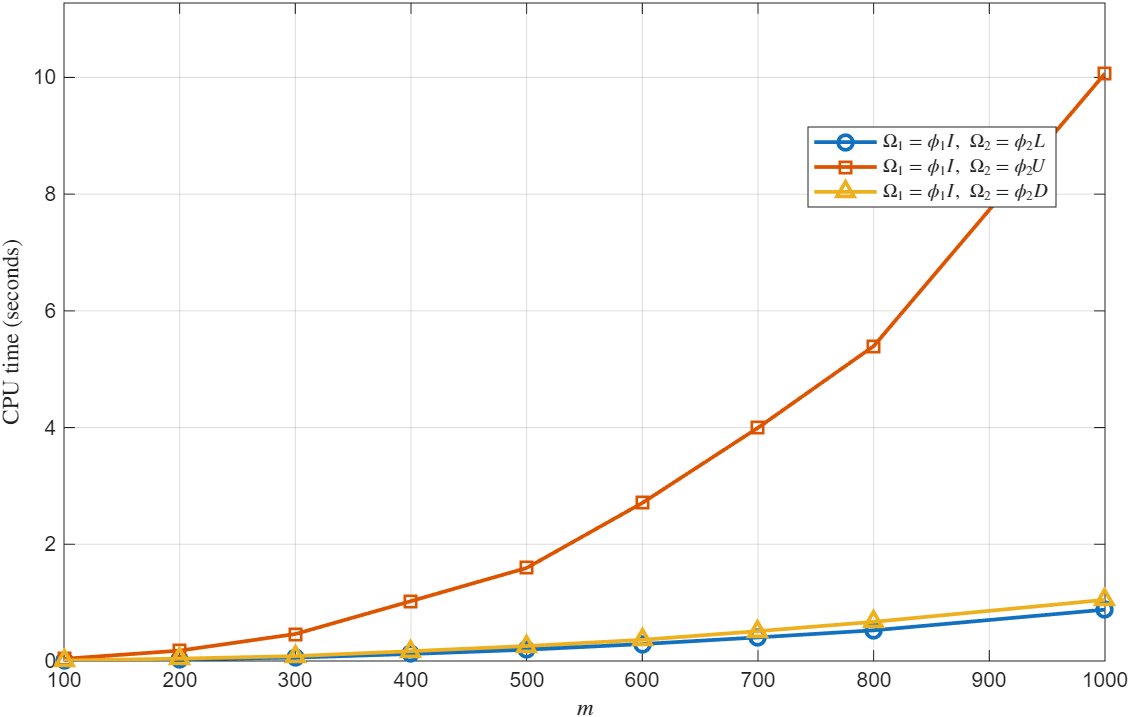}
	\end{tabular}
\caption{Sensitivity results for Example~\ref{Ex2}.}
	\label{fig:sensitivity-ex2}
\end{figure}

\begin{figure}[h]
	\centering
	\setlength{\tabcolsep}{1pt}
	\begin{tabular}{@{}cc@{}}
		\includegraphics[width=0.50\textwidth]{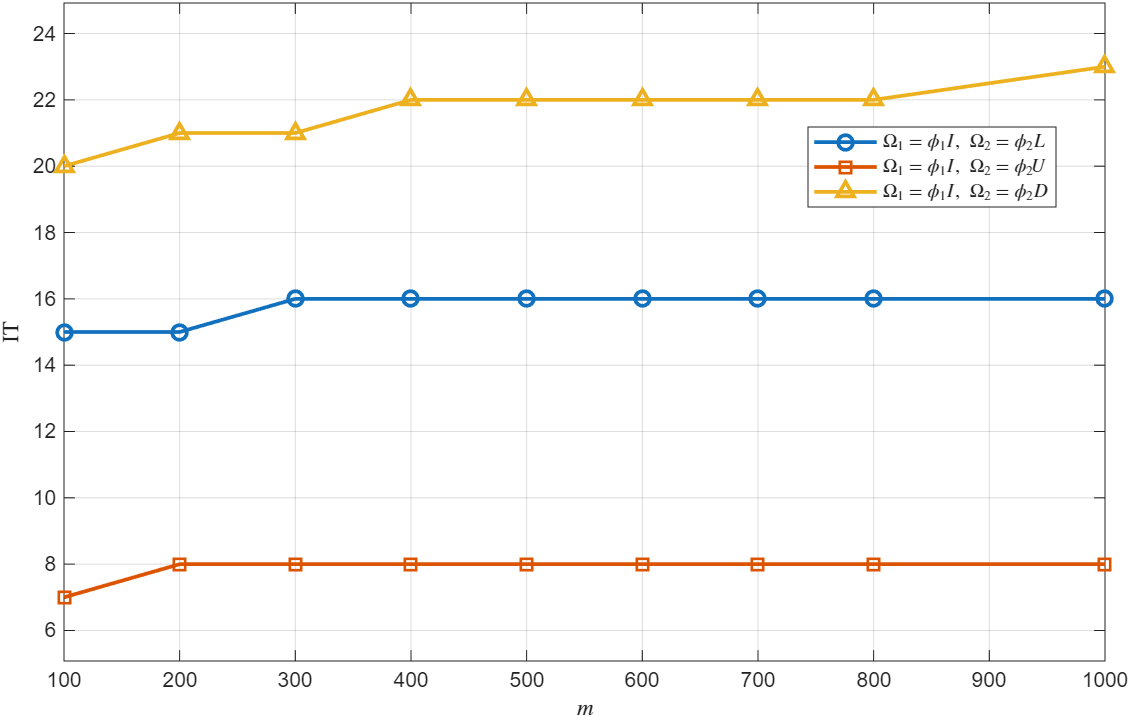}
		&
		\includegraphics[width=0.50\textwidth]{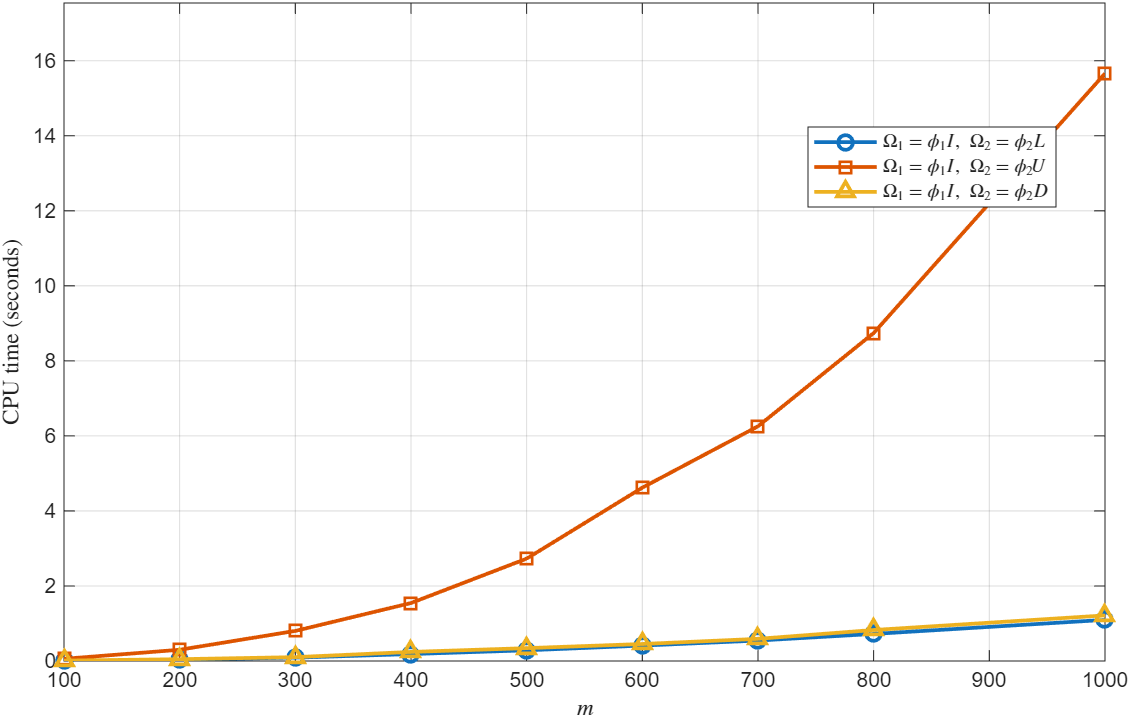}
	\end{tabular}
	\caption{Sensitivity results for Example~\ref{Ex3}.}
	\label{fig:sensitivity-ex3}
\end{figure}

\begin{figure}[h]
	\centering
	\setlength{\tabcolsep}{1pt}
	\begin{tabular}{@{}cc@{}}
		\includegraphics[width=0.50\textwidth]{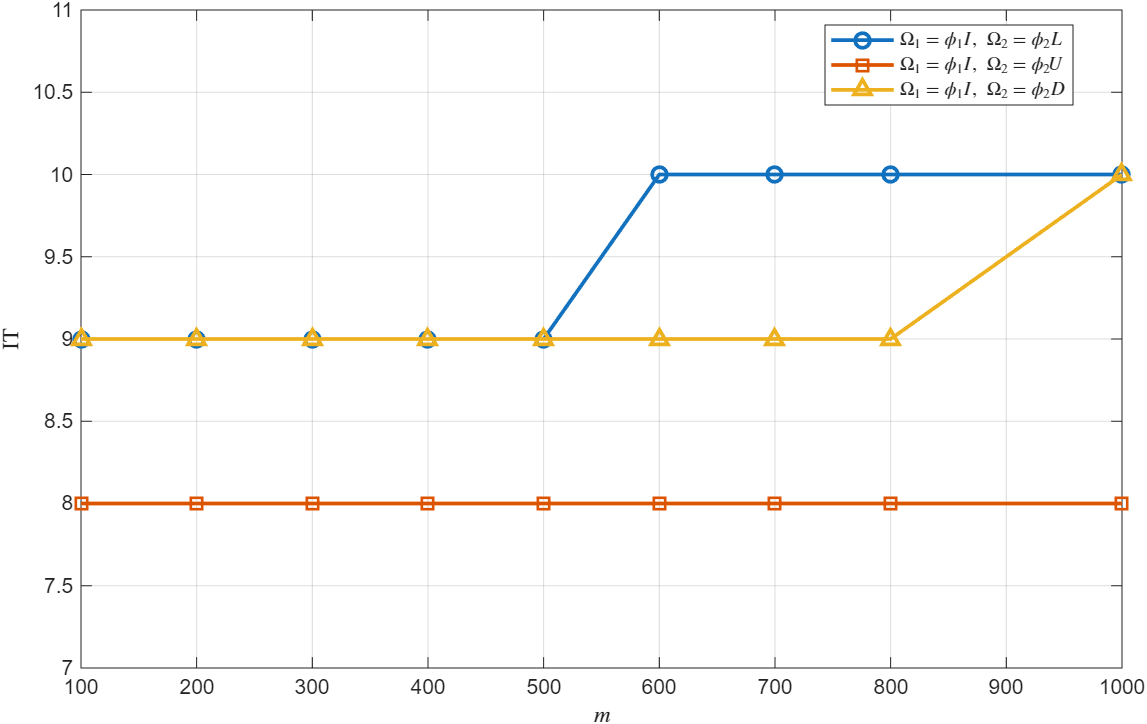}
		&
		\includegraphics[width=0.50\textwidth]{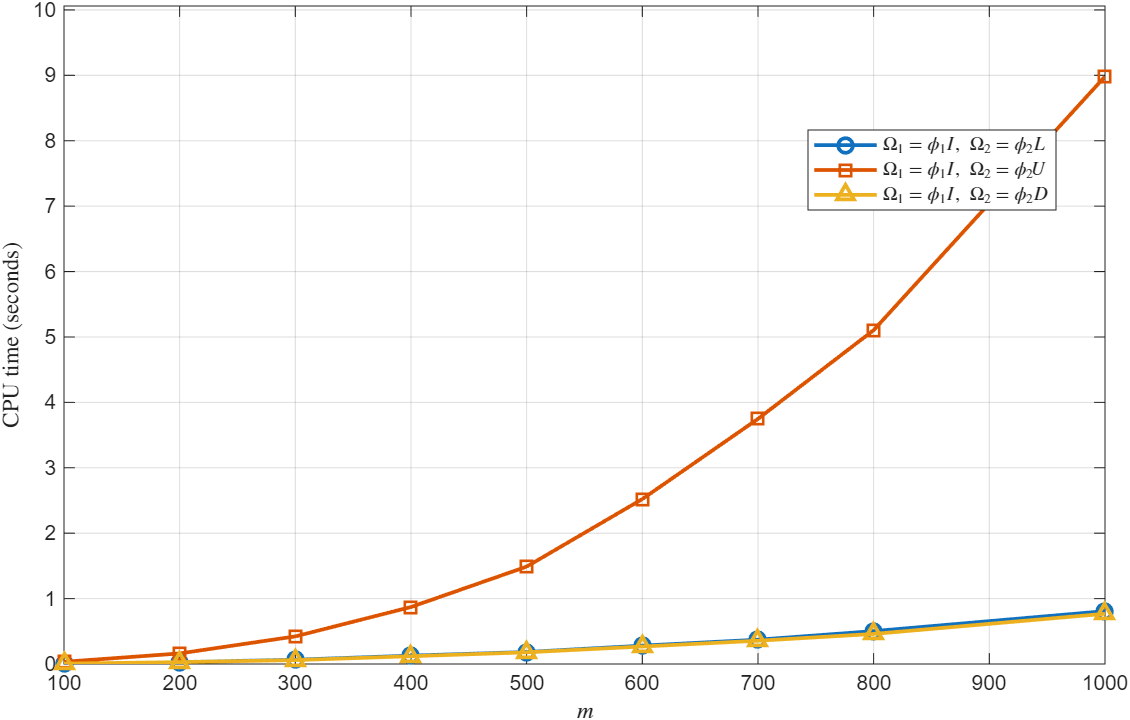}
	\end{tabular}
\caption{Sensitivity results for Example~\ref{Ex4}.}
	\label{fig:sensitivity-ex4}
\end{figure}

Figures~\ref{fig:sensitivity-ex1}--\ref{fig:sensitivity-ex4} show the
corresponding iteration counts and CPU times for all four examples with \(\mu=4\) as \(m\) increases. The results show that the choice
\(\Omega_1=\phi_1 I,\ \Omega_2=\phi_2 U\) often gives the smallest number of
iterations. However, this choice also leads to a much larger CPU time for large
values of \(m\). This happens because the upper triangular choice changes the left coefficient
matrix in the linear system solved at each iteration and makes the sparse
factorization more expensive.

On the other hand, the choice
\(\Omega_1=\phi_1 I,\ \Omega_2=\phi_2 L\), which is used in the proposed
Gauss-Seidel type method M4, gives a good balance between iteration count and CPU
time. The iteration count remains stable as \(m\) increases, and the CPU time
stays much smaller than that obtained with the upper triangular choice. The
diagonal choice \(\Omega_2=\phi_2 D\) is also stable, but in most cases it needs
more iterations.

Therefore, the lower triangular choice
\(\Omega_1=\phi_1 I,\ \Omega_2=\phi_2 L\) is used in the numerical experiments. It preserves the sparse triangular structure of the Gauss-Seidel
splitting and gives reliable performance for increasing problem sizes.

\subsection{Comparison between MAOR and RSMGS methods}
\label{ss:maor-rsmgs}

We now compare the proposed two-parameter RSMGS method with the two-parameter MAOR method obtained from Bai's MMS framework \cite{Bai2010}.

For the AOR-type splitting
\begin{equation*}
	M_{\alpha,\beta}
	=
	\frac{1}{\alpha}(D-\beta L),
	\qquad
	N_{\alpha,\beta}
	=
	\frac{1}{\alpha}
	\left[(1-\alpha)D+(\alpha-\beta)L+\alpha U\right],
\end{equation*}
the MAOR method corresponding to Bai's MMS framework can be written as
\begin{equation*}
	(D+\Omega-\beta L)x^{(k+1)}
	=
	\left((1-\alpha)D+(\alpha-\beta)L+\alpha U\right)x^{(k)}
	+
	(\Omega-\alpha A)|x^{(k)}|
	-
	2\alpha q,
\end{equation*}
where
$
	\Omega=\frac{1}{2\alpha}D.
$
Equivalently,
\begin{equation}
	\begin{aligned}
		\left(\left(1+\frac{1}{2\alpha}\right)D-\beta L\right)x^{(k+1)}
		={}&
		\left((1-\alpha)D+(\alpha-\beta)L+\alpha U\right)x^{(k)}  \\
		&+
		\left(\frac{1}{2\alpha}D-\alpha A\right)|x^{(k)}|
		-
		2\alpha q .
	\end{aligned}
	\label{eq:maor-comparison}
\end{equation}

On the other hand, the proposed RSMGS method uses the Gauss-Seidel splitting
\(M=D-L\), \(N=U\), together with the shifted matrices
\(\Omega_1=\phi_1I\), \(\Omega_2=\phi_2L\), \(\gamma=2\), and
\(\Omega_4=\frac12D\). Hence its iteration is
\begin{equation}
	\left(2D+\phi_1I-(1+\phi_2)L\right)x^{(k+1)}
	=
	\left(U+\phi_1I-\phi_2L\right)x^{(k)}
	+
	(D-A)|x^{(k)}|
	-
	2q.
	\label{eq:rsmgs-comparison}
\end{equation}

\begin{sidewaystable}[htbp]
	\centering
	\caption{Comparison between MAOR and RSMGS methods for \(\mu=4\) with
		\(\Omega_1=\phi_1I\) and \(\Omega_2=\phi_2L\)}
	\label{tab:maor-rsmgs-mu4}
	\scriptsize
	\renewcommand{\arraystretch}{1.08}
	\setlength{\tabcolsep}{3pt}
	\begin{tabular*}{\textwidth}{@{\extracolsep{\fill}}llclccccc@{}}
		\toprule
		Example & Method & Parameters &  & \multicolumn{5}{c}{$m$} \\
		\cmidrule(lr){5-9}
		& & & & 100 & 400 & 600 & 800 & 1000 \\
		\midrule
		
		\multirow{6}{*}{Example~\ref{Ex1}}
		& \multirow{3}{*}{MAOR}
		& \multirow{3}{*}{$(\alpha,\beta)=(0.80,2.50)$}
		& IT  & 12 & 12 & 13 & 13 & 13 \\
		& & & CPU & 0.0144 & 0.0988 & 0.2362 & 0.3840 & 0.6041 \\
		& & & RES & 4.6599e-06 & 8.4159e-06 & 2.6675e-06 & 3.2448e-06 & 3.8472e-06 \\
		& \multirow{3}{*}{RSMGS}
		& \multirow{3}{*}{$(\phi_1,\phi_2)=(-1.25,1.50)$}
		& IT  & 11 & 12 & 12 & 12 & 12 \\
		& & & CPU & 0.0204 & 0.1066 & 0.2168 & 0.3869 & 0.5809 \\
		& & & RES & 7.0378e-06 & 2.9705e-06 & 3.7634e-06 & 4.5938e-06 & 5.4445e-06 \\
		
		\addlinespace
		\multirow{6}{*}{Example~\ref{Ex2}}
		& \multirow{3}{*}{MAOR}
		& \multirow{3}{*}{$(\alpha,\beta)=(0.85,1.95)$}
		& IT  & 8 & 8 & 8 & 8 & 8 \\
		& & & CPU & 0.0042 & 0.0705 & 0.1407 & 0.2312 & 0.3809 \\
		& & & RES & 2.5411e-06 & 4.6393e-06 & 5.8208e-06 & 6.9348e-06 & 8.0095e-06 \\
		& \multirow{3}{*}{RSMGS}
		& \multirow{3}{*}{$(\phi_1,\phi_2)=(-0.75,1.25)$}
		& IT  & 8 & 8 & 8 & 8 & 8 \\
		& & & CPU & 0.0040 & 0.0626 & 0.1333 & 0.2387 & 0.4193 \\
		& & & RES & 1.2616e-06 & 1.9999e-06 & 2.3760e-06 & 2.7062e-06 & 3.0057e-06 \\
		
		\addlinespace
		\multirow{6}{*}{Example~\ref{Ex3}}
		& \multirow{3}{*}{MAOR}
		& \multirow{3}{*}{$(\alpha,\beta)=(0.88,2.54)$}
		& IT  & 15 & 16 & 16 & 17 & 17 \\
		& & & CPU & 0.0065 & 0.1310 & 0.2623 & 0.5020 & 0.7092 \\
		& & & RES & 6.3861e-06 & 5.8441e-06 & 8.5515e-06 & 3.4160e-06 & 4.2323e-06 \\
		& \multirow{3}{*}{RSMGS}
		& \multirow{3}{*}{$(\phi_1,\phi_2)=(-2,1.75)$}
		& IT  & 15 & 16 & 16 & 16 & 16 \\
		& & & CPU & 0.0097 & 0.1331 & 0.2632 & 0.4826 & 0.6720 \\
		& & & RES & 6.8608e-06 & 4.6944e-06 & 6.3721e-06 & 8.1400e-06 & 9.9501e-06 \\
		
		\addlinespace
		\multirow{6}{*}{Example~\ref{Ex4}}
		& \multirow{3}{*}{MAOR}
		& \multirow{3}{*}{$(\alpha,\beta)=(0.85,1.70)$}
		& IT  & 11 & 12 & 12 & 12 & 12 \\
		& & & CPU & 0.0052 & 0.1043 & 0.2017 & 0.3534 & 0.5583 \\
		& & & RES & 6.4676e-06 & 3.5363e-06 & 4.3339e-06 & 5.0068e-06 & 5.6001e-06 \\
		& \multirow{3}{*}{RSMGS}
		& \multirow{3}{*}{$(\phi_1,\phi_2)=(-0.75,0.25)$}
		& IT  & 9 & 9 & 10 & 10 & 10 \\
		& & & CPU & 0.0047 & 0.0806 & 0.1670 & 0.2850 & 0.4392 \\
		& & & RES & 4.2427e-06 & 8.4608e-06 & 1.9372e-06 & 2.2360e-06 & 2.4994e-06 \\
		
		\botrule
	\end{tabular*}
\end{sidewaystable}

Equations~\eqref{eq:maor-comparison} and \eqref{eq:rsmgs-comparison} show that
the two methods modify the iteration in different ways. In MAOR, the parameters
\(\alpha\) and \(\beta\) enter through the AOR splitting and also scale the
modulus term \((\Omega-\alpha A)|x^{(k)}|\) and the vector \(q\). In RSMGS, the
parameters \(\phi_1\) and \(\phi_2\) shift the Gauss-Seidel splitting itself:
they appear simultaneously in the coefficient matrix and in the linear part of
the right-hand side, while the modulus term remains \((D-A)|x^{(k)}|\). Thus,
RSMGS is not identical to MAOR, except in the unshifted case
\(\phi_1=\phi_2=0\) together with the corresponding Gauss-Seidel choice in the
MMS framework.

For a fair comparison, the best parameters of both methods are selected
experimentally at the fixed problem size \(m=100\) for each test problem. The
selected values of \((\alpha,\beta)\) for MAOR and \((\phi_1,\phi_2)\) for
RSMGS are then kept fixed for all larger values of \(m\).

Table~\ref{tab:maor-rsmgs-mu4} compares the two-parameter MAOR method with the
proposed two-parameter RSMGS method. For Examples~\ref{Ex1}--\ref{Ex3}, the two
methods show comparable performance, which is expected because both methods use
two adjustable parameters. However, the improvement obtained by RSMGS is more
pronounced for Example~\ref{Ex4}, which comes from a quasi-variational
inequality model related to a continuous optimal control problem. This indicates
that shifting the Gauss-Seidel splitting through \((\phi_1,\phi_2)\) can be
more effective than only relaxing the AOR-type splitting through
\((\alpha,\beta)\) for this class of problems. Hence, the proposed RSMGS method
is not merely a reformulation of MAOR; it changes the splitting mechanism and
can lead to different computational behavior.

\subsection{Results and Discussion}

Tables~\ref{tab:ex1}--\ref{tab:ex4} report the numerical results for the four
test problems with \(\mu=4\) and \(\mu=6\). The comparison is made in terms of
IT, CPU, and RES for \(m=100,400,600,800,1000\). The proposed method is denoted
by RSMGS, with the selected pair \((\phi_1,\phi_2)\) shown in each table.

For Example~\ref{Ex1}, RSMGS gives the smallest iteration counts among the
tested methods for both \(\mu=4\) and \(\mu=6\). Compared with MGS, MSOR,
NMGS, NPGS, and NPSOR, the reduction in IT is substantial, and the CPU time
remains competitive for all tested problem sizes. The MB-DS$_L$ method also
has relatively small iteration counts, but its CPU time grows rapidly as \(m\)
increases. RGTMSOR performs well, but RSMGS gives fewer iterations and smaller
CPU time for the largest problem sizes.

For the non-symmetric problem in Example~\ref{Ex2}, RSMGS again shows strong
performance. It gives the lowest iteration counts for both values of \(\mu\),
and its CPU time is among the smallest across all tested methods. The residuals
remain of order \(10^{-6}\), confirming that the observed improvement is not
obtained at the cost of accuracy. The same parameter pair
\((\phi_1,\phi_2)=(-0.75,1.25)\) is effective for both \(\mu=4\) and
\(\mu=6\).

Table~\ref{tab:ex3} shows that RSMGS is also effective for the sparse upper
block problem. For \(\mu=4\) and \(\mu=6\), it requires fewer iterations than
the other modulus-based and projected methods. Its CPU time remains low and
grows moderately with \(m\). This indicates that the shifted Gauss-Seidel
splitting remains stable even for the nonsymmetric sparse structure in
Example~\ref{Ex3}.

For the quasi-variational inequality problem in Example~\ref{Ex4}, the
advantage of RSMGS is particularly clear. The projected methods NPGS and NPSOR
require many more iterations, while MB-DS$_L$ has small iteration counts but
significantly larger CPU times for large \(m\). The proposed RSMGS method gives
low iteration counts and consistently small CPU times for both \(\mu=4\) and
\(\mu=6\). This confirms that the shifted splitting is effective not only for
standard block tridiagonal test problems, but also for the LCP arising from a
quasi-variational inequality model.

Overall, the numerical results show that RSMGS is efficient and robust for the
tested large-scale sparse LCPs. Across all four examples, it gives either the
best or a highly competitive performance in terms of IT and CPU, while
maintaining residuals of order \(10^{-6}\). The iteration counts remain nearly
stable as \(m\) increases, which indicates that the proposed shifted
Gauss-Seidel splitting is suitable for large-scale sparse problems.Together with the sensitivity analysis presented in
Figures~\ref{fig:sensitivity-ex1}--\ref{fig:sensitivity-ex4} and the MAOR
comparison reported in Table~\ref{tab:maor-rsmgs-mu4}, these results support
the effectiveness of the shifted-splitting parameters \((\phi_1,\phi_2)\).

\section{Conclusions and Future Work}\label{sec6}

In this paper, a relaxed shifted matrix-splitting modulus-based iteration
framework has been proposed for large-scale sparse linear complementarity
problems. Two shift matrices, \(\Omega_1\) and \(\Omega_2\), have been
introduced into the splitting without changing the original coefficient matrix
of the LCP. The resulting Gauss-Seidel-type scheme, denoted by RSMGS, has been
used in the numerical experiments.

Convergence results have been established for \(P\)-matrices and
\(H_+\)-matrices. For structured sparse matrices, practical AOR-type sufficient
conditions and explicit admissible intervals for the shift parameters
\((\phi_1,\phi_2)\) have also been derived. These theoretical intervals provide
guidance for the numerical parameter search, and the selected parameters have
been verified to satisfy the sufficient convergence condition for the tested
problems.

The numerical results show that RSMGS is efficient and robust for the tested
large-scale sparse problems in terms of iteration count, residual, and CPU
time. The sensitivity analysis supports the use of
\(\Omega_1=\phi_1I,\ \Omega_2=\phi_2L\), which preserves the sparse
Gauss-Seidel structure and gives a good balance between convergence speed and
computational cost. The direct comparison with MAOR also shows that the
proposed shifted splitting leads to a genuinely different iteration mechanism
and can improve the computational performance.

The selection of efficient values of \(\phi_1\) and \(\phi_2\) remains an
important practical issue. Although admissible intervals and a grid-search
procedure have been provided in this work, a fully analytical strategy for
determining optimal or nearly optimal shift parameters is still open. Future
work will focus on this question and on extending the shifted-splitting idea to
implicit, horizontal, and vertical linear complementarity problems.

\section*{Acknowledgements}
The first author acknowledges the support of the Ministry of Education, Government of India, through the GATE fellowship (Registration No. MA22S25023148).

\bibliography{sn-bibliography}


\begin{thebibliography}{44}
\ifx \bisbn   \undefined \def \bisbn  #1{ISBN #1}\fi
\ifx \binits  \undefined \def \binits#1{#1}\fi
\ifx \bauthor  \undefined \def \bauthor#1{#1}\fi
\ifx \batitle  \undefined \def \batitle#1{#1}\fi
\ifx \bjtitle  \undefined \def \bjtitle#1{#1}\fi
\ifx \bvolume  \undefined \def \bvolume#1{\textbf{#1}}\fi
\ifx \byear  \undefined \def \byear#1{#1}\fi
\ifx \bissue  \undefined \def \bissue#1{#1}\fi
\ifx \bfpage  \undefined \def \bfpage#1{#1}\fi
\ifx \blpage  \undefined \def \blpage #1{#1}\fi
\ifx \burl  \undefined \def \burl#1{\textsf{#1}}\fi
\ifx \doiurl  \undefined \def \doiurl#1{\url{https://doi.org/#1}}\fi
\ifx \betal  \undefined \def \betal{\textit{et al.}}\fi
\ifx \binstitute  \undefined \def \binstitute#1{#1}\fi
\ifx \binstitutionaled  \undefined \def \binstitutionaled#1{#1}\fi
\ifx \bctitle  \undefined \def \bctitle#1{#1}\fi
\ifx \beditor  \undefined \def \beditor#1{#1}\fi
\ifx \bpublisher  \undefined \def \bpublisher#1{#1}\fi
\ifx \bbtitle  \undefined \def \bbtitle#1{#1}\fi
\ifx \bedition  \undefined \def \bedition#1{#1}\fi
\ifx \bseriesno  \undefined \def \bseriesno#1{#1}\fi
\ifx \blocation  \undefined \def \blocation#1{#1}\fi
\ifx \bsertitle  \undefined \def \bsertitle#1{#1}\fi
\ifx \bsnm \undefined \def \bsnm#1{#1}\fi
\ifx \bsuffix \undefined \def \bsuffix#1{#1}\fi
\ifx \bparticle \undefined \def \bparticle#1{#1}\fi
\ifx \barticle \undefined \def \barticle#1{#1}\fi
\bibcommenthead
\ifx \bconfdate \undefined \def \bconfdate #1{#1}\fi
\ifx \botherref \undefined \def \botherref #1{#1}\fi
\ifx \url \undefined \def \url#1{\textsf{#1}}\fi
\ifx \bchapter \undefined \def \bchapter#1{#1}\fi
\ifx \bbook \undefined \def \bbook#1{#1}\fi
\ifx \bcomment \undefined \def \bcomment#1{#1}\fi
\ifx \oauthor \undefined \def \oauthor#1{#1}\fi
\ifx \citeauthoryear \undefined \def \citeauthoryear#1{#1}\fi
\ifx \endbibitem  \undefined \def \endbibitem {}\fi
\ifx \bconflocation  \undefined \def \bconflocation#1{#1}\fi
\ifx \arxivurl  \undefined \def \arxivurl#1{\textsf{#1}}\fi
\csname PreBibitemsHook\endcsname

\bibitem[\protect\citeauthoryear{Murty}{1988}]{Murty1988}
\begin{bbook}
\bauthor{\bsnm{Murty}, \binits{K.G.}}:
\bbtitle{Linear Complementarity, Linear and Nonlinear Programming}.
\bpublisher{Heldermann Verlag},
\blocation{Berlin}
(\byear{1988})
\end{bbook}
\endbibitem

\bibitem[\protect\citeauthoryear{Cottle et~al.}{2009}]{CottlePangStone2009}
\begin{bbook}
\bauthor{\bsnm{Cottle}, \binits{R.W.}},
\bauthor{\bsnm{Pang}, \binits{J.-S.}},
\bauthor{\bsnm{Stone}, \binits{R.E.}}:
\bbtitle{The Linear Complementarity Problem},
\bedition{Classics in applied mathematics} edn.
\bpublisher{Society for Industrial and Applied Mathematics},
\blocation{Philadelphia}
(\byear{2009}).
\doiurl{10.1137/1.9780898719000}
\end{bbook}
\endbibitem

\bibitem[\protect\citeauthoryear{Shi et~al.}{2016}]{ShiYangHuang2016}
\begin{barticle}
\bauthor{\bsnm{Shi}, \binits{X.-J.}},
\bauthor{\bsnm{Yang}, \binits{L.}},
\bauthor{\bsnm{Huang}, \binits{Z.-H.}}:
\batitle{A fixed point method for the linear complementarity problem arising
  from american option pricing}.
\bjtitle{Acta Mathematicae Applicatae Sinica, English Series}
\bvolume{32},
\bfpage{921}--\blpage{932}
(\byear{2016})
\doiurl{10.1007/s10255-016-0613-6}
\end{barticle}
\endbibitem

\bibitem[\protect\citeauthoryear{Lemke}{1965}]{Lemke1965}
\begin{barticle}
\bauthor{\bsnm{Lemke}, \binits{C.E.}}:
\batitle{Bimatrix equilibrium points and mathematical programming}.
\bjtitle{Management Science}
\bvolume{11}(\bissue{7}),
\bfpage{681}--\blpage{689}
(\byear{1965})
\doiurl{10.1287/mnsc.11.7.681}
\end{barticle}
\endbibitem

\bibitem[\protect\citeauthoryear{Cottle and Dantzig}{1968}]{CottleDantzig1968}
\begin{barticle}
\bauthor{\bsnm{Cottle}, \binits{R.W.}},
\bauthor{\bsnm{Dantzig}, \binits{G.B.}}:
\batitle{Complementary pivot theory of mathematical programming}.
\bjtitle{Linear Algebra and its Applications}
\bvolume{1}(\bissue{1}),
\bfpage{103}--\blpage{125}
(\byear{1968})
\doiurl{10.1016/0024-3795(68)90052-9}
\end{barticle}
\endbibitem

\bibitem[\protect\citeauthoryear{Kojima et~al.}{1992}]{KojimaMegiddoYe1992}
\begin{barticle}
\bauthor{\bsnm{Kojima}, \binits{M.}},
\bauthor{\bsnm{Megiddo}, \binits{N.}},
\bauthor{\bsnm{Ye}, \binits{Y.}}:
\batitle{An interior point potential reduction algorithm for the linear
  complementarity problem}.
\bjtitle{Mathematical Programming}
\bvolume{54},
\bfpage{267}--\blpage{279}
(\byear{1992})
\doiurl{10.1007/BF01586054}
\end{barticle}
\endbibitem

\bibitem[\protect\citeauthoryear{Cryer}{1971}]{Cryer1971}
\begin{barticle}
\bauthor{\bsnm{Cryer}, \binits{C.W.}}:
\batitle{The solution of a quadratic programming problem using systematic
  overrelaxation}.
\bjtitle{SIAM Journal on Control}
\bvolume{9}(\bissue{3}),
\bfpage{385}--\blpage{392}
(\byear{1971})
\doiurl{10.1137/0309028}
\end{barticle}
\endbibitem

\bibitem[\protect\citeauthoryear{Mangasarian}{1977}]{Mangasarian1977}
\begin{barticle}
\bauthor{\bsnm{Mangasarian}, \binits{O.L.}}:
\batitle{Solution of symmetric linear complementarity problems by iterative
  methods}.
\bjtitle{Journal of Optimization Theory and Applications}
\bvolume{22}(\bissue{4}),
\bfpage{465}--\blpage{485}
(\byear{1977})
\doiurl{10.1007/BF01268170}
\end{barticle}
\endbibitem

\bibitem[\protect\citeauthoryear{Ahn}{1981}]{Ahn1981}
\begin{barticle}
\bauthor{\bsnm{Ahn}, \binits{B.H.}}:
\batitle{Solution of nonsymmetric linear complementarity problems by iterative
  methods}.
\bjtitle{Journal of Optimization Theory and Applications}
\bvolume{33}(\bissue{2}),
\bfpage{175}--\blpage{185}
(\byear{1981})
\doiurl{10.1007/BF00935545}
\end{barticle}
\endbibitem

\bibitem[\protect\citeauthoryear{Pang}{1984}]{Pang1984}
\begin{barticle}
\bauthor{\bsnm{Pang}, \binits{J.S.}}:
\batitle{Necessary and sufficient conditions for the convergence of iterative
  methods for the linear complementarity problem}.
\bjtitle{Journal of Optimization Theory and Applications}
\bvolume{42}(\bissue{1}),
\bfpage{1}--\blpage{17}
(\byear{1984})
\doiurl{10.1007/BF00934130}
\end{barticle}
\endbibitem

\bibitem[\protect\citeauthoryear{Yuan and Song}{2003}]{YuanSong2003}
\begin{barticle}
\bauthor{\bsnm{Yuan}, \binits{D.}},
\bauthor{\bsnm{Song}, \binits{Y.}}:
\batitle{Modified {AOR} methods for linear complementarity problem}.
\bjtitle{Applied Mathematics and Computation}
\bvolume{140}(\bissue{1}),
\bfpage{53}--\blpage{67}
(\byear{2003})
\doiurl{10.1016/S0096-3003(02)00194-7}
\end{barticle}
\endbibitem

\bibitem[\protect\citeauthoryear{Li and Dai}{2007}]{LiDai2007}
\begin{barticle}
\bauthor{\bsnm{Li}, \binits{Y.}},
\bauthor{\bsnm{Dai}, \binits{P.}}:
\batitle{Generalized {AOR} methods for linear complementarity problem}.
\bjtitle{Applied Mathematics and Computation}
\bvolume{188}(\bissue{1}),
\bfpage{7}--\blpage{18}
(\byear{2007})
\doiurl{10.1016/j.amc.2006.09.067}
\end{barticle}
\endbibitem

\bibitem[\protect\citeauthoryear{Dehghan and
  Hajarian}{2009}]{DehghanHajarian2009}
\begin{barticle}
\bauthor{\bsnm{Dehghan}, \binits{M.}},
\bauthor{\bsnm{Hajarian}, \binits{M.}}:
\batitle{Convergence of {SSOR} methods for linear complementarity problems}.
\bjtitle{Operations Research Letters}
\bvolume{37}(\bissue{3}),
\bfpage{219}--\blpage{223}
(\byear{2009})
\doiurl{10.1016/j.orl.2009.01.013}
\end{barticle}
\endbibitem

\bibitem[\protect\citeauthoryear{van Bokhoven}{1981}]{vanBokhoven1981}
\begin{botherref}
\oauthor{\bsnm{Bokhoven}, \binits{W.M.G.}}:
Piecewise-linear modelling and analysis.
PhD thesis,
Technische Hogeschool Eindhoven
(1981).
\doiurl{10.6100/IR118197}
\end{botherref}
\endbibitem

\bibitem[\protect\citeauthoryear{Dong and Jiang}{2009}]{DongJiang2009}
\begin{barticle}
\bauthor{\bsnm{Dong}, \binits{J.L.}},
\bauthor{\bsnm{Jiang}, \binits{M.Q.}}:
\batitle{A modified modulus method for symmetric positive-definite linear
  complementarity problems}.
\bjtitle{Numerical Linear Algebra with Applications}
\bvolume{16}(\bissue{2}),
\bfpage{129}--\blpage{143}
(\byear{2009})
\doiurl{10.1002/nla.609}
\end{barticle}
\endbibitem

\bibitem[\protect\citeauthoryear{Hadjidimos and
  Tzoumas}{2009}]{HadjidimosTzoumas2009}
\begin{barticle}
\bauthor{\bsnm{Hadjidimos}, \binits{A.}},
\bauthor{\bsnm{Tzoumas}, \binits{M.}}:
\batitle{Nonstationary extrapolated modulus algorithms for the solution of the
  linear complementarity problem}.
\bjtitle{Linear Algebra and its Applications}
\bvolume{431}(\bissue{1-2}),
\bfpage{197}--\blpage{210}
(\byear{2009})
\doiurl{10.1016/j.laa.2009.02.024}
\end{barticle}
\endbibitem

\bibitem[\protect\citeauthoryear{Bai}{2010}]{Bai2010}
\begin{barticle}
\bauthor{\bsnm{Bai}, \binits{Z.-Z.}}:
\batitle{Modulus-based matrix splitting iteration methods for linear
  complementarity problems}.
\bjtitle{Numerical Linear Algebra with Applications}
\bvolume{17}(\bissue{6}),
\bfpage{917}--\blpage{933}
(\byear{2010})
\doiurl{10.1002/nla.680}
\end{barticle}
\endbibitem

\bibitem[\protect\citeauthoryear{Zhang}{2011}]{Zhang2011}
\begin{barticle}
\bauthor{\bsnm{Zhang}, \binits{L.-L.}}:
\batitle{Two-step modulus-based matrix splitting iteration method for linear
  complementarity problems}.
\bjtitle{Numerical Algorithms}
\bvolume{57}(\bissue{1}),
\bfpage{83}--\blpage{99}
(\byear{2011})
\doiurl{10.1007/s11075-010-9416-7}
\end{barticle}
\endbibitem

\bibitem[\protect\citeauthoryear{Zheng and Yin}{2013}]{ZhengYin2013}
\begin{barticle}
\bauthor{\bsnm{Zheng}, \binits{N.}},
\bauthor{\bsnm{Yin}, \binits{J.F.}}:
\batitle{Accelerated modulus-based matrix splitting iteration methods for
  linear complementarity problem}.
\bjtitle{Numerical Algorithms}
\bvolume{64}(\bissue{2}),
\bfpage{245}--\blpage{262}
(\byear{2013})
\doiurl{10.1007/s11075-012-9664-9}
\end{barticle}
\endbibitem

\bibitem[\protect\citeauthoryear{Bai and Zhang}{2013a}]{BaiZhang2013a}
\begin{barticle}
\bauthor{\bsnm{Bai}, \binits{Z.-Z.}},
\bauthor{\bsnm{Zhang}, \binits{L.-L.}}:
\batitle{Modulus-based synchronous multisplitting iteration methods for linear
  complementarity problems}.
\bjtitle{Numerical Linear Algebra with Applications}
\bvolume{20}(\bissue{3}),
\bfpage{425}--\blpage{439}
(\byear{2013})
\doiurl{10.1002/nla.1835}
\end{barticle}
\endbibitem

\bibitem[\protect\citeauthoryear{Bai and Zhang}{2013b}]{BaiZhang2013b}
\begin{barticle}
\bauthor{\bsnm{Bai}, \binits{Z.-Z.}},
\bauthor{\bsnm{Zhang}, \binits{L.-L.}}:
\batitle{Modulus-based synchronous two-stage multisplitting iteration methods
  for linear complementarity problems}.
\bjtitle{Numerical Algorithms}
\bvolume{62},
\bfpage{59}--\blpage{77}
(\byear{2013})
\doiurl{10.1007/s11075-012-9566-x}
\end{barticle}
\endbibitem

\bibitem[\protect\citeauthoryear{Li}{2013}]{Li2013}
\begin{barticle}
\bauthor{\bsnm{Li}, \binits{W.}}:
\batitle{A general modulus-based matrix splitting method for linear
  complementarity problems of {H}-matrices}.
\bjtitle{Applied Mathematics Letters}
\bvolume{26}(\bissue{12}),
\bfpage{1159}--\blpage{1164}
(\byear{2013})
\doiurl{10.1016/j.aml.2013.06.015}
\end{barticle}
\endbibitem

\bibitem[\protect\citeauthoryear{Xu}{2015}]{Xu2015}
\begin{barticle}
\bauthor{\bsnm{Xu}, \binits{W.-W.}}:
\batitle{Modified modulus-based matrix splitting iteration methods for linear
  complementarity problems}.
\bjtitle{Numerical Linear Algebra with Applications}
\bvolume{22}(\bissue{4}),
\bfpage{748}--\blpage{760}
(\byear{2015})
\doiurl{10.1002/nla.1985}
\end{barticle}
\endbibitem

\bibitem[\protect\citeauthoryear{Li and Zheng}{2016}]{LiZheng2016}
\begin{barticle}
\bauthor{\bsnm{Li}, \binits{W.}},
\bauthor{\bsnm{Zheng}, \binits{H.}}:
\batitle{A preconditioned modulus-based iteration method for solving linear
  complementarity problems of {H}-matrices}.
\bjtitle{Linear and Multilinear Algebra}
\bvolume{64}(\bissue{7}),
\bfpage{1390}--\blpage{1403}
(\byear{2016})
\doiurl{10.1080/03081087.2015.1087457}
\end{barticle}
\endbibitem

\bibitem[\protect\citeauthoryear{Wu and Li}{2016}]{WuLi2016}
\begin{barticle}
\bauthor{\bsnm{Wu}, \binits{S.-L.}},
\bauthor{\bsnm{Li}, \binits{C.-X.}}:
\batitle{Two-sweep modulus-based matrix splitting iteration methods for linear
  complementarity problems}.
\bjtitle{Journal of Computational and Applied Mathematics}
\bvolume{302},
\bfpage{327}--\blpage{339}
(\byear{2016})
\doiurl{10.1016/j.cam.2016.02.011}
\end{barticle}
\endbibitem

\bibitem[\protect\citeauthoryear{Wu and Li}{2022}]{Wu2022}
\begin{barticle}
\bauthor{\bsnm{Wu}, \binits{S.-L.}},
\bauthor{\bsnm{Li}, \binits{C.-X.}}:
\batitle{A class of new modulus-based matrix splitting methods for linear
  complementarity problem}.
\bjtitle{Optimization Letters}
\bvolume{16},
\bfpage{1427}--\blpage{1443}
(\byear{2022})
\doiurl{10.1007/s11590-021-01781-6}
\end{barticle}
\endbibitem

\bibitem[\protect\citeauthoryear{Kumar
  et~al.}{2023}]{KumarDeepmalaDuttaDas2023}
\begin{barticle}
\bauthor{\bsnm{Kumar}, \binits{B.}},
\bauthor{\bsnm{Deepmala}},
\bauthor{\bsnm{Dutta}, \binits{A.}},
\bauthor{\bsnm{Das}, \binits{A.K.}}:
\batitle{More on matrix splitting modulus-based iterative methods for solving
  linear complementarity problem}.
\bjtitle{OPSEARCH}
\bvolume{60},
\bfpage{1003}--\blpage{1020}
(\byear{2023})
\doiurl{10.1007/s12597-023-00634-3}
\end{barticle}
\endbibitem

\bibitem[\protect\citeauthoryear{Fang and Zhu}{2019}]{Fang2019}
\begin{barticle}
\bauthor{\bsnm{Fang}, \binits{X.-M.}},
\bauthor{\bsnm{Zhu}, \binits{Z.-W.}}:
\batitle{The modulus-based matrix double splitting iteration method for linear
  complementarity problems}.
\bjtitle{Computers \& Mathematics with Applications}
\bvolume{78}(\bissue{11}),
\bfpage{3633}--\blpage{3643}
(\byear{2019})
\doiurl{10.1016/j.camwa.2019.06.012}
\end{barticle}
\endbibitem

\bibitem[\protect\citeauthoryear{Zheng et~al.}{2017}]{ZhengLiVong2017}
\begin{barticle}
\bauthor{\bsnm{Zheng}, \binits{H.}},
\bauthor{\bsnm{Li}, \binits{W.}},
\bauthor{\bsnm{Vong}, \binits{S.}}:
\batitle{A relaxation modulus-based matrix splitting iteration method for
  solving linear complementarity problems}.
\bjtitle{Numerical Algorithms}
\bvolume{74},
\bfpage{137}--\blpage{152}
(\byear{2017})
\doiurl{10.1007/s11075-016-0142-7}
\end{barticle}
\endbibitem

\bibitem[\protect\citeauthoryear{Wen et~al.}{2018}]{WenZhengLiPeng2018}
\begin{barticle}
\bauthor{\bsnm{Wen}, \binits{B.-L.}},
\bauthor{\bsnm{Zheng}, \binits{H.}},
\bauthor{\bsnm{Li}, \binits{W.}},
\bauthor{\bsnm{Peng}, \binits{X.-F.}}:
\batitle{The relaxation modulus-based matrix splitting iteration method for
  solving linear complementarity problems of positive definite matrices}.
\bjtitle{Applied Mathematics and Computation}
\bvolume{321},
\bfpage{349}--\blpage{357}
(\byear{2018})
\doiurl{10.1016/j.amc.2017.10.064}
\end{barticle}
\endbibitem

\bibitem[\protect\citeauthoryear{Ren et~al.}{2019}]{RenWangTangWang2019}
\begin{barticle}
\bauthor{\bsnm{Ren}, \binits{H.}},
\bauthor{\bsnm{Wang}, \binits{X.}},
\bauthor{\bsnm{Tang}, \binits{X.-B.}},
\bauthor{\bsnm{Wang}, \binits{T.}}:
\batitle{A preconditioned general two-step modulus-based matrix splitting
  iteration method for linear complementarity problems of {H+}-matrices}.
\bjtitle{Numerical Algorithms}
\bvolume{82},
\bfpage{969}--\blpage{986}
(\byear{2019})
\doiurl{10.1007/s11075-018-0637-5}
\end{barticle}
\endbibitem

\bibitem[\protect\citeauthoryear{Huang and Cui}{2023}]{HuangCui2023}
\begin{barticle}
\bauthor{\bsnm{Huang}, \binits{Z.}},
\bauthor{\bsnm{Cui}, \binits{J.}}:
\batitle{The double-relaxation modulus-based matrix splitting iteration method
  for linear complementarity problems}.
\bjtitle{Journal of Computational and Applied Mathematics}
\bvolume{427},
\bfpage{115138}
(\byear{2023})
\doiurl{10.1016/j.cam.2023.115138}
\end{barticle}
\endbibitem

\bibitem[\protect\citeauthoryear{Liu et~al.}{2025}]{LiuWangHuang2025}
\begin{barticle}
\bauthor{\bsnm{Liu}, \binits{W.}},
\bauthor{\bsnm{Wang}, \binits{Y.}},
\bauthor{\bsnm{Huang}, \binits{Z.-H.}}:
\batitle{Improved {G}auss-{S}eidel type and {J}acobi type methods for linear
  complementarity problems}.
\bjtitle{Computational Optimization and Applications}
\bvolume{92},
\bfpage{683}--\blpage{708}
(\byear{2025})
\doiurl{10.1007/s10589-025-00714-8}
\end{barticle}
\endbibitem

\bibitem[\protect\citeauthoryear{Liu and Zhang}{2026}]{LiuZhang2026}
\begin{barticle}
\bauthor{\bsnm{Liu}, \binits{Y.}},
\bauthor{\bsnm{Zhang}, \binits{J.-J.}}:
\batitle{Acceleration of the modulus-based iteration methods for solving linear
  complementarity problems by momentum}.
\bjtitle{Computational and Applied Mathematics}
\bvolume{45}(\bissue{4}),
\bfpage{135}
(\byear{2026})
\doiurl{10.1007/s40314-025-03528-w}
\end{barticle}
\endbibitem

\bibitem[\protect\citeauthoryear{Cvetkovi{\'c} et~al.}{2014}]{Cvetkovic2014}
\begin{barticle}
\bauthor{\bsnm{Cvetkovi{\'c}}, \binits{L.}},
\bauthor{\bsnm{Hadjidimos}, \binits{A.}},
\bauthor{\bsnm{Kosti{\'c}}, \binits{V.}}:
\batitle{On the choice of parameters in {MAOR} type splitting methods for the
  linear complementarity problem}.
\bjtitle{Numerical Algorithms}
\bvolume{67},
\bfpage{793}--\blpage{806}
(\byear{2014})
\doiurl{10.1007/s11075-014-9824-1}
\end{barticle}
\endbibitem

\bibitem[\protect\citeauthoryear{Bai et~al.}{2006}]{BaiYinSu2006}
\begin{barticle}
\bauthor{\bsnm{Bai}, \binits{Z.-Z.}},
\bauthor{\bsnm{Yin}, \binits{J.-F.}},
\bauthor{\bsnm{Su}, \binits{Y.-F.}}:
\batitle{A shift-splitting preconditioner for non-hermitian positive definite
  matrices}.
\bjtitle{Journal of Computational Mathematics}
\bvolume{24}(\bissue{4}),
\bfpage{539}--\blpage{552}
(\byear{2006})
\end{barticle}
\endbibitem

\bibitem[\protect\citeauthoryear{Cao et~al.}{2014}]{CaoEtAl2014}
\begin{barticle}
\bauthor{\bsnm{Cao}, \binits{Y.}},
\bauthor{\bsnm{Du}, \binits{J.}},
\bauthor{\bsnm{Niu}, \binits{Q.}}:
\batitle{Shift-splitting preconditioners for saddle point problems}.
\bjtitle{Journal of Computational and Applied Mathematics}
\bvolume{272},
\bfpage{239}--\blpage{250}
(\byear{2014})
\doiurl{10.1016/j.cam.2014.05.017}
\end{barticle}
\endbibitem

\bibitem[\protect\citeauthoryear{Li and Ma}{2019}]{LiMa2019}
\begin{barticle}
\bauthor{\bsnm{Li}, \binits{C.-L.}},
\bauthor{\bsnm{Ma}, \binits{C.-F.}}:
\batitle{Efficient parameterized rotated shift-splitting preconditioner for a
  class of complex symmetric linear systems}.
\bjtitle{Numerical Algorithms}
\bvolume{80},
\bfpage{337}--\blpage{354}
(\byear{2019})
\doiurl{10.1007/s11075-018-0487-1}
\end{barticle}
\endbibitem

\bibitem[\protect\citeauthoryear{Berman and
  Plemmons}{1994}]{BermanPlemmons1994}
\begin{bbook}
\bauthor{\bsnm{Berman}, \binits{A.}},
\bauthor{\bsnm{Plemmons}, \binits{R.J.}}:
\bbtitle{Nonnegative Matrices in the Mathematical Sciences}.
\bpublisher{Society for Industrial and Applied Mathematics},
\blocation{Philadelphia}
(\byear{1994}).
\doiurl{10.1137/1.9781611971262}
\end{bbook}
\endbibitem

\bibitem[\protect\citeauthoryear{Varga}{2000}]{Varga2000}
\begin{bbook}
\bauthor{\bsnm{Varga}, \binits{R.S.}}:
\bbtitle{Matrix Iterative Analysis},
\bedition{2}nd edn.
\bpublisher{Springer},
\blocation{Berlin}
(\byear{2000}).
\doiurl{10.1007/978-3-642-05156-2}
\end{bbook}
\endbibitem

\bibitem[\protect\citeauthoryear{Horn and Johnson}{2013}]{Horn2013}
\begin{bbook}
\bauthor{\bsnm{Horn}, \binits{R.A.}},
\bauthor{\bsnm{Johnson}, \binits{C.R.}}:
\bbtitle{Matrix Analysis},
\bedition{2}nd edn.
\bpublisher{Cambridge University Press},
\blocation{Cambridge}
(\byear{2013})
\end{bbook}
\endbibitem

\bibitem[\protect\citeauthoryear{Das et~al.}{2025}]{Das2025}
\begin{barticle}
\bauthor{\bsnm{Das}, \binits{A.K.}},
\bauthor{\bsnm{Deepmala}},
\bauthor{\bsnm{Kumar}, \binits{B.}}:
\batitle{More on projected type iteration method and linear complementarity
  problem}.
\bjtitle{Proceedings of the Indian Academy of Sciences: Mathematical Sciences}
\bvolume{135},
\bfpage{35}
(\byear{2025})
\doiurl{10.1007/s12044-025-00844-3}
\end{barticle}
\endbibitem

\bibitem[\protect\citeauthoryear{Li et~al.}{2022}]{LiWangLiu2022}
\begin{barticle}
\bauthor{\bsnm{Li}, \binits{D.-K.}},
\bauthor{\bsnm{Wang}, \binits{L.}},
\bauthor{\bsnm{Liu}, \binits{Y.-Y.}}:
\batitle{A relaxation general two-sweep modulus-based matrix splitting
  iteration method for solving linear complementarity problems}.
\bjtitle{Journal of Computational and Applied Mathematics}
\bvolume{409},
\bfpage{114140}
(\byear{2022})
\doiurl{10.1016/j.cam.2022.114140}
\end{barticle}
\endbibitem

\bibitem[\protect\citeauthoryear{Isac}{2006}]{Isac2006}
\begin{bchapter}
\bauthor{\bsnm{Isac}, \binits{G.}}:
\bctitle{Leray-schauder type alternatives. existence theorems}.
In: \bbtitle{Leray-Schauder Type Alternatives, Complementarity Problems and
  Variational Inequalities},
pp. \bfpage{137}--\blpage{223}.
\bpublisher{Springer},
\blocation{Boston, MA}
(\byear{2006}).
\doiurl{10.1007/0-387-32900-5_5}
\end{bchapter}
\endbibitem

\end{thebibliography}

\end{document}